\newtheorem{thm}{Theorem}[section]
\newtheorem{theorem}[thm]{Theorem}
\newtheorem{lem}[thm]{Lemma}
\newtheorem{prop}[thm]{Proposition}
\newtheorem{propnots}[thm]{Proposition and notation}
\theoremstyle{definition} 
\newtheorem{rem}[thm]{Remark}
\newtheorem{defn}[thm]{Definition} 
\newtheorem{definition}[thm]{Definition} 
\newtheorem{defn-lem}[thm]{Definition-Lemma}
\newtheorem{example}[thm]{Example}
\newtheorem{notation}[thm]{Notation}
\newtheorem{notations}[thm]{Notations}
\theoremstyle{remark}
\numberwithin{equation}{section}
\newcommand{\A}{\mathbb A}
\newcommand{\B}{\mathscr B}
\newcommand{\C}{C} 
\newcommand{\E}{\mathcal E} 
\newcommand{\F}{\overline{F}}
\newcommand{\I}{\mathcal I}
\newcommand{\K}{\mathbf K}
\renewcommand{\L}{\mathscr L} 
\newcommand{\M}{\mathcal M}
\newcommand{\N}{\mathcal N}
\renewcommand{\P}{\mathbb P}
\newcommand{\R}{\mathrm{R}} 
\newcommand{\X}{\overline{X}}
\newcommand{\Z}{\mathbb Z} 
\renewcommand{\AA}{\mathbb{A}}
\newcommand{\CC}{\mathbb C} 
\newcommand{\LL}{\mathbb L}
\newcommand{\NN}{\mathbb N}
\newcommand{\PP}{\mathbb{P}}
\newcommand{\RR}{\mathbb R}
\newcommand{\ZZ}{\mathbb Z}
\newcommand{\bc}{\mathbf{c}}
\newcommand{\mI}{\mathcal{I}}
\newcommand{\mP}{\mathcal{P}}
\newcommand{\mS}{\mathcal{S}}
\newcommand{\f}{\overline{f}}
\renewcommand{\i}{\mathrm{i}}
\renewcommand{\k}{\mathbf k}
\newcommand{\m}{\mathfrak m}
\newcommand{\unit}{\mathrm{u}}
\newcommand{\x}{\mathrm{x}}
\newcommand{\abs}[1]{\left\vert#1\right\vert}
\newcommand{\ac}[1]{\overline{\text{ac}}\!\left( #1 \right)}
\newcommand{\Ac}[1]{\overline{\text{ac}}\:#1}
\newcommand{\ord}[1]{\text{ord}\!\left( #1 \right)} 
\newcommand{\Ord}[1]{\text{ord}\:#1}
\newcommand{\ds}{\displaystyle}
\newcommand{\eps}{\varepsilon} 
\newcommand{\Eu}{\chi_c}
\newcommand{\fdelta}{\updelta}
\newcommand{\gen}{\mathrm{gen}}
\newcommand{\Gm}{\mathbb G_{m}} 
\renewcommand{\line}{\mathrm{l}}
\newcommand{\mes}{\text{mes}}
\newcommand{\mgg}{\mathcal M_{\mathbb G_{m}}^{\mathbb G_{m}}} 
\newcommand{\MHM}{\mathrm{MHM}}
\newcommand{\mon}{\mathrm{mon}}
\newcommand{\mot}{\mathrm{mot}}
\newcommand{\motive}{\mathrm{M}}
\newcommand{\nMilnor}{\upmu}
\newcommand{\Newton}{\mathrm{Newton}}
\newcommand{\rs}{\mathrm{rs}}
\newcommand{\ra}{\rightarrow}
\newcommand{\racineunite}{\boldsymbol{\mu}}
\newcommand{\setI}{\mathrm{I}}
\newcommand{\Spec}{\text{Spec}\:} 
\newcommand{\suml}{\sum\limits}
\newcommand{\Supp}{\text{Supp}}
\renewcommand{\top}{\mathrm{top}}
\newcommand{\tsigma}{\boldsymbol{\upsigma}}
\newcommand{\Var}{\text{Var}}
\newcommand{\vertex}{\mathrm{v}}
\newcommand{\vertexw}{\mathrm{w}}
\newcommand\mg{{\mu}}
\definecolor{vert}{rgb}{0,0.6,0.2}
\DeclareRobustCommand{\@pointrait}{.}
\title[Local motivic invariants of rational functions in two variables]{Local motivic invariants of rational functions in two variables}
\author{Pierrette Cassou-Noguès}
\address{Institut de Math\'ematiques de Bordeaux, UMR 5251, Universit\'e de Bordeaux, 350, Cours de la Lib\'eration, 33405, Talence Cedex, France}
\email{Pierrette.Cassou-Nogues@math.u-bordeaux.fr}
\author{Michel Raibaut} 
\address{Laboratoire de Math\'ematiques, UMR 5127, Universit\'e Savoie Mont-Blanc, B\^atiment Chablais, Campus Scientifique, 
Le Bourget du Lac, 73376 Cedex, France} 
\email{Michel.Raibaut@univ-smb.fr}
\urladdr{https://raibautm.perso.math.cnrs.fr/}
\begin{document} 
\begin{abstract}
	Let $P$ and $Q$ be two polynomials in two variables with coefficients in an algebraic closed field of characteristic zero. We consider the rational function $f=P/Q$. For an indeterminacy point $\x$ of $f$ and a value $c$, we compute the motivic Milnor fiber $S_{f,\x, c}$ in terms of some motives associated to the faces of the Newton polygons appearing in the Newton algorithms of $P-cQ$ and $Q$ at $\x$, without any condition of non-degeneracy or convenience. In the complex setting, assuming for any $(a,b)\in \CC^2$ that $\x$ is a smooth or an isolated critical point of $aP+bQ$, and the curves $P=0$ and $Q=0$ do not have common irreducible component, we prove that the topological bifurcation set $\B_{f,\x}^{\top}$ is equal to the motivic bifurcation set $\B_{f,\x}^{\mot}$ and they are computed from the Newton algorithm.
\end{abstract}
\subjclass{14E18, 14B05, 14B07, 14H20, 32S15, 32S30, 32S50, 32S55} 
\keywords{algebraic geometry, singularity theory, motivic integration, motivic Milnor fibers}
\maketitle{} 
\date{\today,\:\currenttime}
\section*{Introduction} 

Let $P$ and $Q$ be two polynomials in $\CC[x_{1},\dots,x_{d}]$ with $d \geq 1$. We denote by $I$ the common zero set of $P$ and $Q$ and by $f$ the rational function $P/Q$ well defined on $\AA_{\CC}^{d} \setminus I$ to $\PP^{1}_{\CC}$. 
For an indeterminacy point $\x \in I$ and a value $c$ in $\PP^{1}_{\CC}$, Gusein-Zade, Luengo and Melle-Hern\'andez in \cite{GusLueMel98a,GusLueMel01a}, proved that $f$ is a Milnor $C^{\infty}$ locally trivial fibration on some $B(\x,\eps)\setminus I$ for $\eps$ small enough, over a punctured neighborhood of $c$. 
They defined a \emph{Milnor fiber of $f$ at $\x$ for the value $c$}, denoted by $F_{\x,c}$, endowed with a monodromy action $T_{\x,c}$ induced by the fibration. 
If the fibration can be extended as a trivial fibration over a neighborhood of $c$, then $c$ is called \emph{typical value}, and \emph{atypical value} otherwise. They proved that the set of atypical values is finite and called it, the \emph{topological bifurcation set} $\B_{f,\x}^{\top}$ of the germ of $f$ at $\x$. 
Later, several authors (for instance \cite{SieTiba,Tiba}, \cite{BodPic07a,BodPicSea09a}, \cite{GL}, \cite{VZG}, \cite{Nguyen}, \cite{NST}) studied singularities of meromorphic functions, locally or at infinity. This is related to the study of pencils $aP+bQ=0$ (for instance \cite{TraWeb97a}, \cite{Par99a}, \cite{SieTiba}, \cite{Delgado-Maugendre}).

Working over a characteristic zero field $\k$, with $P$ and $Q$ polynomials with coefficients in $\k$, using the motivic integration theory, introduced by Kontsevich in \cite{Kon95a}, and more precisely constructions of Denef--Loeser in \cite{DenLoe98b, DenLoe99a,DenLoe02a} and  Guibert--Loeser--Merle in \cite{GuiLoeMer06a}, the second author in \cite{Raibaut-fractions} and Nguyen--Takeuchi in \cite{Nguyen-Takeuchi} (see also \cite{GVLM} and \cite{VZG})
defined a \emph{motivic Milnor fiber} of $f$ at $\x$ and a value $c$, denoted by $S_{f,\x,c}$ (section \ref{section-Sfxc}). It is an element of 
$\mathcal M_{\Gm}^{\Gm}$, a modified Grothendieck ring of varieties over $\k$ endowed with an action of the multiplicative group $\Gm$ of $\k$. 
When $\k$ is the field of complex numbers, it follows from Denef--Loeser results that the motive $S_{f,\x,c}$ is a ``motivic'' incarnation of the topological Milnor fiber $F_{\x,c}$ endowed with its monodromy action $T_{\x,c}$. 
For instance, the motive $S_{f,\x,c}$ realizes on the Euler characteristic of $F_{\x,c}$ and the classes in appropriate Grothendieck rings of the associated mixed Hodge modules and limit mixed Hodge structures (Theorem \ref{realisationMHMlocal}).
In \cite{Raibaut-fractions}, the second author proved that the set of values $c$ such that $S_{f,\x,c}\neq 0$ is a finite set, 
denoted by $\B_{f,\x}^{\mot}$ and called \emph{motivic bifurcation set} (Definition \ref{Bfmot}).

In this article, similarly to \cite{carai-antonio, carai-infini}, assuming $\k$ algebraically closed, we investigate the case $d=2$ in full generality (namely without any assumptions of convenience or non degeneracy w.r.t any Newton polygon) using ideas of Guibert in \cite{Gui02a}, Guibert--Loeser--Merle in \cite{GuiLoeMer05a, GuiLoeMer09b}, and the works of the first author and Veys in the case of an ideal of $\k[[x,y]]$ in \cite{CassouVeys13, Cassou-Nogues-Veys-15}. 

More precisely, in Theorem \ref{thmSf00c}, we give an inductive expression of the motive $S_{f,\x,c}$, in terms of some motives associated to faces of the Newton polygons appearing in the Newton algorithm of $P-cQ$ and $Q$ recalled in section \ref{section1}. 
Furthermore, in Theorem \ref{thmBfNewtonBftop} and Theorem \ref{thmegaliteBf}, in the smooth case or isolated singularity case, assuming that the curves $P=0$ and $Q=0$ do not have a common irreducible component, we have the equality 
\(\B_{f,\x}^{\top}=\B_{f,\x}^{\Newton}=\B_{f,\x}^{\mot}\)
where $\B_{f,\x}^{\Newton}$ is the Newton bifurcation set of $f$ which is defined in an algorithmic way (Definition \ref{Newtongen}) using the Newton algorithm of $P-\bc Q$ and $Q$ with $\bc$ an indeterminate. Up to factorisation of polynomials, this gives in particular an algorithm to compute the usual topological bifurcation set $\B_{f,\x}^{\top}$ without assumptions of convenience or non degeneracy toward any Newton polygon.

\section{Newton algorithm} \label{section1} \label{section:Newton-algorithm-local}
Let $\NN$ be the set of non-negative integers. Let $\k$ be an algebraically closed field of characteristic zero, with $\Gm$ its multiplicative group. Recall first standard notation used throughout this article.

\begin{defn} \label{defDeltaE} \label{def:Newton-polygon-height}
	For any $E \subset \NN^2$, we denote by $\Delta (E)$ the smallest convex set containing
	\[E+\left(\RR_{\geq 0}\right)^2=
		\left\{\begin{array}{c|c} \rm{v}+\rm{w} & \text{v}\in E, \rm{w} \in \left(\RR_{\geq 0}\right)^2 \end{array}\right\}.
	\] 
	A subset $\Delta \subset \left( \RR_{\geq 0} \right)^2$ is called \emph{Newton diagram} if $\Delta = \Delta(E)$ for some subset $E \subset \NN^2$. The smallest set $E_0$ of $\NN^2$ such that 
	$\Delta=\Delta (E_0)$ is called \emph{set of vertices} of $\Delta$, it is a finite set. 

	Let $\Delta\subset \left( \RR_{\geq 0} \right)^2$ be a Newton diagram and $E_0 = \{\vertex_0, \dots, \vertex_d\}$ be its set of vertices with for any $i \in [\![0,d]\!]:= \{0,\dots,d\}$, $\vertex_i = (a_i,b_i) \in \NN^2$ . By convexity, we order $E_0$ such that for any $i \in [\![1,d]\!]$, we have $a_{i-1}<a_i$ and $b_{i-1}>b_i$ and we write $v_{i}>v_{i-1}$. For such $i$, we denote by $S_i$ the segment $[\vertex_{i-1},\vertex_i]$ and by $\line_{S_i}$ the line supporting $S_i$. 
	We define the \emph{Newton polygon} of $\Delta$ as
	\[\N(\Delta)=\{S_i\}_{i \in [\![1,d]\!]} {\cup \{\vertex_i\}_{i \in [\![0,d]\!]}},\] 
	the \emph{height} of $\Delta$ as the integer $h(\Delta)=b _0-b_d$, the \emph{one dimensional faces} of $\N(\Delta)$ as the segments $S_i$, the \emph{zero dimensional faces} of $\N(\Delta)$ as the vertices $\vertex_i$ and among them the \emph{vertical face} 
	$\gamma_v$ as $\vertex_0$ and the \emph{horizontal face} $\gamma_h$ as $\vertex_d$.
\end{defn}

\begin{defn}
	\label{defcoeffsuppport} \label{defNewtondiagram} \label{defDelta'}\label{def:Newton-polygon-origin}
	\label{def:facepolynomial} \label{def:zero-partie-initiale}\label{def:roots}\label{def:smooth}
	Let $P= \sum c_{a,b}x^{a}y^{b}$ in $\k[x,y]$.	
	The \emph{support} of $P$, is by definition the set 
	\[\Supp(P)=\left\{ \begin{array}{c|c} (a,b)\in \NN^2 & c_{a,b} \neq 0 \end{array}\right\}.\]
	Sometimes, we will denote by $c_{P}(a,b)$ the coefficient $c_{a,b}$ of $P$.
	We define the Newton diagrams 
	\[\Delta(P):=\Delta(\Supp(P)), \Delta'(P) := \Delta\left( \Supp(P) \setminus (\NN \times \{0\}) \right).\]	
	the \emph{Newton polygon at the origin} $\N(P):=\N(\Delta(P))$ and $h(P):=h(\Delta(P))$ the height of $P$.

	To any face $\gamma$ of $\N(P)$, we associate a \emph{face polynomial} $P_{\gamma}$ with a set of roots $\R_{\gamma}$:
	\begin{itemize}
		\item If $\dim{\gamma}=0$, then $\gamma$ is a point $(a,b)$, $P_{\gamma}:=c_{(a,b)}x^{a}y^{b}$ and $\R_\gamma = \emptyset$.
		\item If $\dim{\gamma}=1$, then $\gamma$ is supported by a line $\line$ and we define 
			\[P_\gamma:=\sum_{(a,b)\in \gamma}c_{(a,b)}x^{a}y^{b}.\]
			As the field $\k$ is algebraically closed, there exists 
			$c \in \k$, $(a_\gamma,b_\gamma) \in \NN^2$,  
			$(p,q)\in \NN^2$ and coprime, there exists $r \in \NN_{>0}$, 
			$R_{\gamma}=\{\mu_i \in \Gm \mid i \in [\![1;r]\!]\}$ called \emph{roots} and all different with \emph{multiplicities} $(\nu_i) \in \left(\NN_{>0}\right)^r$, such that 
		        \[P_\gamma = cx^{a_\gamma}y^{b_\gamma}\prod _{1\leq i\leq r }(y^p-\mg_ix^q)^{\nu_i}.\]
			Following \cite{Kou76a}, $P$ is said \emph{non degenerate} with respect to its Newton polygon $\N(P)$, if for each one dimensional face $\gamma$ in $\N(P)$, the face polynomial $P_\gamma$ has no critical points on the torus $\Gm^2$, in particular all its roots have multiplicity one.
		\item   A face $\gamma \in \N(P)$ is said to be \emph{smooth} if there are points $\vertex,\vertexw \in \gamma$ such that $c_{\vertex} \neq 0$, $c_{\vertexw}\neq 0$, and one of the following conditions is satisfied:
			\begin{itemize}
				\item $\vertex=(x_{\vertex},1)$ and $\vertexw=(x_{\vertexw},0)$, with $x_{\vertex}<x_{\vertexw}$, $\gamma$ would be called \emph{$y$-smooth},
				\item $\vertexw=(0,y_{\vertexw})$ and $\vertex=(1,y_{\vertex})$,with $y_{\vertex}<y_{\vertexw}$ $\gamma$ would be called \emph{$x$-smooth}.
			\end{itemize}
	\end{itemize}
\end{defn}

\begin{defn}
	Let $\setI$ be a finite set. A \emph{rational polyhedral convex cone} of $\RR^{\abs{\setI}}\setminus \{0\}$
	is a convex part of $\RR^{\abs{\setI}}\setminus \{0\}$ defined by a finite number of linear
	inequalities with integer coefficients of type $\ell_1\leq 0$ or $\ell_2>0$ and stable
	under multiplication by elements of $\RR_{>0}$.
\end{defn}

In the well-known following proposition, we introduce notation used throughout this article.

\begin{propnots} \label{prop:dualfan-Newton-local}
	Let $E \subset \NN^2$, $(p,q) \in \NN^2$ with $\gcd(p,q)=1$ and 
		\[\ell_{(p,q)}\colon(a,b) \in \RR^2 \mapsto ap+bq.\]
	\begin{enumerate}
		\item The minimum of the restriction $\ell_{(p,q)\mid \Delta(E)}$, denoted by $\m(p,q)$, is reached on a face denoted by $\gamma(p,q)$ of $\Delta(E)$.
			Furthermore, the linear map $\ell_{(p,q)}$ is constant on the face $\gamma(p,q)$.
			The function $\m$ defined on $\NN^2$ is called \emph{support function} of $\Delta(E)$.
			If $\Delta(E)=\Delta(P)$ for some polynomial $P$, we say that $\m$ is the support function of $\N(P)$.	
		\item For any face $\gamma$ of $\Delta(E)$, we denote by $C_\gamma$ the interior in its own generated vector space in $\RR^2$, of the positive cone generated by the set $\{(p,q) \in \NN^2 \mid \gamma(p,q) = \gamma\}$. This set is called \emph{dual cone} to the face $\gamma$ and is a relatively open rational polyhedral convex cone of $(\RR_{\geq 0})^{2}$.
	\end{enumerate}
	For a one dimensional face $\gamma$, we denote by $\vec{n}_\gamma$ the normal vector to the face $\gamma$ with integral non negative coordinates and the smallest norm. With these notation we have the following properties:
	\begin{enumerate}[resume] 	
		\item For any one dimensional face $\gamma$ of $\Delta(E)$, $C_\gamma = \RR_{>0} \vec{n}_\gamma$.
		\item Any zero dimensional face $\gamma$ of $\Delta(E)$ is an intersection of two one dimensional faces $\gamma_1$ and $\gamma_2$ of $\Delta(E)$ (may be not compact), and $C_\gamma = \RR_{>0}\vec{n}_{\gamma_1} + \RR_{>0} \vec{n}_{\gamma_2}$.
		\item The set of dual cones $\E_{\Delta}=\{C_\gamma \mid \gamma \in \N(\Delta(E))\}$ is a fan\footnote{We refer for instance to \cite[\S 1.4]{Fulton-toric} for the definition of a fan.} of $(\RR_{\geq 0})^{2}$, called \emph{dual fan} of $\Delta(E)$.
	\end{enumerate}
\end{propnots}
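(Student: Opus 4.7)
The plan is to reduce every assertion to finitely many linear evaluations at the vertices of $\Delta(E)$. Since $\Delta(E) = \mathrm{Conv}(E_0) + (\RR_{\geq 0})^2$ and $(p,q) \in \NN^2$ forces $\ell_{(p,q)}$ to be non-negative on $(\RR_{\geq 0})^2$, the infimum of $\ell_{(p,q)}$ on $\Delta(E)$ equals its minimum on the compact polytope $\mathrm{Conv}(E_0)$. By linearity this minimum is attained on the convex hull of the subset of $E_0$ that realises it, which is either a single vertex or a segment $[\vertex_{i-1},\vertex_i]$, hence a face of $\N(\Delta(E))$. This gives (1), and $\ell_{(p,q)}$ is automatically affine constant on that face.

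For (2) and (3), I fix a one dimensional face $\gamma = S_i$. The equality $\ell_{(p,q)}(\vertex_{i-1}) = \ell_{(p,q)}(\vertex_i)$ amounts to $(p,q)\cdot(\vertex_i - \vertex_{i-1}) = 0$, so the set of $(p,q)\in\NN^2$ with $\gamma(p,q) = \gamma$ lies on the ray spanned by $\vec n_\gamma$; checking the strict inequalities against the other vertices (which hold automatically by the ordering of the $\vertex_j$ and convexity) shows this ray is exactly $\RR_{>0}\vec n_\gamma$. This is clearly a rational polyhedral convex cone, relatively open in the line it generates, establishing (3) and the one-dimensional case of (2).

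For (4) I distinguish interior vertices from the extremal ones. For an interior vertex $\vertex_i$ with $0 < i < d$, $\gamma(p,q) = \vertex_i$ is equivalent to the strict inequalities $\ell_{(p,q)}(\vertex_i) < \ell_{(p,q)}(\vertex_{i\pm 1})$; each cuts out an open half-plane bounded by the line $\RR\vec n_{S_i}$ or $\RR\vec n_{S_{i+1}}$, and their intersection inside the first quadrant is the relatively open two dimensional cone $\RR_{>0}\vec n_{S_i} + \RR_{>0}\vec n_{S_{i+1}}$. For the extremal vertex $\vertex_0$ (vertical face) one of the two bounding edges is the non-compact vertical ray $\{a_0\}\times [b_0,\infty)$ with normal vector $(1,0)$; symmetrically for $\vertex_d$ with horizontal normal $(0,1)$. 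Running the same strict-inequality argument with these ``edges at infinity'' in place of a missing $S_0$ or $S_{d+1}$ yields the desired description. This boundary bookkeeping is the subtlest step and the precise reason (4) allows $\gamma_1,\gamma_2$ to be possibly non-compact.

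Finally for (5), every $(p,q) \in (\RR_{\geq 0})^2\setminus\{0\}$ lies in the unique cone $C_{\gamma(p,q)}$ associated to its minimising face, so the relatively open cones $C_\gamma$ partition $(\RR_{\geq 0})^2\setminus\{0\}$. Taking closures and combining the explicit descriptions from (3) and (4), each face of $\overline{C_\gamma}$ is the closure of some $C_{\gamma'}$ with $\gamma'$ incident to $\gamma$ in $\N(\Delta(E))$, and two such closures meet precisely along such a common face. These are the defining axioms of a fan in $(\RR_{\geq 0})^2$, so $\E_\Delta$ is indeed the dual fan. The only non-routine step in the whole argument remains the treatment of the two non-compact boundary rays of $\Delta(E)$, which governs the extremal cones attached to $\gamma_v$ and $\gamma_h$.
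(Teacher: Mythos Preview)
Your argument is correct. Note, however, that the paper does not supply a proof of this statement at all: it is introduced as a ``well-known'' proposition whose purpose is to fix notation, with only a footnote pointing to Fulton's book for the definition of a fan. So there is no proof in the paper to compare against; you have simply written out the standard verification that the paper chose to omit. Your handling of the two extremal vertices via the non-compact boundary rays (with normals $(1,0)$ and $(0,1)$) is exactly what the parenthetical ``may be not compact'' in item (4) is gesturing at, and the rest is routine convex geometry.
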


\begin{defn}\label{defEc}
	Let $(P,Q)\in \k[x,y]^2$. We assume $(P,Q)$ not in $\k[x]^2$ or $\k[y]^2$. Let $c \in \k$. 
	We denote by $\E_c$ the fan of $(\RR_{\geq 0})^2$ induced by the Minkowski sum $\N(P-cQ)+\N(Q)$. It is a well-known fact (\cite[Proposition 6.2.13]{Cox}), that this fan is the coarsest common refinement of the dual fans of $\N(P-cQ)$ and $\N(Q)$. For any cone $C$ of $\E_c$, there is a face $\gamma_{P-cQ}(C)$ in $\N(P-cQ)$ and a face $\gamma_Q(C)$ in $\N(Q)$, such that $C$ is the intersection of the dual cones 
	$C_{\gamma(P-cQ)}$ and $C_{\gamma(Q)}$, namely for any $(\alpha,\beta) \in C$, the associated faces to $(\alpha,\beta)$ in 
	$\N(P-cQ)$ and $\N(Q)$ are $\gamma(P-cQ)$ and $\gamma(Q)$. If $\dim{C}=2$ then $\gamma_{P-cQ}(C)$ and $\gamma_Q(C)$ are zero dimensional, if $\dim{C}=1$ then $\gamma_{P-cQ}(C)$ or $\gamma_Q(C)$ is one dimensional.
	We extend these definition to $c=\infty$, considering the rational function $Q/P$ and defining $\E_\infty$ as the fan $\E_0$ for $Q/P$.
\end{defn}

We recall the Newton algorithm and refer to \cite{Wall,ArtCasLue05a, CassouVeys13, Cassou-Nogues-Veys-15, carai-antonio} for more details.

\begin{defn}\label{defn:Newton-map-local} 
	Let $(p,q)\in\NN^2$ and $(p',q')\in\NN^2$ such that $pp'-qq'=1$. Let $\mu\in \Gm$.
	We define the \emph{Newton transformation} associated to $(p,q,\mu)$ as the application
	\begin{equation} \label{formule:Newtontransform} 
		\begin{matrix} 
			&\tsigma_{(p,q,\mu)}&\colon &\k[x,y] &\to& \k[x_1,y_1]\\ 
			&&&P(x,y)&\mapsto &P(\mu^{q'}x_1^p, x_1^q(y_1+\mu ^{p'}))\\ 
		\end{matrix}.
	\end{equation}
	We call $\tsigma_{(p,q,\mu)}(P)$ the \emph{Newton transform} of $P$ and denote it by $P_{\tsigma_{(p,q,\mu)}}$ or simply $P_{\tsigma}$. More generally, let $\Sigma _n=(\tsigma _1,\dots, \tsigma _n)$ be a finite sequence of Newton maps $\tsigma_i$, we define the \emph{composition} $P_{\Sigma _n}$ by induction: 
	$P_{\Sigma _1}=P_{\tsigma_1}$, and for any $i$, $ P_{\Sigma _i}=(P_{\Sigma _{i-1}})_{\tsigma _i}$.
\end{defn}

\begin{lem}[{\cite[Lemma 2]{carai-antonio}}]\label{lem:Newton-alg} 
	Let $(p,q)\in \NN^2$ with $\gcd(p,q)=1$. 
	Let $\mu\in \Gm$.
	Let $P\in \k[x,y]$, $P\neq 0$, ${P_1=\tsigma_{(p,q,\mu)}(P)\in \k[x_1,y_1]}$ be its Newton transform  and $\m$ be the support function of $\N(P)$. 
	\begin{enumerate}
		\item If there does not exist a one dimensional face $\gamma$ of $\mathcal{N}(P)$ whose
			supporting line has equation $pa +qb=N$, for some $N$,
			then there is a polynomial $\unit(x_1,y_1)\in \k[x_1,y_1]$ with $\unit(0,0)\neq 0$ such that
			$P_1(x_1,y_1)=x_1^{\m(p,q)} \unit(x_1,y_1)$.
		\item If there exists a one dimensional face $\gamma$ of $\mathcal{N}(P)$ with supporting line
			of equation {$pa +qb = N$}, if $\mu$ is not a root of $P_\gamma$, then $\m(p,q)=N$ and there is a polynomial $\unit(x_1,y_1) \in \k[x_1,y_1]$ with $\unit(0,0)\neq 0$ such that $P_1(x_1,y_1)=x_1^{N} \unit(x_1,y_1)$.
		\item If there exists a one dimensional face $\gamma$ of $\mathcal{N}(P)$ with supporting line of equation {$pa +qb = N$}, if $\mu$ is a root of $P_\gamma$ of multiplicity $\nu$ then $\m(p,q)=N$ and there is $Q_1(x_1,y_1) \in \k[x_1,y_1]$ with  
		$Q_1(0,0)=0$ and $Q_1(0,y_1)$ of valuation $\nu$, such that $P_1(x_1,y_1)=x_1^{N} Q_1(x_1,y_1).$
		In that case we have in particular the inequality $h(P)\geq \nu \geq h(P_1)$.
	\end{enumerate} 
\end{lem}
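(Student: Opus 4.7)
The plan is to substitute the Newton transformation directly into $P(x,y)=\sum c_{a,b}x^ay^b$. Each monomial becomes $c_{a,b}\mu^{q'a} x_1^{pa+qb}(y_1+\mu^{p'})^b$, so the exponent of $x_1$ equals $\ell_{(p,q)}(a,b)$, and its minimum $\m(p,q)$ is realized exactly on $\gamma(p,q)\cap\Supp(P)$. Grouping on-face and off-face contributions yields
\[P_1=x_1^{\m(p,q)}\Bigl[H(y_1)+x_1\,R(x_1,y_1)\Bigr],\qquad H(y_1)=\sum_{(a,b)\in\gamma(p,q)\cap\Supp(P)}c_{a,b}\mu^{q'a}(y_1+\mu^{p'})^b.\]
By weighted homogeneity of $P_{\gamma(p,q)}$ with weights $(p,q)$, one has $H(y_1)=P_{\gamma(p,q)}(\mu^{q'},y_1+\mu^{p'})$ when $\gamma(p,q)$ is one-dimensional and $H(y_1)=c_{(a_0,b_0)}\mu^{q'a_0}(y_1+\mu^{p'})^{b_0}$ when $\gamma(p,q)$ reduces to a vertex $(a_0,b_0)$. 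Assertion 1 is then immediate: $\gamma(p,q)$ is a vertex and $\unit(0,0)=c_{(a_0,b_0)}\mu^{q'a_0+p'b_0}\neq 0$.

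For Assertions 2 and 3, $\gamma$ lies on $pa+qb=N$, hence $\m(p,q)=N$, and I exploit the factorisation $P_\gamma=c\,x^{a_\gamma}y^{b_\gamma}\prod_i(y^p-\mu_i x^q)^{\nu_i}$. The key identity $pp'-qq'=1$ gives $\mu^{pp'}-\mu_i\mu^{qq'}=\mu^{qq'}(\mu-\mu_i)$, so
\[H(0)=P_\gamma(\mu^{q'},\mu^{p'})=c\,\mu^{q'a_\gamma+p'b_\gamma+qq'\sum_i\nu_i}\prod_i(\mu-\mu_i)^{\nu_i},\]
which is nonzero precisely when $\mu$ is not a root of $P_\gamma$; this settles Assertion 2. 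In Assertion 3, $\mu=\mu_j$ with multiplicity $\nu$, and I compute the valuation of $H(y_1)=P_\gamma(\mu^{q'},y_1+\mu^{p'})$ at $y_1=0$ factor by factor: $(y_1+\mu^{p'})^{b_\gamma}$ and the factors with $i\neq j$ are nonzero at $y_1=0$, while $(y_1+\mu^{p'})^p-\mu\mu^{qq'}$ has a simple zero there because its $y_1$-derivative is $p(y_1+\mu^{p'})^{p-1}\neq 0$. The $\nu$-th power of this factor therefore contributes valuation exactly $\nu$, giving $Q_1(0,y_1)$ of valuation $\nu$ and $Q_1(0,0)=0$.

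Finally, for the height inequalities in Assertion 3, writing the endpoints of $\gamma$ as $(a_{i-1},b_{i-1})$ and $(a_i,b_i)$, the product form of $P_\gamma$ forces $b_{i-1}-b_i=p\sum_k\nu_k$, so $\nu\leq\sum_k\nu_k\leq b_{i-1}-b_i\leq h(P)$. Conversely, the valuation computation makes $(0,\nu)$ the topmost vertex of $\N(Q_1)$, so $h(P_1)=h(Q_1)\leq\nu$. The step I expect to be most delicate is the factor-by-factor valuation calculation, where one must combine the explicit product form of $P_\gamma$ with the identity $pp'-qq'=1$ to ensure the zero coming from the $j$-th factor is truly of order $\nu$ and not higher; the remaining steps are essentially bookkeeping.
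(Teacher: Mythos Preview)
Your argument is correct. The paper does not supply a proof of this lemma; it is quoted verbatim from \cite[Lemma~2]{carai-antonio}, so there is no in-paper proof to compare against. Your direct substitution of the Newton map into $P$, the identification $H(y_1)=P_{\gamma(p,q)}(\mu^{q'},y_1+\mu^{p'})$, and the use of the Bézout identity $pp'-qq'=1$ to compute $H(0)$ and the $y_1$-valuation of $H$ constitute exactly the standard proof of this classical fact. Two minor points worth making explicit: first, the claim that the factor $(y_1+\mu^{p'})^p-\mu\mu^{qq'}$ has a \emph{simple} zero uses $p\geq 1$, which holds because one-dimensional faces of $\N(P)$ have primitive normals in $(\NN_{>0})^2$; second, for $h(P_1)\leq\nu$ you should note that $P_1=x_1^{N}Q_1$ gives $h(P_1)=h(Q_1)$, and that $(0,\nu)$ is the vertex $v_0$ of $\N(Q_1)$ because it is the point of $\Supp(Q_1)$ with minimal $x_1$-coordinate and, among those, minimal $y_1$-coordinate. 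With these clarifications the proof is complete.
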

\begin{rem}
	If $P_1(x_1,y_1)=x_1^{n_1}y_1^{m_1}\unit(x_1,y_1)$, where
	$(n_1,m_1)\in \NN^2$ and $\unit\in \k[x_1,y_1]$ is a unit in $\k[[x_1,y_1]]$, we say for
	short that $P_1$ is a \emph{monomial times a unit}. From this lemma, we see that there is a finite number of triples $(p,q,\mu)$ such that $\tsigma _{(p,q,\mu)}(P)$ is
	eventually not a monomial times a unit in $\k[[x_1,y_1]]$. These triples are given by the equations of the faces of the Newton polygon and the roots of the
	corresponding face polynomials.
\end{rem}

\begin{defn}\label{def:algo-Newton}
	The \emph{Newton algorithm} of a polynomial $P\in \k[x,y]$ is defined by induction (see for instance \cite[Theorem 1, Lemma 4]{carai-antonio}).
	It starts by applying Newton transformations given by the equations of the faces of the Newton polygon $\N(P)$ and the roots of the corresponding face polynomials. 
	Then, this process is applied on each Newton transform until a \emph{base case} of the form $\unit(x,y)x^{M}y^m$ or 
	$\unit(x,y)x^{M}(y-\mu x^q+R(x,y))^{m}$ is obtained with $\unit(x,y)\in \k[[x,y]]$ with $\unit(0,0)\neq 0$, $\mu\in \Gm$, $(M,m)\in \NN^2$, $q\in \NN$ and $R(x,y)=\sum_{a+bq>q} c_{a,b}x^ay^b \in \k[x,y]$. The output of the algorithm is the tree of the Newton transform polynomials produced (see for instance \cite{CassouVeys13}).
\end{defn}

\section{Motivic setting} \label{sectionmotivicsetting}

We will freely use motivic integration in this article and we refer to \cite{DenLoe01b, Vey06a, Loo02a, CNS} for details.

\subsection{Grothendieck rings} \label{rappelK0} \label{inverse-direct}
We denote by $\Var_{\k}$ the category of \emph{$\k$-varieties} (separated reduced schemes of finite type over $\k$) and by $\M$ the localization of its Grothendieck ring with respect to the class $[\AA^1_{\k}]$. We use the $\Gm$-equivariant variant $\M_{\Gm}^{\Gm}$ (\cite[\S 2]{GuiLoeMer06a} or \cite[\S 2]{GuiLoeMer05a}), generated by isomorphism classes of objects, $X\to \Gm$ endowed with a monomial $\Gm$-action $\sigma$, of the category $\Var_{\Gm}^{\Gm}$. The class of the projection $\AA^1_{\k} \times \Gm \to \Gm$ endowed with the trivial action is denoted by $\LL$. For a variety $(X\to \Gm, \sigma)\in \Var_{\Gm}^{\Gm}$, its fiber in 1 is denoted by $X^{(1)}$ and endowed with an induced action $\sigma^{(1)}$ of the group of roots of unity $\hat{\racineunite}$. The corresponding Grothendieck ring to this operation is denoted by $\M^{\hat{\racineunite}}$ and isomorphic to $\M_{\Gm}^{\Gm}$ (see \cite[Proposition 2.6]{GuiLoeMer06a}).

\subsection{Rational series} 
Let $A$ be one of the rings $\ZZ[\LL, \LL^{-1}]$, 
$\ZZ[\LL, \LL^{-1}, (1/(1-\LL^{-i}))_{i>0}]$ and $\M_{\Gm}^{\Gm}$. 
We denote by $A[[T]]_{\rs}$ the $A$-submodule of $A[[T]]$ generated by 1 and finite products
of terms $p_{e,i}(T)=\LL^{e}T^{i}/(1-\LL^{e}T^{i})$ with $e \in \ZZ$ and $i\in \NN^*$. 
There is a unique $A$-linear morphism
\({\lim_{T \ra \infty}\colon A[[T]]_{\rs}\to A}\)
such that for any subset
$(e_{i},j_{i})_{i\in \setI}$ of $\ZZ \times \NN^*$ with $\setI$ finite $\lim_{T \ra \infty}\prod_{i\in \setI} p_{e_{i},j_{i}}(T)=(-1)^{\abs{\setI}}$. 

We recall the following well known lemma 
(\cite[Lemma 1]{carai-antonio}, \cite[Lemma 2.1]{carai-infini}, \cite[Lemme 2.1.5]{Gui02a} and \cite[\S 2.9]{GuiLoeMer06a}).
Its proof is based on summation of geometric series.

\begin{lem}\label{lemmerationalitedescones} \label{lemmedescones}
	Let $\phi$ and $\eta$ be two $\ZZ$-linear forms defined on $\ZZ^2$.
	Let $C$ be a rational polyhedral convex cone of $\RR^2\setminus\{(0,0)\}$, such that $\phi(C)$ and $\eta(C)$ are contained in 
	$\NN$. We assume that for any $n\geq 1$, the set $C_n:=\phi^{-1}(n)\cap C \cap \ZZ^2$ is finite. 
	We consider the formal series
	\[S_{\phi, \eta, C}(T) = \sum_{n\geq 1} \sum_{(k,l)\in C_n} \LL^{-\eta(k,l)} T^n \in \ZZ\!\left[ \LL, \LL^{-1} \right]\!\![[T]].\]
	\begin{enumerate}
		\item \label{cone1} If $C=\RR_{>0} \vec{\omega}_1 + \RR_{>0} \vec{\omega}_2$, where $\vec{\omega}_1$ and $\vec{\omega}_2$ are two non colinear primitive vectors in $\NN^2$ with $\phi(\vec{\omega}_1)>0$ and $\phi(\vec{\omega}_2)> 0$ then, 
		denoting $\mP =( ]0,1]\vec{\omega}_1 + ]0,1]\vec{\omega}_2 )\cap \ZZ^2$, we have
		\begin{equation} \label{casconedim2}
			S_{\phi,\eta,C}(T)=
			\sum_{(k_0,l_0) \in \mP}
			\frac{\LL^{-\eta(k_0,l_0)}T^{\phi(k_0,l_0)}}
			{(1-\LL^{-\eta(\omega_1)}T^{\phi(\omega_1)})(1-\LL^{-\eta(\omega_2)}T^{\phi(\omega_2)})},\:
			\lim_{T \ra \infty} S_{\phi,\eta,C}(T)=1 = \Eu(C),
		\end{equation}
		 where $\Eu$ is the Euler characteristic with compact support morphism.

	 \item \label{cone2} If $C=\RR_{>0}\vec{\omega}$, where $\vec{\omega}$ is a primitive vector in $\ZZ^2$ with 
		$\phi(\vec{\omega})> 0$, then we have
		\begin{equation} \label{casconedim1} 
			S_{\phi,\eta,C}(T)= \frac{\LL^{-\eta(\omega)}T^{\phi(\vec{\omega})}}{1-\LL^{-\eta(\vec{\omega})}T^{\phi(\vec{\omega})}}
			\:\:\text{and}\:\:\lim_{T \ra \infty} S_{\phi,\eta,C}(T)=-1= \chi_{c}(C).
		\end{equation}
	\item \label{cone3} If $C=\RR_{\geq 0} \vec{\omega}_1+ \RR_{>0} \vec{\omega}_2$
		where $\vec{\omega}_1$ and $\vec{\omega}_2$ are two non colinear primitive vectors in $\NN^2$ with $\phi(\vec{\omega}_1)>0$ and $\phi(\vec{\omega}_2)>0$ then, denoting
		$\mP =( ]0,1]\vec{\omega}_1 + ]0,1]\vec{\omega}_2 )\cap \ZZ^2$, we have
		\begin{equation} \label{casconedim2-bis}
			S_{\phi,\eta,C}(T)=
			\sum_{(k_0,l_0) \in \mathcal P}
			\frac{\LL^{-\eta(k_0,l_0)}T^{\phi(k_0,l_0)}}
			{(1-\LL^{-\eta(\vec{\omega}_1)}T^{\phi(\vec{\omega}_1)})(1-\LL^{-\eta(\vec{\omega}_2)}T^{\phi(\vec{\omega}_2)})} 
			+ \frac{\LL^{-\eta(\vec{\omega}_2)}T^{\phi(\vec{\omega}_2)}}{1-\LL^{-\eta(\vec{\omega}_2)}T^{\phi(\vec{\omega}_2)}}
		\end{equation}
		and $\ds \lim_{T \ra \infty} S_{\phi,\eta,C}(T)=0= \chi_{c}(C)$.
\end{enumerate}
\end{lem}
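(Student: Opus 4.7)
The plan is to handle each of the three cases by direct lattice-point summation, factorise the resulting double sum into geometric series to obtain the closed form, and then evaluate $\lim_{T\to\infty}$ using its defining property on products of the generators $p_{e,i}$.

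For case (\ref{cone1}), I would parametrise $C\cap\ZZ^2$ using the tiling of $C$ by integer translates of the fundamental parallelepiped. Since $\vec{\omega}_1,\vec{\omega}_2$ are primitive and non-colinear in $\NN^2$, every lattice point in the open cone admits a unique decomposition
\[ (k,l)=(k_0,l_0)+m_1\vec{\omega}_1+m_2\vec{\omega}_2,\qquad (k_0,l_0)\in\mP,\ m_1,m_2\in\ZZ_{\geq 0}, \]
with $\mP$ finite. Substituting into $S_{\phi,\eta,C}(T)$ and exploiting $\ZZ$-linearity of $\phi$ and $\eta$ factorises the series as a finite sum over $\mP$ multiplied by two independent geometric series, yielding the closed form \eqref{casconedim2}. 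For the limit, set $\alpha_j=\LL^{-\eta(\vec{\omega}_j)}T^{\phi(\vec{\omega}_j)}$ and $p_j=p_{-\eta(\vec{\omega}_j),\phi(\vec{\omega}_j)}(T)$, so that $1/(1-\alpha_j)=1+p_j$. The key observation is that $(k_0,l_0)=\vec{\omega}_1+\vec{\omega}_2$ always lies in $\mP$, and the corresponding summand collapses to $\alpha_1\alpha_2(1+p_1)(1+p_2)=p_1p_2$, whose limit is $(-1)^2=1$. For any other $(k_0,l_0)\in\mP$, writing $(k_0,l_0)=a_1\vec{\omega}_1+a_2\vec{\omega}_2$ with $(a_1,a_2)\in(0,1]^2\setminus\{(1,1)\}$ gives $\phi(k_0,l_0)<\phi(\vec{\omega}_1)+\phi(\vec{\omega}_2)$; expanding $(1+p_1)(1+p_2)$ and using $A$-linearity of $\lim_{T\to\infty}$, one checks that all such summands contribute zero. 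Summing over $\mP$ recovers $1=\chi_c(C)$, consistent with $\chi_c(\RR_{>0}^2)=1$.

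Case (\ref{cone2}) is immediate: the lattice points of $\RR_{>0}\vec{\omega}$ are $\{m\vec{\omega}:m\geq 1\}$, so
\[ S_{\phi,\eta,C}(T)=\sum_{m\geq 1}\bigl(\LL^{-\eta(\vec{\omega})}T^{\phi(\vec{\omega})}\bigr)^{m}=p_{-\eta(\vec{\omega}),\phi(\vec{\omega})}(T), \]
whose limit is $-1=\chi_c(\RR_{>0})$. Case (\ref{cone3}) then follows from the disjoint decomposition $C=(\RR_{>0}\vec{\omega}_1+\RR_{>0}\vec{\omega}_2)\sqcup\RR_{>0}\vec{\omega}_2$: the series splits as the sum of the two expressions produced in (\ref{cone1}) and (\ref{cone2}), giving \eqref{casconedim2-bis}. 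Additivity of $\lim_{T\to\infty}$ and of $\chi_c$ on locally closed decompositions then yields the limit $1+(-1)=0$.

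The main obstacle is the rigorous limit computation in (\ref{cone1}): one must verify that each summand corresponding to $(k_0,l_0)\neq\vec{\omega}_1+\vec{\omega}_2$ can be rewritten as an $A$-linear combination of finite products $\prod_{i\in\setI}p_{e_i,j_i}$ in such a way that the defining rule $\lim\prod p_{e_i,j_i}=(-1)^{|\setI|}$ makes the contributions telescope to zero. This is purely algebraic bookkeeping on rational fractions in $A[[T]]_{\rs}$, but it is the delicate step where one must keep track of the strict inequality $\phi(k_0,l_0)<\phi(\vec{\omega}_1)+\phi(\vec{\omega}_2)$ throughout.
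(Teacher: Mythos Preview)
Your approach is correct and coincides with the paper's own treatment, which merely states that the proof ``is based on summation of geometric series'' and defers to \cite[Lemma 1]{carai-antonio}, \cite[Lemma 2.1]{carai-infini}, \cite[Lemme 2.1.5]{Gui02a} and \cite[\S 2.9]{GuiLoeMer06a}. The delicate step you flag in case~(\ref{cone1}) is most cleanly bypassed either by subdividing $C$ into unimodular subcones (so that each piece has $|\mP|=1$ and contributes exactly one product $p_1'p_2'$, after which additivity over the fan gives $\lim=1$), or by noting that $\lim_{T\to\infty}$ coincides with the formal constant term of the $T^{-1}$-expansion, whence only the top-degree numerator monomial---namely the one coming from $(k_0,l_0)=\vec{\omega}_1+\vec{\omega}_2$---survives.
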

	
\subsection{Arcs} \label{action}
Let $V$ be a $\k$-variety. For any integer $n$, we denote by $\L_{n}(V)$ the \emph{space of $n$-jets} of $V$. 
This set is a $\k$-scheme of finite type and its $\K$-rational points are morphisms $\Spec(\K[t]/t^{n+1})\to V$, for any extension $\K$ of $\k$. 
There are canonical morphisms $\L_{n+1}(V)\to \L_{n}(V)$. These morphisms are $\AA_{\k}^{d}$-bundles when $V$ is smooth with dimension $d$.  
The \emph{arc space} of $V$, denoted by  $\L(V)$, is the projective limit of this system. 
This set is a $\k$-scheme and we denote by $\pi_{n}\colon\L(V)\to \L_{n}(V)$ the canonical morphism {called \emph{truncation map}} modulo $t^{n+1}$, see \cite{DenLoe99a,Loo02a, CNS}.

For a non zero element $\varphi$ in $\K[[t]]$ or in $\K[t]/t^{n+1}$, we denote by $\ord{\varphi}$ the valuation of $\varphi$ and by $\ac{\varphi}$ 
the coefficient  of $t^{\ord{\varphi}}$ in $\varphi$ called the \emph{angular component} of $\varphi$. By convention $\ac{0}$ is zero. 
The multiplicative group $\Gm$ acts canonically on $\L_{n}(V)$ by $\lambda.\varphi(t):=\varphi(\lambda t).$ 
We consider the application \emph{origin} $\pi_0\colon\varphi\mapsto \varphi(0)=\varphi \mod t$.

Let $F$ be a closed
subscheme of $V$ and $\mI_{F}$ be the sheaf ideal of regular functions on $V$ which
vanish on $F$. We denote by $\ord{F}$ the function which assigns to each arc
$\varphi$ in $\L(V)$, the bound $\inf \ord{g(\varphi)}$ where $g$ runs over $\mI_{F,\varphi(0)}$.

\section{Motivic Milnor fibers of a rational function at an indeterminacy point in \texorpdfstring{$\AA^2_\k$}{}}

\subsection{Setting}  \label{setting-cas-local} \label{notations:Xeps-omega}
We fix notation for this section and the rest of this article. 
\begin{itemize}
	\item As $\k$ is assumed to be algebraically closed, forgetting the scheme structure, we simply write $\AA^{1}_{\k}$ and $\PP^{1}_{\k}$ for their set of $\k$-rational points $\AA^{1}_{\k}(\k)$ and $\PP^{1}_{\k}(\k)$. 
		For any $c\in \k$, we identify $c$ with the point $[c:1] \in \PP^{1}_{\k}$ and we denote by $\infty$ the point $[1:0] \in \PP^1_{\k}$.
	\item Let $P$ and $Q$ be two polynomials in $\k[x,y]$. We assume $(P,Q)$ not in $\k[x]^2$ or $\k[y]^2$. 
		We consider the algebraic variety $I$ defined by the ideal $\I=(P,Q)$ 
		and the function ${f=[P:Q] \colon \AA^2_{\k} \setminus I \to \PP^1_{\k}}$ with graph
		\begin{equation} \label{defX} 
			X = \{((x,y),[u:v])\in (\AA^2_{\k} \setminus I) \times \PP^1_{\k} \mid vP(x,y)=uQ(x,y)\}.
		\end{equation}
		We denote by $\X$ the Zariski closure of $X$ in $\A^2_{\k}\times \P^1_{\k}$ and by $\F$ the closed subset $I \times \P^1_{\k}$.
		The singular locus of $\X$ is contained in $\F$. We have the following commutative diagram 
		\[
			\xymatrix{ 
				\AA_\k^2 \setminus I \ar^-{\i}[r] \ar_{f}[d] & \overline{X} \ar^{\f}[ld] \\ 
			\PP^1_\k} 
		\]
		where $\i$ is the open immersion
		given by $\i(x,y)=\left((x,y),f(x,y)\right)$ and $\f$ is the projection along the last coordinate which extends the application $f$ to $\X$.
\end{itemize}

\subsection{Topological point of view}
We assume $\k = \CC$ and use notation of section \ref{setting-cas-local}.

\begin{theorem}[\cite{GusLueMel98a} and {\cite[Theorem 1.2]{GusLueMel01a}}] 
	Let $\x\in I$. For any value $c \in \PP^{1}_{\CC}$, there is $\eps_{0}>0$ such that for all $\eps \in [\eps_{0},0[$, 
			the function $f \colon B(\x,\eps)\setminus I\to \PP^{1}_{\CC}$ is a smooth locally trivial fibration over a punctured disk $D(c,\delta)\setminus \{c\}$ called \emph{Milnor fibration of $f$ at $\x$ for the value $c$}. 
\end{theorem}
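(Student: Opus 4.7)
The plan is to reduce the statement to the classical Milnor fibration theorem for proper morphisms between smooth algebraic varieties, via a resolution of the indeterminacy of $f$ at $\x$, followed by an application of Thom's first isotopy lemma.

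First, I would replace $f$ by a morphism. Starting from the graph model $\X$ introduced in \eqref{defX}, I would choose a proper birational morphism $\pi\colon Y\to \X\to\AA^{2}_{\CC}$ with $Y$ smooth, obtained by composing an embedded resolution of singularities of $\X$ along $\F$ with the first projection; concretely one can also realise $\pi$ as a finite sequence of point blow-ups resolving the base ideal $\I=(P,Q)$. On $Y$, the rational function $f\circ\pi$ extends to an algebraic morphism $\widetilde f\colon Y\to\PP^{1}_{\CC}$, and the exceptional locus $E=\pi^{-1}(\x)$ is a connected compact projective curve; moreover $\pi$ is an isomorphism over $\AA^{2}_{\CC}\setminus I$ (since $\x$ is the only indeterminacy we care about and $I$ is zero-dimensional near $\x$), so $f=\widetilde f\circ\pi^{-1}$ outside $E$.

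Second, I would pick a Whitney stratification $\mathcal{S}$ of $Y$ that is compatible with $E$, with each of its irreducible components and their pairwise intersections, and with the fibre $\widetilde f^{-1}(c)$; such a stratification exists since, after possibly a few more blow-ups, one can arrange $E\cup \widetilde f^{-1}(c)$ to be a simple normal crossing divisor. By construction the restriction of $\widetilde f$ to each stratum is either constant or a submersion, so $\widetilde f\colon Y\to \PP^{1}_{\CC}$ is a stratified submersion away from the critical values of $\widetilde f$. These critical values form a finite subset of $\PP^{1}_{\CC}$, hence there exists $\delta>0$ with $D(c,\delta)\setminus\{c\}$ disjoint from this finite set.

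Third, I would choose $\eps_0>0$ so small that for every $0<\eps\leq \eps_0$ the real analytic hypersurface $\pi^{-1}(\partial B(\x,\eps))$ is Whitney transverse to every stratum of $\mathcal{S}$; this transversality is the standard consequence of the curve selection lemma applied to the real analytic function $\|\pi(\cdot)-\x\|^{2}$ on each stratum. Then the tube $U_{\eps}:=\pi^{-1}(\overline{B(\x,\eps)})$ is a compact manifold with corners carrying an induced Whitney stratification, and the restriction
\[
\widetilde f\colon U_{\eps}\cap \widetilde f^{-1}\!\bigl(D(c,\delta)\setminus\{c\}\bigr)\To D(c,\delta)\setminus\{c\}
\]
is a proper stratified submersion. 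Thom's first isotopy lemma then yields that this map is a locally trivial topological fibration, smooth on each stratum. Removing the exceptional divisor $E$, and pushing down by the isomorphism $\pi\colon Y\setminus E\to \AA^{2}_{\CC}\setminus\{\x\}$, one obtains the desired smooth locally trivial fibration $f\colon B(\x,\eps)\setminus I\to D(c,\delta)\setminus\{c\}$.

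The main obstacle is the transversality in the third step: one must ensure simultaneously that $\partial B(\x,\eps)$ meets all the relevant strata transversally for all sufficiently small $\eps$ and that no critical value of $\widetilde f$ accumulates to $c$ from a puncture. The finiteness of critical values of the algebraic proper map $\widetilde f$ handles the latter, while the former is classical in the Milnor-fibration philosophy and is what forces the choice $\eps_0$ to depend on $c$ (the stratification depends on the fibre $\widetilde f^{-1}(c)$). Once these transversality statements are in place, Thom's isotopy lemma closes the argument without any further computation.
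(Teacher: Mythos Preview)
The paper does not give its own proof of this theorem; it is quoted verbatim as a result of Gusein-Zade, Luengo and Melle-Hern\'andez \cite{GusLueMel98a,GusLueMel01a}. Your outline is in fact the standard argument used in those references and elsewhere: resolve the indeterminacy of $f$ to obtain a proper holomorphic map $\widetilde f\colon Y\to\PP^1_{\CC}$, stratify $Y$ compatibly with the exceptional locus and the fibre over $c$, use the curve selection lemma to get transversality of small spheres with all strata, and then invoke Thom's first isotopy lemma to produce a stratum-preserving local trivialization over $D(c,\delta)\setminus\{c\}$.

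Two small points deserve care. First, you assert that $I$ is zero-dimensional near $\x$; this fails when $P$ and $Q$ share a common irreducible component through $\x$, in which case $\pi^{-1}(B(\x,\eps)\cap I)$ is not just $\pi^{-1}(\x)$ but also contains the strict transform of that component, and your stratification must include it as a union of strata. Second, the passage from the fibration on $U_\eps$ to the fibration on $B(\x,\eps)\setminus I$ requires that the trivializations furnished by Thom's lemma preserve the strata lying over $I$, so that removing them still yields a locally trivial map; this is indeed part of the conclusion of the isotopy lemma, but should be stated. With these adjustments your argument is correct and matches the approach in the cited sources.
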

\begin{defn}
	The \emph{topological Milnor fiber of $f$ at $\x$ for the value $c$}  
	\[F_{\x,c}:=\{z \in B(\x,\eps)\setminus I \mid f(z)=c'\}\]
	is any fiber of the above fibration endowed with a \emph{monodromy transformation} $T_{\x,c}\colon F_{\x,c}\ra F_{\x,c}$.
\end{defn}
\begin{defn}
	A value $c \in \PP^{1}_{\mathbb C}$ is said to be \emph{typical} for $f$ at $\x \in I$, if for all $\eps$ small enough, the function $f \colon B(\x,\eps)\setminus I\ra \PP^{1}_{\CC}$ is a smooth trivial fibration over a neighborhood of $c$, otherwise $c$ is said to be \emph{atypical}.
	The \emph{topological bifurcation set $\B_{f,\x}^{\top}$ of $f$ at $\x$} is the set of atypical values.
\end{defn}

\begin{theorem}[\cite{GusLueMel98a}, {\cite[Theorem 1.10, Theorem 1.14]{GusLueMel01a}}] \label{Mfc} \label{criterebfx} 
	Assume the curves $P=0$ and $Q=0$ do not have a common irreducible component. Let $\x\in I$.
	Assume for any $c \in \PP^{1}_{\CC}$, $\x$ is an isolated critical point of $P-cQ$. Then the function $c\mapsto \nMilnor(P-cQ,\x)$ is constant over $\PP^{1}_{\CC} \setminus \B_{f,\x}^{\top}$, and its value is denoted by $\nMilnor(P-c_{\gen} Q,\x)$. Under this assumption,
	we have for any $c\in \PP^{1}_{\CC}$
	\begin{equation} \label{caracteristiqueEulerfibredeMilnor} 
		\chi_c(F_{\x,c}) = \nMilnor(P-c_{\gen} Q,\x)-\nMilnor(P-cQ,\x)
	\end{equation}
	and $c$ belongs to $\B_{f,\x}^{\top}$ if and only if $\chi_c(F_{\x,c}) \neq 0$.
\end{theorem}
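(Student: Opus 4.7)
The three assertions to establish are: (i) the constancy of $c \mapsto \nMilnor(P-cQ,\x)$ on $\PP^{1}_{\CC} \setminus \B_{f,\x}^{\top}$; (ii) the identity $\chi_c(F_{\x,c}) = \nMilnor(P - c_{\gen}Q,\x) - \nMilnor(P - cQ,\x)$; and (iii) the criterion $c \in \B_{f,\x}^{\top} \Leftrightarrow \chi_c(F_{\x,c}) \neq 0$. My plan is to reduce everything to (ii) via an A'Campo-type Euler characteristic formula applied on a common embedded resolution of the pencil $\{P - cQ\}_{c \in \PP^{1}_{\CC}}$, and then to derive (i) and (iii) using upper semicontinuity.

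First I would construct a composition $\pi \colon Y \to \AA_{\CC}^{2}$ of point blowups above $\x$ with two properties: the composition $\tilde{f} := f \circ \pi \colon Y \to \PP^{1}_{\CC}$ extends to a regular morphism (resolving the indeterminacy of $f$), and the total transform of $\{PQ = 0\}$ is a simple normal crossing divisor. The exceptional locus $E := \pi^{-1}(\x)$ decomposes into irreducible components $E_{i}$ which split into \emph{dicritical} ones (on which $\tilde{f}$ is dominant) and \emph{non-dicritical} ones (contracted by $\tilde{f}$ to a single value $c_{i} \in \PP^{1}_{\CC}$). For a typical value $c'$ close to but distinct from $c$, the fibre $\tilde{f}^{-1}(c')$ is smooth near $E$, meets each dicritical $E_{i}$ transversally in $\deg(\tilde{f}|_{E_{i}})$ points, and misses every non-dicritical component; under $\pi$ it is identified with $F_{\x,c}$ together with the finitely many intersection points with $E$ to be removed.

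Next I would apply an A'Campo-type computation on $Y$ to write $\chi_c(F_{\x,c})$ as an explicit sum indexed by the components $E_{i}$, involving the multiplicities $N_{i}(P)$, $N_{i}(Q)$, $N_{i}(P - cQ)$ of the pulled-back functions along $E_{i}$ and the Euler characteristics of the open strata $E_{i}^{\circ} := E_{i} \setminus \bigcup_{j \neq i}E_{j}$. The same machinery applied to the holomorphic germ $P - cQ$ at $\x$ computes $1 - \nMilnor(P - cQ, \x)$ as an analogous sum. The crucial observation is that $N_{i}(P - cQ) = \min(N_{i}(P), N_{i}(Q))$ on every component except on dicritical ones where $\tilde{f}$ takes the value $c$, which produces an exceptional contribution. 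Subtracting the two formulas written at a generic $c_{\gen}$ and at $c$ cancels the common terms and leaves exactly $\nMilnor(P - c_{\gen}Q, \x) - \nMilnor(P - cQ, \x)$, proving (ii).

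For (i) and (iii), upper semicontinuity of the Milnor number (Teissier) implies that $c \mapsto \nMilnor(P - cQ, \x)$ is constant on a Zariski-open dense subset $U \subseteq \PP^{1}_{\CC}$. By (ii), $\chi_c(F_{\x,c}) = 0$ precisely on $U$, and Thom's first isotopy lemma applied to the proper map $\tilde{f}$ furnishes a smooth local trivialisation of $f$ above $U$, so $U \subseteq \PP^{1}_{\CC} \setminus \B_{f,\x}^{\top}$. The reverse inclusion is automatic from (ii), since $\chi_c(F_{\x,c}) \neq 0$ detects topological non-triviality of the fibration at $c$; this gives (i) and (iii) simultaneously. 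The main technical obstacle I expect is implementing the A'Campo formula correctly in the meromorphic setting: while the order of vanishing of $f - c$ along a dicritical $E_{i}$ is naturally $N_{i}(P) - c\,N_{i}(Q)$ in formal terms, producing its actual contribution to $\chi_c(F_{\x,c})$ requires carefully matching punctured neighborhoods of $\tilde{f}^{-1}(c) \cap E_{i}^{\circ}$ against the branching of $\tilde{f}|_{E_{i}}$, and these contributions must be aligned precisely with those appearing in the Milnor-number sum.
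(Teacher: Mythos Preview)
The paper does not give its own proof of this statement: it is quoted verbatim from Gusein--Zade, Luengo and Melle--Hern\'andez \cite{GusLueMel98a,GusLueMel01a} and used as a black box (for instance in the proofs of Theorems~\ref{thmBfNewtonBftop} and \ref{thmegaliteBf}). So there is no ``paper's proof'' to compare against; your sketch is to be measured against the original references.

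Your strategy---resolve the indeterminacy of $f$ by blowups, separate exceptional components into dicritical and non-dicritical, and compare an A'Campo-type count for $\chi_c(F_{\x,c})$ with the A'Campo formula for $1-\nMilnor(P-cQ,\x)$---is exactly the approach of \cite{GusLueMel01a}. Two places deserve tightening. First, your ``crucial observation'' is stated backwards: on a dicritical component one always has $N_i(P)=N_i(Q)$ and $N_i(P-cQ)=N_i(P)$ for \emph{every} $c$; the jump $N_i(P-cQ)>\min(N_i(P),N_i(Q))$ occurs precisely on the \emph{non}-dicritical components $E_i$ whose constant value $c_i$ equals $c$. What the dicritical components contribute is rather the variation in where the strict transform of $\{P-cQ=0\}$ meets $E_i$ (namely at $\tilde f|_{E_i}^{-1}(c)$), and the ramification of $\tilde f|_{E_i}$ at $c$ is what feeds into the comparison. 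Second, deducing (iii) from ``$\nMilnor$ constant $\Rightarrow$ topologically trivial'' is not a one-liner via Thom's isotopy lemma: you need either a $\mu$-constant argument in the spirit of L\^e--Ramanujam for the family $P-cQ$, or the explicit fibre description on the resolution that \cite{GusLueMel01a} carries out. Once these two points are handled your outline matches the original proof.
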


\subsection{Motivic point of view} \label{section-Sfxc}
Using constructions of Denef--Loeser \cite{DenLoe99a,DenLoe01b} and Guibert--Loeser--Merle \cite{GuiLoeMer06a}, a motivic Milnor fiber of $f$ at a point $\x \in I$ for a value $c$ is introduced in \cite[\S 4]{Raibaut-fractions} (see also \cite{GVLM} and \cite{Nguyen-Takeuchi}). We give in Theorem \ref{thmSf00c} its computation in dimension $2$ in a full generality.
\begin{notation} In the following, we use notation of section \ref{setting-cas-local} and consider the chart of $\X$:
	\[\X_{v \neq 0} = \{((x,y),z)\in \AA^2_{\k}\times \AA^1_\k \mid P(x,y)=zQ(x,y)\}.\]
	The restriction of $\f$ to the chart $\X_{v\neq 0}$ is the projection $z \colon ((x,y),z)\mapsto z$.
\end{notation}

\begin{defn}\label{defmotzetafct}
	For $\delta >0$, we consider the \emph{motivic zeta function} of $f$ at $\x \in I$ and $c \in \AA^{1}_{\k}$
	\begin{equation} \label{zeta2} 
		\left(Z_{\f,X}^{\delta}(T)\right)_{(\x,c)} = 
		\sum_{n\geq 1} \mes\!\left(X_{n,\x,c}^{\delta}\right) T^n  \in \M_{\Gm}^{\Gm}[\![T]\!]
	\end{equation}
	 for any integer $n\geq 1$, we set
	\begin{equation}\label{Xnc}
		X_{n,\x,c}^{\delta} = 
		\left\{ 
			\varphi \in \L(\X_{v\neq 0}) | \varphi(0)=(\x,c),\: 
			\ord{z(\varphi)-c} = n,\ord{ \F }\!(\varphi)\leq n\delta
		\right\} 
	\end{equation}
	endowed with the standard action of $\Gm$ on arcs and the structural map
	\[ \begin{array}{ccccl} 
			\ac{z-c} & \colon & X_{n,\x,c}^{\delta} & \to & \Gm \\
			&        & \varphi & \mapsto & \ac{z(\varphi)-c}
		\end{array}
	\]
	and $\mes\!\left(X_{n,\x,c}^{\delta}\right)$ is the motivic measure of $X_{n,\x,c}^{\delta}$ defined as 
	\[\mes\!\left(X_{n,\x,c}^{\delta}\right) = [\pi_{m}(X_{n,\x,c}^{\delta})]\LL^{-2m} \in \M_{\Gm}^{\Gm} \] for any $m\geq m_0$ with $m_0$ large enough. Note that if $\X$ is smooth then $m_0=n$ and if $\X$ is singular, then its singular locus is contained in $\F$ and controlled by the condition $\ord{\F}\!(\varphi)\leq n\delta$ and \cite[Lemma 4.1]{DenLoe99a} is used.

	More generally (and similarly to \cite{Veys01, ArtCasLue05a, CassouVeys13, carai-antonio, carai-infini}), we will need to consider motivic zeta functions with differential forms. 
	Let $\nu \in \NN_{\geq 1}$ and a differential form $\omega$ on $\X$, such that the restriction to the open set $X$ is equal to $x^{\nu-1}dx \wedge dy$ and $\nu=1$ if ``$x=0$'' is not included in $I$. 
	In particular its zero locus is contained in the closed subset $\F$ of $\X$.
	We consider the motivic zeta function
	\begin{equation} \label{zeta1} 
		\left(Z_{\f,\omega,X}^{\delta}(T)\right)_{(\x,c)} = 
		\sum_{n\geq 1} \left( \sum_{k \geq 1} \mathbb L^{-(\nu-1)k} \mes\!\left(X_{n,k,\x,c}^{\delta}\right) \right) T^n 
	\end{equation}
	where for any integers $n\geq 1$ and $k\geq 1$, we set
	\begin{equation}\label{Xepsnk}
		X_{n,k,\x,c}^{\delta} = 
		\left\{ 
			\begin{array}{c|c}
				\varphi \in \L(\X_{v\neq 0}) & 
				\begin{array}{l}        
					\varphi(0)=(\x,c),\: 
					\ord{x(\varphi)} = k,\: \ord{\omega(\varphi)} = (\nu-1)k\\
					\ord{z(\varphi)-c} = n,\ \ord{\F}\!(\varphi)\leq n\delta
				\end{array}
			\end{array}
		\right\} 
	\end{equation}
	endowed with the structural map $\ac{z-c}$ to $\Gm$ and the $\Gm$-standard action on arcs. 
\end{defn}

\begin{rem}\label{rem:arc-carte} Some remarks:
	\begin{itemize}
		\item For any arc $\varphi$, we have $\ord{ \F }\!(\varphi)=\min(\Ord{P(x(\varphi),y(\varphi))},\Ord{Q(x(\varphi),y(\varphi))})$.\\
			So for any $\varphi$ in $X_{n,\x,c}^{\delta}$ or $X_{n,k,\x,c}^{\delta}$, we have 
			\[\Ord{(P-cQ)(x(\varphi),y(\varphi))}-\Ord{Q(x(\varphi),y(\varphi))}=n>0\] 
			implying that $\Ord{P(x(\varphi),y(\varphi))} = \Ord{Q(x(\varphi),y(\varphi))} = \ord{ \F }\!(\varphi)$.
		\item Formula (\ref{zeta2}) is a special case when $\nu = 1$ of formula (\ref{zeta1}).
			When $\nu\geq 1$, by assumption the set ``$x=0$'' is included in $I$, then in formula (\ref{zeta1}), the sum over $k$ is finite. 
			Indeed, by assumption there is an integer $M>0$ such that $x^M$ divides $P$ and $Q$, then $k \leq n \delta /M$. 
	\end{itemize}
\end{rem}

The following result (see \cite{Raibaut-fractions}, \cite{Nguyen-Takeuchi}) is a corollary of \cite[Theorem 4.2.1]{DenLoe98b} and \cite[\S 3.7, \S 3.16]{GuiLoeMer06a}.

\begin{theorem} \label{rationalite} \label{realisationMHMlocal} \label{realisationchic}

	For $\delta$ large enough, the motivic zeta function $\left(Z_{\f,X}^{\delta}(T)\right)_{(\x,c)}$ is rational,  
	\[S_{f,\x,c} : = -\lim_{T \to \infty} \left(Z_{\f,X}^{\delta}(T)\right)_{(\x,c)} \in \M_{\Gm}^{\Gm}\]
	does not depend on $\delta$ and is called \emph{motivic Milnor fiber of $f$ at $\x$ for the value $c$}.

	It realizes on the class of the mixed Hodge module $\left[\mathcal F^{\MHM\circlearrowleft}_{\x,c}\right]$ attached to $f$ in \cite{Sai90a}
	\[\tilde{\chi}^{\MHM}\left(S_{f,\x,c}^{(1)}\right)=\left[\mathcal F^{\MHM\circlearrowleft}_{\x,c}\right] \in K_{0}(\MHM^{\mon}_{\x,c}).\]
	In particular, taking the Euler characteristic realization, we have  
	\[\widetilde{\chi_c}\left(S_{f,\x,c}^{(1)}\right) = \chi_{c}(F_{\x,c}).\]
\end{theorem}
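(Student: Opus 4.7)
The plan is to reduce the three statements to the motivic integration machinery of Denef--Loeser \cite{DenLoe98b,DenLoe99a} combined with the relative $\delta$-formalism of Guibert--Loeser--Merle \cite{GuiLoeMer06a}. Since $\X_{v\neq 0}$ is singular only along $\F$, I would first choose a log resolution $h\colon Y \to \X_{v\neq 0}$ of the ideal $(z-c)\cdot \I_{\F}$, yielding a smooth $Y$ equipped with a simple normal crossing divisor $\sum_{j\in J} E_j$ above $(\x,c)$. Denote by $N_j$, $\mu_j$, $\nu_j-1$ the multiplicities along $E_j$ of $h^{*}(z-c)$, $h^{*}\I_{\F}$, and of the relative canonical divisor $K_{Y/\X}$ (twisted by $\omega$ in the variant (\ref{zeta1})). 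The crucial role of the constraint $\ord{\F}\!(\varphi)\leq n\delta$ is to bring us within the scope of \cite[Lemma~4.1]{DenLoe99a}, allowing the Kontsevich--Denef--Loeser change of variables formula to apply despite the singularities of $\X$.

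Pulling the zeta function back via $h$, I would stratify arcs on $Y$ by $J(\psi) = \set{j \mid \ord{E_j}(\psi) > 0}$. Each stratum contributes a series of the form
\[\sum_{\mathbf{k}\in C_{J}^{\delta,n}} \bigl[\widetilde{E}_{J}^{\circ}\bigr]\,\LL^{-\sum_{j\in J} \nu_j k_j}\,T^{\sum_{j\in J} N_j k_j}\]
indexed by integer points of a rational polyhedral cone $C_J$ intersected with $\sum_{j\in J} \mu_j k_j \leq n\delta$, where $\widetilde{E}_{J}^{\circ}$ is a natural Galois cover of $E_{J}^{\circ} = \bigcap_{j\in J} E_j \setminus \bigcup_{j\notin J} E_j$ carrying the monomial $\Gm$-action induced by $\Ac{(z-c)\circ h}$. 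Applying Lemma~\ref{lemmedescones} to each cone yields rationality of $(Z_{\f,X}^{\delta}(T))_{(\x,c)}$ together with an explicit formula for $-\lim_{T\to\infty}$; moreover the strata whose cones are sliced by the $\delta$-bound contribute half-open cones as in part (\ref{cone3}) of Lemma~\ref{lemmedescones}, which realize to $\Eu(C)=0$ in the limit, so the independence on $\delta$ for $\delta$ large follows exactly as in \cite[\S 3.7, \S 3.16]{GuiLoeMer06a}.

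For the Hodge-theoretic realization, I would apply Saito's vanishing cycle functor $\psi_{\f}^{\mathrm{Hdg}}$ term by term to the stratified resolution formula, invoking Denef--Loeser's comparison theorem identifying the motivic nearby fiber with Saito's vanishing cycles through the realization morphism $\tilde{\chi}^{\MHM}$. Specialising further to the Euler characteristic realization, and identifying the nearby fiber of $\f$ at $(\x,c)$ with the topological Milnor fiber $F_{\x,c}$ of Gusein-Zade--Luengo--Melle-Hern\'andez \cite{GusLueMel98a,GusLueMel01a}, yields $\widetilde{\chi_c}\bigl(S_{f,\x,c}^{(1)}\bigr) = \chi_c(F_{\x,c})$.

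The main obstacle is the bookkeeping around the singular variety $\X$: one must pin down an effective $\delta$-threshold strictly above the multiplicities of the relative Jacobian of $h$ along $h^{-1}(\F)$, and verify that both Saito's functor and the Euler characteristic realization commute with the change of variables, which reduces to proper base change for $\f$ together with the fact that $h$ is an isomorphism outside $\F$.
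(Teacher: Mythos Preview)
Your proposal is correct and aligns with the paper's own treatment: the paper does not actually give a proof of this theorem but states it as a corollary of \cite[Theorem~4.2.1]{DenLoe98b} and \cite[\S 3.7, \S 3.16]{GuiLoeMer06a}, with pointers to \cite{Raibaut-fractions} and \cite{Nguyen-Takeuchi} for the specific formulation. Your sketch---log resolution of $(z-c)\cdot\mI_{\F}$, the Kontsevich--Denef--Loeser change of variables controlled by \cite[Lemma~4.1]{DenLoe99a}, stratification by $E_J^{\circ}$, rationality and $\delta$-independence via the cone summations of Lemma~\ref{lemmedescones}, and the Hodge realization through Saito's nearby cycles---is precisely the argument packaged in those references, so you have simply unpacked what the paper leaves implicit.
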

\begin{thm}\label{thmSf00c} \label{formuleSf}
	Let $(P,Q) \in \k[x,y]^2$ not in $\k[x]^2$ or $\k[y]^2$, with $P(0,0)=Q(0,0)=0$ \footnote{This theorem is stated with $\x=(0,0)$ because we use Newton polygons. We can always do this by translation.}.

	Let $f=[P:Q]$, $\overline{f}$, $X$ and $c \in \A^{1}_{\k}$ as in section \ref{setting-cas-local}. 

	Let $\omega$ be the differential form $x^{\nu-1}dx \wedge dy$ with $\nu \geq 1$ as in Definition~\ref{defmotzetafct}.
	
	There is $\delta_0>0$ such that for any $\delta > \delta_0$, $\left(Z_{\f,\omega,X}^{\delta}(T)\right)_{((0,0),c)}$ is rational,
	we denote 
	\[	
		S_{f,(0,0),c}  =  \left(S_{\f, \omega, X}\right)_{((0,0),c)}:= 
		-\ds\lim_{T \to \infty} \left(Z_{\f,\omega,X}^{\delta}(T)\right)_{((0,0),c)} 
	\]
	and we have  
	\begin{multline} \label{formulethmprincipal}
		S_{f,(0,0),c}   
		=  \eps_{C_h}\left[x^{(\gamma_{h}(P-cQ)-\gamma_{h}(Q) \mid (1,0))}y^{(\gamma_{h}(P-cQ)-\gamma_{h}(Q) \mid (0,1))} \colon \Gm^{r_h} \to \Gm, \sigma_{\Gm^{r_h}}\right] 
		\\ + \eps_{C_v}\left[x^{(\gamma_{v}(P-cQ)-\gamma_{v}(Q) \mid (1,0))}y^{(\gamma_{v}(P-cQ)-\gamma_{v}(Q) \mid (0,1))} \colon \Gm^{r_v} \to \Gm, \sigma_{\Gm^{r_v}}\right] \\ 
		- \suml_{C \in \E_c \setminus \{C_{h},C_{v}\},\:\dim C=2} 
		\eps_{C} \left[\frac{(P-cQ)_{\gamma_{P-cQ}(C)}}{Q_{\gamma_{Q}(C)}}\colon\Gm^2 \to \Gm, \sigma_{C}\right] \\
		-  \suml_{C \in \E_c,\ \dim C=1} \eps_{C}
		\left[\frac{(P-cQ)_{\gamma_{P-cQ}(C)}}{Q_{\gamma_{Q}(C)}}\colon\Gm^2 \setminus \{(P-cQ)_{\gamma_{P-cQ}(C)}\cdot Q_{\gamma_{Q}(C)}=0\} \to \Gm, \sigma_{C}\right]\\ 
		+   \suml_{C \in \E_c,\ \dim C=1}\:\: \suml_{\mu \in \R_{\gamma_{P-cQ}}(C)\cup \R_{\gamma_{Q}(C)}}  
		S_{f_{\tsigma_{(p,q,\mu)}},(0,0),c}
	\end{multline}
	where $\gamma_{h}$ and $\gamma_{v}$ are introduced in Definition \ref{def:Newton-polygon-height}, 
	$f_{\tsigma(p,q,\mu)} = [P_{\tsigma(p,q,\mu)}:Q_{\tsigma(p,q,\mu)}]$ and\\ 
	\begin{itemize}
		\item $C_h$ (resp $C_v$) is the cone of the fan $\E_c$ associated to the faces $\gamma_{h}(P-cQ)$ and $\gamma_{h}(Q)$ (resp $\gamma_{v}(P-cQ)$ and $\gamma_{v}(Q)$), $(r_{h},r_{v})\in \{1,2\}^2$ and $(\eps_{C_h},\eps_{C_v}) \in \{-1,0,1\}^2$ in \emph{Proposition~\ref{conedim2}},\\

		\item if $C$ is a two-dimensional cone of $\E_c \setminus \{C_h, C_v\}$ then $\eps_{C} \in \{0,1\}$ in \emph{Proposition~\ref{conedim2}},\\

		\item if $C$ is a one-dimensional cone of $\E_c$ then $\eps_{C} \in \{-1,0\}$ in \emph{Proposition~\ref{conedim1}},\\

		\item  $\sigma_{\Gm}$ and $\sigma_{\Gm^2}$ are the actions of $\Gm$ on $\Gm$ and $\Gm^2$ defined by 
			$\sigma_{\Gm}(\lambda,x)=\lambda x$ and $\sigma_{\Gm^2}(\lambda,(x,y))=(\lambda x, \lambda y)$, and 
			$\sigma_C$ is the action of $\Gm$ induced by a cone $C$ in \emph{Notations~\ref{notation-identification-action}}.
	\end{itemize}
\end{thm}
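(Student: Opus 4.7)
The plan is to compute the rational motivic zeta function $\left(Z_{\f,\omega,X}^{\delta}(T)\right)_{((0,0),c)}$ by partitioning its defining arc space $X_{n,k,(0,0),c}^{\delta}$ according to the orders of the two coordinates of the arcs, then take $-\lim_{T\to\infty}$. First I would work in the affine chart $\X_{v\neq 0}$ where an arc $\varphi$ is a triple $(x(t),y(t),z(t))$ satisfying $P(x(t),y(t))=z(t)Q(x(t),y(t))$, and I would attach to such an arc the datum $(k,l)=(\Ord{x(\varphi)},\Ord{y(\varphi)})\in\NN^{2}$. By Remark~\ref{rem:arc-carte}, the integration condition $\Ord{z(\varphi)-c}=n$ together with $\ord{\F}\!(\varphi)\leq n\delta$ translates into the equality $\Ord{(P-cQ)(x,y)}-\Ord{Q(x,y)}=n$, both orders being the common value $\ord{\F}(\varphi)$, which is controlled by the support functions of $\N(P-cQ)$ and $\N(Q)$ evaluated at $(k,l)$ (via Lemma~\ref{lem:Newton-alg}).

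The second step is the decomposition of $\NN^{2}$ along the fan $\E_{c}$ of the Minkowski sum $\N(P-cQ)+\N(Q)$ (Definition~\ref{defEc}). On each cone $C\in\E_{c}$, the pair of faces $(\gamma_{P-cQ}(C),\gamma_{Q}(C))$ is constant, hence the orders $\Ord{(P-cQ)(x,y)}$ and $\Ord{Q(x,y)}$ become explicit linear functions of $(k,l)$ provided that the relevant face polynomials do not vanish at the angular components. I would treat three cases: (a) two-dimensional cones $C$ away from the axes, where both faces are vertices, so the face polynomials are monomials and never vanish on $\Gm^{2}$; by Lemma~\ref{lemmerationalitedescones}(\ref{cone1}) the associated series is rational and its limit reproduces, via Propositions~\ref{conedim2}, the monomial quotient $(P-cQ)_{\gamma}/Q_{\gamma}$ terms, with sign $\eps_{C}$ depending on whether $n>0$ holds on the cone. (b) one-dimensional cones $C=\RR_{>0}\vec{n}_{\gamma}$, where I further split arcs according to whether the product of face polynomials $(P-cQ)_{\gamma_{P-cQ}}\cdot Q_{\gamma_{Q}}$ vanishes at $(\ac{x}^{p},\ac{y}^{q})$: if it does not, Lemma~\ref{lemmerationalitedescones}(\ref{cone2}) yields the complement $\Gm^{2}\setminus\{(P-cQ)_{\gamma}Q_{\gamma}=0\}$ term with $\eps_{C}\in\{-1,0\}$ governed by positivity of $n$ on the ray. (c) the horizontal and vertical cones $C_{h},C_{v}$, where one coordinate is absent; this produces the first two lines of the formula, where the exponents are precisely the differences of the pairings $(\gamma_{\ast}(P-cQ)-\gamma_{\ast}(Q)\mid e_{i})$ and the torus dimension $r_{h},r_{v}$ records whether the relevant variable appears.

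The third step handles the arcs of case (b) where the face polynomial $(P-cQ)_{\gamma}\cdot Q_{\gamma}$ does vanish at $(\ac{x}^{p},\ac{y}^{q})$: such an angular component must be of the form $\ac{y}^{p}=\mu\ac{x}^{q}$ for some root $\mu\in\R_{\gamma_{P-cQ}(C)}\cup\R_{\gamma_{Q}(C)}$, hence after the Newton substitution $\tsigma_{(p,q,\mu)}$ of Definition~\ref{defn:Newton-map-local}, these arcs are in measure-preserving bijection with the arcs contributing to the motivic zeta function of $f_{\tsigma_{(p,q,\mu)}}=[P_{\tsigma_{(p,q,\mu)}}:Q_{\tsigma_{(p,q,\mu)}}]$ at the origin for the same value $c$. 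This is where the recursive term $\sum_{\mu} S_{f_{\tsigma_{(p,q,\mu)}},(0,0),c}$ appears; the change of variables introduces a Jacobian $p\,x_{1}^{p+q-1}$, which combines with the differential form $x^{\nu-1}dx\wedge dy$ to give a form of the same shape on the transformed chart, so that the same theorem applies inductively on the Newton tree.

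The final step is to assemble the four groups of contributions into the rational expression $\left(Z_{\f,\omega,X}^{\delta}(T)\right)_{((0,0),c)}$, verify that $\delta$ can be chosen large enough so that every arc with $\ord{\F}(\varphi)\leq n\delta$ is captured by one of the cases above (this is where the Newton algorithm's base-case termination from Definition~\ref{def:algo-Newton} comes in), and then pass to the limit $T\to\infty$ using the explicit values of $\lim_{T\to\infty}S_{\phi,\eta,C}(T)$ from Lemma~\ref{lemmerationalitedescones}. The main obstacle I anticipate is the bookkeeping of the $\Gm$-action on each stratum, in particular checking that the structural morphism to $\Gm$ given by $\ac{z-c}=\ac{(P-cQ)/Q}$ descends, on each cone $C$, to the class of $(P-cQ)_{\gamma_{P-cQ}(C)}/Q_{\gamma_{Q}(C)}$ with the action $\sigma_{C}$ of Notations~\ref{notation-identification-action}, and the careful identification of the signs $\eps_{C_{h}},\eps_{C_{v}}\in\{-1,0,1\}$ on the boundary cones, where the positivity condition $n>0$ must be checked face-by-face.
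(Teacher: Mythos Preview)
Your proposal is correct and follows essentially the same route as the paper: decompose the zeta function along the fan $\E_c$ by $(\Ord x,\Ord y)$, then on each cone split according to whether the face polynomials $(P-cQ)_{\gamma_{P-cQ}(C)}$ and $Q_{\gamma_Q(C)}$ vanish at the angular components $(\ac{x},\ac{y})$, apply Lemma~\ref{lemmerationalitedescones} to the $(\neq,\neq)$ pieces, and feed the vanishing pieces through the Newton map $\tsigma_{(p,q,\mu)}$ to produce the recursive term; this is exactly the scheme carried out in Propositions~\ref{conedim2}, \ref{conedim1}, \ref{prop:rationalityZeqneq} and \ref{zfautrescas}. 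Two small cosmetic points: the face polynomials are evaluated at $(\ac{x},\ac{y})$, not at $(\ac{x}^{p},\ac{y}^{q})$ (the root condition then reads $\ac{y}^{p}=\mu\,\ac{x}^{q}$); and the termination of the Newton algorithm is not needed for the one-step formula (\ref{formulethmprincipal}) itself, only for iterating it down to the base cases --- the choice of $\delta_0$ comes rather from stabilising the cones $C^{\delta}_{(\neq,\neq)}$ in each proposition.
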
 
\begin{proof}
	We present here the strategy of the proof and refer to section \ref{sectionpreuvethm} for details.
	The rationality result and the independence of the motive $\left(S_{\f, \omega, X}\right)_{((0,0),c)}$ from the differential form $\omega$ (then its equality with $S_{f,(0,0),c}$) are standard and follow from \cite{DenLoe98b, GuiLoeMer06a} (see for instance  \cite[\S 2.5]{carai-infini}).
	Using the Newton algorithm and the strategy of the first author and Veys in \cite{CassouVeys13} (see also \cite{carai-antonio}, \cite{Gui02a} \cite{ArtCasLue05a}), the motivic Milnor fiber $S_{f,(0,0),c}$ is given in terms of Newton polygons of $P-cQ$ and $Q$ and their Newton transforms. Classifying the arcs $(x(t),y(t))$ occurring in the definition of the motivic zeta function by the condition $(\ord{x(t)}, \ord{y(t)}) \in C$ with $C$ a cone of the fan $\E_c$, we decompose (section \ref{sec:decompositionzeta}, formula \ref{zetadecomposition}) the motivic zeta function as 
	\[\left(Z_{\f,\omega,X}^{\delta}(T)\right)_{((0,0),c)} = \sum_{\C \in \E_c} Z_{f-c,\omega,C}^{\delta}(T).\]
	Now, for each cone $C$ in $\E_c$, we classify the arcs $(x(t),y(t))$ that occur by the conditions
	\[(P-cQ)_{\gamma_{P-cQ}(C)}(\ac x, \ac y) \blacktriangle 0 \text{ and } Q_{\gamma_{Q}(C)}(\ac x, \ac y) \blacktriangledown 0\]
	with $(\blacktriangle,\blacktriangledown)\in \{=,\neq\}^2$, which gives the decomposition (formula \ref{decompositiontriangles}) 
	\[Z_{f-c,\omega,C}^{\delta}(T)=\sum_{(\blacktriangle,\blacktriangledown)\in \{=,\neq\}^2}Z_{f-c,\omega,C}^{\delta,(\blacktriangle,\blacktriangledown)}(T).\]
	For each $(\blacktriangle,\blacktriangledown)\in \{=,\neq\}^2$, the rationality and limit of the zeta function 
	$Z_{f-c,\omega,C}^{\delta,(\blacktriangle,\blacktriangledown)}(T)$ is done in the different Propositions \ref{conedim2}, \ref{conedim1}, 
	\ref{prop:rationalityZeqneq} and \ref{zfautrescas}. The well-known idea in the case $(\neq,\neq)$ is the following. Using the definition of the motivic measure, we have
		\[Z_{f-c,\omega,C}^{\delta,(\neq,\neq)}(T) = \motive S_{\C^{\delta}_{(\neq, \neq)}}(T)\]
		for some motive $\motive  \in \M_{\Gm}^{\Gm}$, and $S_{\C^{\delta}_{(\neq, \neq)}}(T)$ a formal series as in Lemma \ref{lemmedescones}
\[
	S_{\C^{\delta}_{(\neq, \neq)}}(T) = \sum_{n\geq 1} \sum_{(\alpha,\beta) \in \C_{(\neq,\neq),n}^{\delta}}\LL^{-\eta(\alpha,\beta)}T^{n}.
\]
Then using Lemma \ref{lemmedescones}, the formal series $S_{\C^{\delta}_{(\neq, \neq)}}(T)$ is rational and 
$\lim\limits_{T \to \infty} S_{\C^{\delta}_{(\neq, \neq)}}(T)=\chi_{c}\big(\C^{\delta}_{(\neq, \neq)}\big)$ is the coefficient $\eps_{C}$ which does not depend on $\delta \geq \delta_0$ for $\delta_0$ large enough. 
The other cases $(=,\neq)$ and $(\neq,=)$ and $(=,=)$ are done using Newton transformations (Propositions \ref{prop:rationalityZeqneq} and \ref{zfautrescas}).
\end{proof}

\begin{rem}\label{remfondamentale} Some remarks:
	\begin{itemize}
		\item See Examples \ref{exemple}, \ref{casdebase1} and \ref{casdebase2}, for computations of the motive $S_{f,(0,0),c}$.
		\item In the case $Q=x^M$ with $M\in \NN_{>0}$ and $c=0$, formula (\ref{formulethmprincipal}) is the computation of the motivic 
			Milnor fiber of $x^{-M}P(x,y)$ at $(0,0)$ in \cite[Theorem 2.22]{carai-infini}.
		\item In a similar way than \cite[section 5]{carai-antonio} and \cite[Corollary 3.12, Corollary 3.25]{carai-infini}, 
			we deduce from formula (\ref{formulethmprincipal}) of Theorem \ref{thmSf00c}, a Kouchnirenko type formula of $\chi_{c}\left(F_{\x,c} \right)$. This formula uses \cite[Proposition 2.4]{carai-infini} and with a similar proof, the following result.
			Let $(p,q) \in (\ZZ_{>0})^2$ coprime, and two quasi-homogeneous polynomials in $\k[x,y]$ 
			\[
				P_1(x,y)=x^{a_1}y^{b_1}\prod_{i \in \setI_1}\left(y^{p}-\mu_i^{(1)}x^q\right)^{\nu_i^{(1)}},\:
				P_2(x,y)=x^{a_2}y^{b_2}\prod_{i \in \setI_2}\left(y^{p}-\mu_i^{(2)}x^q\right)^{\nu_i^{(2)}}
			\]
			with $(a_1,b_1,a_2,b_2)\in \NN^4$, $\left(\nu_i^{(1)}\right)\in \left(\NN_{\geq 1}\right)^{\abs{\setI_1}}$, 
			$\left(\nu_i^{(2)}\right)\in \left(\NN_{\geq 0}\right)^{\abs{\setI_2}}$, all the $\mu_i^{(1)}\in \k$ and $\mu_i^{(2)}\in \k$ are distinct.  
			We denote by $r = \abs{\setI_1}+\abs{\setI_2}$.
			We have 
			\[
				\Eu\left((P_1/P_2)^{-1}(1)\cap \Gm^2 \setminus \{P_1.P_2=0\}\right) = 
				-r \abs{\frac{2\mS_1}{\sum_{i\in \setI_1}\nu_i^{(1)}} - \frac{2\mS_2}{\sum_{i\in \setI_2}\nu_i^{(2)}}}
			\]
			with for $j \in \{1,2\}$, $\mS_j$ the area of the triangle with vertices $(0,0)$, 
			${(a_j+q\sum_{i\in \setI_j}\nu_i^{(j)},b_j)}$ and $(a_j,b_j+p\sum_{i\in \setI_j}\nu_i^{(j)})$.
		\item As in \cite[Proposition 11]{carai-antonio} we can also deduce from formula (\ref{formulethmprincipal}) an expression of the monodromy zeta function of $F_{\x,c}$.
	\end{itemize}
\end{rem}

\section{Comparison of bifurcation sets} \label{section:comparisonbifsets}
Let $(P,Q) \in \k[x,y]^2 \setminus \{\k[x]^2\cup \k[y]^2\}$, $f=[P:Q]$ and $I$ its indeterminacy locus in section \ref{setting-cas-local}.  

\begin{definition} Let $\bc$ be an indeterminate. Assume $P(0,0)=Q(0,0)=0$.
	\begin{itemize}
		\item  Let $\Sigma$ be a composition of Newton transformations. A \emph{dicritical face of $(P,Q)$ at $(0,0)$ for $\Sigma$} is a face $\gamma$ of $\N\left((P-\bc Q)_{\Sigma}\right)$, such that the face polynomial $\left((P-\bc Q)_{\Sigma}\right)_{\gamma}$ depends on $\bc$.
		\item  A \emph{dicritical face} of $(P,Q)$ at $(0,0)$ is a dicritical face for some such $\Sigma$.
	\end{itemize}
\end{definition}

\begin{notation}
 Let $\Sigma$ be a composition of Newton transformations. 
 Let $\vertex$ be a vertex of $\N\left((P-\bc Q)_{\Sigma}\right)$. 
 We consider the coefficient $c_{(P-\bc Q)_{\Sigma}}(\vertex)=c_{P_\Sigma}(\vertex)-\bc c_{Q_\Sigma}(\vertex)$.
 We say that $c_0$ cancels the coefficient of the vertex $\vertex$ or simply the vertex $\vertex$ if $c_{(P-c_0 Q)_{\Sigma}}(\vertex)=0$.
\end{notation}

\begin{notation}
	Let $\gamma$ be a one dimensional dicritical face with primitive normal vector $(p,q)$. 
		Denote by $V_{\gamma}$ the set of $c \in \AA^1$ which do not cancel the vertices of $\gamma$. 
	For any $c \in V_{\gamma}$ we denote by $R_{\gamma,c}$ the set of roots of the face polynomial $(P-cQ)_{\gamma}$. 
	For any $\mu \in R_{\gamma,c}$, we denote by $\nu_{c}(\mu)$ its multiplicity as a root of $(P-cQ)_{\gamma}$.
	We denote by $R_{V_\gamma}$ the intersection $\cap_{c \in V_\gamma} R_{\gamma,c}$ and for any $\mu \in R_{V_\gamma}$, we denote by $\nu(\mu)=\min\{ \nu_{c}(\mu) \mid c \in V_\gamma\}$. We denote by $\overline{(P-\bc Q)_{\gamma}}$ the polynomial 
	$(P-\bc Q)_{\gamma}/\prod_{ \mu \in R_{V_\gamma}}(y^p-\mu x^q)^{\nu(\mu)-1}$.
\end{notation}

\begin{definition} \label{Newtongen} 
	A value $c_0 \in \k$ is called \emph{Newton non-generic} at $(0,0)$ if there is a composition of Newton transformations $\Sigma$, such that one of the following conditions is satisfied:
	\begin{itemize}
		\item 
			there is a dicritical vertex $\vertexw$ of $(P-\bc Q)_\Sigma$, not contained in a coordinate axis  such that $c_0$ cancels $\vertexw$.
		\item 
			there is a dicritical vertex $\vertexw$ of $(P-\bc Q)_\Sigma$, contained in a coordinate axis, such that $c_0$ cancels  $\vertexw$ and either, the adjacent dimension 1 face $\gamma$ to $\vertexw$ is non-smooth, either the face $\gamma$ is smooth but the vertex $\vertex$ (in Definition \ref{def:smooth}) is cancelled by $c_0$.
		\item 
			there is a dicritical one dimensional face $\gamma$, such that $c_0$ does not cancel the vertices of $\gamma$ but cancels the discriminant of $\overline{(P-\bc Q)_{\gamma}}$: there exists $\mu \in R_{V_\gamma}$ such that $\nu_{c_0}(\mu)>\nu(\mu)$ or 
				there exists $\mu \in R_{\gamma,c_0}\setminus R_{V_\gamma}$ such that $\nu_{c_0}(\mu)>1$.
	\end{itemize}
	A value $c_0 \in \k$ is said to be \emph{Newton generic} at $(0,0)$ for $(P,Q)$ if it is not Newton non-generic.
	The set of Newton non-generic values $\B_{f,(0,0)}^{\Newton}$ is called \emph{Newton bifurcation set of $f$ at $(0,0)$}.
\end{definition}

\begin{rem} 
	For any $\x \in I$ we define by translation the Newton bifurcation set $\B_{f,\x}^{\Newton}$.
\end{rem}

\begin{prop} 
	For any $\x \in I$, the set $\B_{f,\x}^{\Newton}$ is finite. 
\end{prop}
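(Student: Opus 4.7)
The plan is to translate to $\x = (0, 0)$ and show that the compositions $\Sigma$ of Newton transformations that can make some value $c_0$ Newton non-generic form a finite set, each contributing only finitely many $c_0$. The first observation is that since $\mu \in \Gm$ in Definition \ref{defn:Newton-map-local} is independent of $\bc$, every $\tsigma_{(p,q,\mu)}$ extends to a $\k[\bc]$-algebra morphism, so for every composition $\Sigma$ one has
\[(P - \bc Q)_{\Sigma} = P_{\Sigma} - \bc Q_{\Sigma} \quad \text{in } \k[\bc][x_1, y_1].\]
Hence the generic support of $(P - \bc Q)_\Sigma$ over $\k[\bc]$ is $\Supp(P_\Sigma)\cup \Supp(Q_\Sigma)$; a vertex $\vertex$ is dicritical iff $c_{Q_\Sigma}(\vertex) \neq 0$; a one-dimensional face $\gamma$ is dicritical iff $Q_{\Sigma,\gamma} \neq 0$; and the only $\mu \in \k^*$ that are roots of the face polynomial $P_{\Sigma,\gamma} - \bc Q_{\Sigma,\gamma}$ for every value of $\bc$ are the common roots of $P_{\Sigma,\gamma}$ and $Q_{\Sigma,\gamma}$.

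The main step is to introduce a finite tree $T$ of \emph{admissible} compositions, in the spirit of the Newton algorithm for an ideal of $\k[[x,y]]$ of \cite{CassouVeys13, Cassou-Nogues-Veys-15}: at each node $\Sigma$ one only follows Newton transforms $\tsigma_{(p,q,\mu)}$ such that $(p,q)$ is the primitive normal to a one-dimensional face $\gamma$ of the generic Newton polygon of $(P-\bc Q)_\Sigma$ and $\mu\in \k^*$ is a common root of $P_{\Sigma,\gamma}$ and $Q_{\Sigma,\gamma}$. Finiteness of $T$ follows from the termination of the Newton algorithm for any fixed polynomial (Definition \ref{def:algo-Newton}) applied to $P$ and $Q$ simultaneously. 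I would then verify, using Lemma \ref{lem:Newton-alg}(1)-(2), that once $\Sigma$ leaves $T$ by a first non-admissible step, $(P-\bc Q)_\Sigma$ becomes a monomial times a unit $x_1^N\,\unit(x_1,y_1)$ over $\k[\bc]$, and all subsequent Newton transforms preserve this shape. The associated Newton polygon then consists of a single vertex on a coordinate axis with no adjacent one-dimensional face, so none of Cases 1, 2, or 3 of Definition \ref{Newtongen} can be triggered.

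Finally, each of the finitely many $\Sigma \in T$ contributes only finitely many Newton non-generic values: each dicritical vertex $\vertexw$ is cancelled by at most one $c_0$ (the root of the linear form $c_{P_\Sigma}(\vertexw) - \bc c_{Q_\Sigma}(\vertexw)$ in $\bc$), and each dicritical one-dimensional face $\gamma$ contributes the finitely many roots in $\bc$ of the discriminant of the non-zero polynomial $\overline{(P-\bc Q)_\gamma}$. Taking the union over all $\Sigma \in T$ and all dicritical faces/vertices gives finiteness of $\B_{f,(0,0)}^{\Newton}$. The main obstacle is the previous step: rigorously confirming via Lemma \ref{lem:Newton-alg} that divergent branches (monomial-times-unit forms) cannot trigger any condition of Definition \ref{Newtongen}, in particular the boundary case in Case 2 where the smooth-face vertex $\vertex$ of Definition \ref{def:smooth} lies on an axis.
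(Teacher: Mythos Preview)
Your proposal is correct and follows essentially the same route as the paper's own proof, which is a single sentence: ``This follows from the definition of Newton non-generic values because the Newton algorithm has finitely many steps and the discriminants have finitely many roots.'' Your finite tree $T$ is precisely the tree produced by the Newton algorithm applied to $P-\bc Q$ (equivalently to the pair $(P,Q)$ as in \cite{CassouVeys13, Cassou-Nogues-Veys-15}), and your observation that each dicritical vertex contributes at most one value while each discriminant contributes finitely many is exactly the content of the paper's sentence.

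The one place where you go beyond the paper is your treatment of compositions $\Sigma$ lying \emph{outside} $T$. The paper implicitly reads Definition~\ref{Newtongen} as ranging only over the $\Sigma$ produced by the Newton algorithm of $(P-\bc Q)$, so this issue does not arise for them. Your more literal reading---allowing arbitrary compositions---forces the extra verification that a non-admissible step yields $x_1^N\unit(x_1,y_1)$ over $\k[\bc]$, whose generic Newton polygon is the single vertex $(N,0)$ on a coordinate axis with no adjacent one-dimensional face, and hence triggers none of Cases 1--3. This argument is sound (Lemma~\ref{lem:Newton-alg} extends verbatim to coefficients in $\k[\bc]$, and the monomial-times-unit shape is preserved under further transforms since $\unit'(0,0)=\unit(0,0)\neq 0$ in $\k[\bc]$), so your ``main obstacle'' is not a genuine obstacle---but it is also not needed under the paper's intended reading.
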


\begin{proof} 
	This follows from the definition of Newton non-generic values because the Newton algorithm has finitely many steps and the discriminants have finitely many roots. 
\end{proof}

\begin{thm}\label{thmBfNewtonBftop}
	Assume the curves $P=0$ and $Q=0$ do not have a common irreducible component. Let $\x \in I$.
	If for any $(a,b)\in \CC^2$, $\x$ is smooth or an isolated critical point of $aP+bQ$, then 
	\[\B_{f,\x}^{\Newton} = \B_{f,\x}^{\top}.\]
\end{thm}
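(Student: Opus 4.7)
The plan is to combine Theorem \ref{criterebfx} and Theorem \ref{realisationchic}: a value $c \in \B_{f,\x}^{\top}$ if and only if $\widetilde{\chi_c}\!\left(S_{f,\x,c}^{(1)}\right)=\chi_c(F_{\x,c})\neq 0$. Up to translation we may assume $\x=(0,0)$, and it then suffices to show
\[
c \in \B_{f,(0,0)}^{\Newton} \iff \widetilde{\chi_c}\!\left(S_{f,(0,0),c}^{(1)}\right) \neq 0.
\]
The strategy is to apply the Euler characteristic with compact support to the inductive formula \eqref{formulethmprincipal} and to prove the equivalence by induction on the depth of the Newton algorithm of $P-\bc Q$, whose termination is ensured by Lemma \ref{lem:Newton-alg}.

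In the resulting expansion most terms vanish for trivial Euler-characteristic reasons. A cone $C$ of dimension $2$ distinct from $C_h$ and $C_v$ contributes a motive whose structural map factors through the torus $\Gm^{2}$, so its Euler characteristic is zero. A cone $C$ of dimension $1$ contributes the Euler characteristic of $\{(P-cQ)_{\gamma_{P-cQ}(C)} = Q_{\gamma_{Q}(C)}\} \subset \Gm^2$ minus zero loci, computed by the Kouchnirenko-type formula recalled in Remark \ref{remfondamentale}; this contribution is non-zero exactly when the quasi-homogeneous polynomial $(P-cQ)_{\gamma_{P-cQ}(C)}/Q_{\gamma_{Q}(C)}$ acquires extra multiple factors, that is when $c$ cancels the discriminant of $\overline{(P-\bc Q)_\gamma}$ (third clause of Definition \ref{Newtongen}). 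The corner contributions from $C_h$ and $C_v$ are non-zero only when $r_h=1$ or $r_v=1$, which happens precisely when $c$ cancels a dicritical vertex on the horizontal or vertical axis under the constraints of Definition \ref{def:smooth}, matching the second clause of Definition \ref{Newtongen}. The first clause, cancellation of a dicritical vertex off the coordinate axes, modifies the combinatorial shape of $\N(P-cQ)$ and produces an extra one-dimensional cone whose contribution is non-zero by the preceding analysis. The remaining recursive terms $S_{f_{\tsigma_{(p,q,\mu)}},(0,0),c}^{(1)}$ fall under the induction hypothesis: they contribute a non-zero Euler characteristic iff $c$ is Newton non-generic for the transformed pair $(P_{\tsigma},Q_{\tsigma})$, which by construction of the Newton algorithm is equivalent to $c$ firing a non-generic condition at some deeper node of the Newton tree.

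The hypothesis that $\x$ is smooth or an isolated critical point of every member of the pencil, together with the absence of a common irreducible component of $P=0$ and $Q=0$, is used crucially through Theorem \ref{criterebfx} to rewrite $\chi_c(F_{\x,c})$ as the Milnor-number drop $\nMilnor(P-c_{\gen}Q,\x)-\nMilnor(P-cQ,\x)$, which is non-positive for every $c$ since $\nMilnor(P-cQ,\x)$ attains its minimum on a dense open set and only jumps upwards at atypical values. The main obstacle is to lift this global non-positivity down to each individual contribution in the expansion so that summands coming from different cones and from different nodes of the Newton tree cannot cancel each other. This is carried out by a sign analysis using Lemma \ref{lemmerationalitedescones}: each non-trivial contribution is itself a Milnor-number drop localised on a single branch of the Newton tree and hence non-positive, which rules out accidental cancellations and yields the equality $\B_{f,\x}^{\Newton}=\B_{f,\x}^{\top}$.
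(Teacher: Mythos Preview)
Your approach diverges substantially from the paper's and contains a genuine gap in the hard inclusion $\B_{f,\x}^{\Newton}\subset\B_{f,\x}^{\top}$.

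The paper does not compute $\chi_c(F_{\x,c})$ term by term. For the inclusion $\B_{f,\x}^{\top}\subset\B_{f,\x}^{\Newton}$ it observes that two values $c_1,c_2\notin\B_{f,\x}^{\Newton}$ yield identical Newton algorithms for $P-c_1Q$ and $P-c_2Q$, hence equal Milnor numbers by the formula of \cite[Theorem~5]{carai-antonio}, and then invokes Theorem~\ref{criterebfx}. For the reverse inclusion it takes $c_0\notin\B_{f,\x}^{\top}$, picks a nearby Newton-generic $c$, and uses constancy of $\mu$ along the segment to invoke the L\^e--Ramanujam/Parusi\'nski equisingularity theorem: $P-c_0Q$ and $P-cQ$ then share the same Puiseux characteristic exponents, which forces their Newton polygons (up to smooth faces) and face-polynomial root multiplicities to agree at every step of the algorithm, so $c_0\notin\B_{f,\x}^{\Newton}$. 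No sign analysis of the recursive motivic formula is needed.

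Your sign argument, by contrast, is not merely incomplete but demonstrably false. You assert that ``each non-trivial contribution is itself a Milnor-number drop localised on a single branch of the Newton tree and hence non-positive''. Look at Example~\ref{exemple} in the paper for $c=-4$: the expansion reads $\tilde{\chi_c}\bigl(S_{f,(0,0),-4}^{(1)}\bigr)=1+0-2+0=-1$, with a strictly positive summand $+1$ coming from the corner term $[x_1\colon\Gm\to\Gm]$. Similar positive contributions appear for $c=0$ and $c=1/2$. So individual summands in~\eqref{formulethmprincipal} are not sign-definite, and Lemma~\ref{lemmerationalitedescones} (a purely formal rationality statement about cone sums) gives no mechanism to rule out cancellations. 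Without uniform signs you cannot conclude that a single Newton-non-generic condition forces $\chi_c(F_{\x,c})\neq0$; this is precisely why the paper imports the equisingularity theorem instead.

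A secondary issue: your matching of the three clauses of Definition~\ref{Newtongen} to specific summands is too coarse. For instance, cancellation of a dicritical vertex on an axis is handled in the paper (proof of Theorem~\ref{thmegaliteBf}, case~(1b)) by a delicate three-term compensation~\eqref{compensationintermediaire} involving a corner term, a face term, and a recursive term that together sum to zero; it is not captured by looking at $C_h$ or $C_v$ alone.
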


\begin{proof} 
	By translation, we can assume that $\x=(0,0)$. 

	We prove first that $\B_{f,(0,0)}^{\top} \subset \B_{f,(0,0)}^{\Newton}$. Indeed, if $c_1$ and $c_2$ do not belong to 
	$\B_{f,(0,0)}^{\Newton}$, the Newton algorithms of $P-c_1Q$ and $P-c_2Q$ are the same, then we obtain 
	\[\nMilnor(P-c_1Q,(0,0))=\nMilnor(P-c_2Q,(0,0))\]
	because by \cite[Theorem 5]{carai-antonio}, we have\footnote{Correcting a misprint in the formula of \cite[Theorem 5]{carai-antonio}.}
	for any $c\in \AA^1$, 
	\begin{equation} \label{formulenM} 
		\nMilnor(P-cQ,(0,0)) = 1+2\mS_{\Delta(P-cQ),P-cQ} -\eps_{b_h}a_h- \eps_{a_v}b_v 
		- \sum_{\sigma} \tilde{\chi}_c\left(\left(S_{(P-cQ)_{\sigma}}\right)_{(0,0)}^{(1)}\right)
	\end{equation}
	as a consequence of 
	\begin{equation} 
		\tilde{\chi}_{c}\left(S_{(P-cQ),(0,0)}^{(1)}\right) = -2\mS_{\Delta(P-cQ),P-cQ} + \eps_{b_h}a_h + \eps_{a_v}b_v 
		+ \sum_{\sigma} \tilde{\chi}_c\left(\left(S_{(P-cQ)_{\sigma}}\right)_{(0,0)}^{(1)}\right)
	\end{equation}
        where 
	\begin{itemize}
		\item the summation is taken over all possible Newton maps associated to $\N(P-cQ)$, 
		\item $\gamma_h=(a_h,b_h)$, $\gamma_v=(a_v,b_v)$ are the horizontal and vertical faces, 
		\item $\eps_{b_{h}}=0$ if $b_h \neq 0$ and $\eps_{b_{h}}=1$ if $b_h =0$, $\eps_{a_{v}}=0$ if $a_v \neq 0$ and $\eps_{a_{v}}=1$ if $a_v=0$,
		\item $\mS_{\Delta(P-cQ),P-cQ}$ is the area of $\Delta(P-cQ)$ relatively to $P-cQ$ defined in \cite{carai-antonio}.
	\end{itemize}
	The set $\B_{f,(0,0)}^{\Newton} \cup \B_{f,(0,0)}^{\top}$ is finite, and on its complement, by Theorem \ref{criterebfx},
	the function {${c\mapsto \nMilnor(P-c Q,(0,0))}$} is constant equal to $\nMilnor(P-c_{\gen}Q,(0,0))$. 
	If $c \notin \B_{f,(0,0)}^{\Newton}$ we deduce that 
		\[\nMilnor(P-c_{\gen}Q,(0,0)) = \nMilnor(P-cQ,(0,0))\]
	and by Theorem \ref{criterebfx}, we have $c \notin \B_{f,(0,0)}^{\top}$
	implying the result. We do the same for the value $\infty$ working with the family $Q-cP$, with $c\in \AA^1$.

	We prove now that $\B_{f,(0,0)}^{\Newton} \subset \B_{f,(0,0)}^{\top}$. Let $c_0 \notin \B_{f,(0,0)}^{\top}$. We prove that $c_0 \notin \B_{f,(0,0)}^{\Newton}$. 

        As $\B_{f,(0,0)}^{\top}$ is finite, there is a ball $B(c_0,r)$ such that $B(c_0,r)\cap \B_{f,(0,0)}^{\top} = \emptyset$. As $\B_{f,(0,0)}^{\Newton}$ is finite, there is $c \in B(c_0,r)$ such that for any $t\in ]0,1]$, $c_t:=(1-t)c_0+tc \notin \B_{f,(0,0)}^{\Newton}$. 
	By construction, for any $t \in [0,1]$, $c_t \notin \B_{f,(0,0)}^{\top}$, so the application $t \mapsto \nMilnor(P-c_tQ,(0,0))$ is constant on $[0,1]$, then by \cite{LeRamanujam, Par99a} (or \cite{Wall}[p151, Notes 6.6 and p86]), $P-c_0Q$ and $P-cQ$ are equisingular and have the same Puiseux characteristic exponents. In particular they have the same multiplicity $m$.
	
		We have $\Delta(P-c_0Q)\subset \Delta(P-\bc Q)$. As $c \notin \B_{f,(0,0)}^{\Newton}$, it does not cancel coefficients of vertices not in the axes, and up to smooth faces, the one dimensional faces of $P-cQ$ are those of $P-\bc Q$. So, up to smooth faces, $\N(P-cQ)$ is under $\N(P-c_0Q)$.

		We prove in fact that, up to smooth faces, $\N(P-cQ)$ and $\N(P-c_0Q)$ are equal. Indeed, by definition of $m$
		there is a face $\gamma_{c,m}$ of $\N(P-cQ)$ and a face $\gamma_{c_0,m}$ of $\N(P-c_0Q)$, together contained in the line $x+y=m$. 
		As $\N(P-cQ)$ is under $\N(P-c_0Q)$ and, $\Delta(P-cQ)$ and $\Delta(P-c_0Q)$ are convex, we have $\gamma_{c_0,m}\subset \gamma_{c,m}$. 
		We write the face polynomial
		\[
			(P-cQ)_{\gamma_{c,m}} = x^{A}y^{B}\prod_{i=1}^{r}(y-\mu_i x)^{\nu_i}\sum_{j=0}^{n}(a_j(P)+ca_j(Q))x^jy^{n-j}.
		\]
		If for any $j \in [\![0\:;n]\!]$, $a_j(Q)=0$, then we have $\gamma_{c,m}=\gamma_{c_0,m}$. 
		Otherwise, as $c \notin \B_{f,(0,0)}^{\Newton}$, the polynomial $\sum_{j=0}^{n}(a_j(P)+ca_j(Q))x^jy^{n-j}$ has $n$ distinct roots $\lambda_i$ different from the $\mu_i$.
		Then, by equisingularity, and parametrization of branches of the curves $P-cQ=0$ and $P-c_0Q=0$, we obtain that the polynomial
		\[\sum_{j=0}^{n}(a_j(P)+ca_j(Q))x^jy^{n-j}\] should have $n$ distinct roots, implying that $\gamma_{c,m}=\gamma_{c_0,m}$.
		Similarly, we prove the equality of the other faces because, as the Zariski characteristic pairs trees\footnote{In the Newton algorithm of $P-cQ$ in section~\ref{section:Newton-algorithm-local}, the tree of primitive normal vectors $(p_i,q_i)$ is called Zariski characteristic pairs tree of $P-cQ$ and up to vectors $(1,q)$ or $(p,1)$ of smooth faces, it is equivalent to the data of Puiseux characteristic exponents of $P-cQ$ by \cite[Notes 3.7]{Wall}.}of $P-cQ$ and $P-c_0Q$ are equal, up to smooth faces, $\N(P-cQ)$ and $\N(P-c_0Q)$ are  parallel (meaning faces are parallel two by two).
			
		We fix a face $\gamma$ of $\N(P-cQ)$ supported by a line $ap+bq=N$ with $(p,q)$ coprime integers. 
		Applying a Newton transform $\tsigma$, we have $(P-c_0 Q)_{\tsigma}=x_1^N (P-c_0Q)'$ and $(P-cQ)_{\tsigma} = x_1^N(P-cQ)'$.
		As $P-c_0Q$ and $P-cQ$ are equisingular, $(P-c_0Q)'$ and $(P-cQ)'$ are equisingular and (up to normal vectors of smooth faces) they have same Zariski characteristic pairs and same multiplicity, then as before they have the same Newton polygon up to smooth faces.
		We conclude that the face polynomials $(P-c_0Q)_{\gamma}$ and $(P-cQ)_{\gamma}$ have the same type: same common roots with same multiplicities and other (non common) roots have multiplicity one.

		By induction, this implies that $c_0 \notin \B_{f,(0,0)}^{\Newton}$.
	The base cases are immediate. 
\end{proof}

\begin{definition}[{\cite[\S 4.1.2]{Raibaut-fractions}}] \label{Bfmot}
	A value $c$ is called \emph{motivically atypical for $f$ at $\x \in I$} if $S_{f,\x,c}\neq 0$.
	The set of motivically atypical values $\B_{f,\x}^{\mot}$ is called the \emph{motivic bifurcation set of $f$ at $\x$}.
\end{definition}

\begin{theorem}[{\cite[Theorem 7]{Raibaut-fractions}}] \label{bfx}
For all $\x\in I$, the motivic bifurcation set $\B_{f,\x}^{\mot}$ is finite.
\end{theorem}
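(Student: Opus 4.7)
The plan is to combine the inductive formula \eqref{formulethmprincipal} of Theorem~\ref{thmSf00c} with the finite termination of the Newton algorithm to exhibit a finite subset $B\subset\PP^1_\k$ outside of which $S_{f,\x,c}=0$. By translation, we may assume $\x=(0,0)$.

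Consider the polynomial $P-\bc Q\in\k[\bc][x,y]$: the Newton polygon $\N(P-\bc Q)$ depends only on $\Supp(P)\cup\Supp(Q)$, while its vertex coefficients and face polynomials are affine in $\bc$. Following Definition~\ref{Newtongen}, let $B_0\subset\AA^1_\k$ be the finite set of values $c$ at which either some vertex coefficient of $P-cQ$ vanishes, or some face polynomial of $P-cQ$ along a dicritical face has non-simple roots, or some relevant discriminant vanishes. For $c\notin B_0$, the Newton polygon, the face polynomials, the fan $\E_c$ and the associated multiplicity data all coincide with their $\bc$-generic versions. Iterating through every node of the Newton algorithm of $(P-\bc Q,Q)$, which has finite depth by Lemma~\ref{lem:Newton-alg}, we collect analogous finitely many non-generic values at each step and take their finite union $B\supseteq B_0$.

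For $c\notin B$, the full expansion of formula \eqref{formulethmprincipal} through every Newton transformation expresses $S_{f,\x,c}$ as a fixed finite sum of classes in $\M_{\Gm}^{\Gm}$, each attached to a face-polynomial ratio $(P-cQ)_{\gamma_{P-cQ}(C)}/Q_{\gamma_Q(C)}$ equipped with the structural map $\ac{z-c}$ and the $\Gm$-action $\sigma_C$. The isomorphism class of each summand depends on $c$ only through the algebraic variation of nonzero coefficients of these ratios, so the map $c\mapsto S_{f,\x,c}$ is constant on the connected set $\AA^1_\k\setminus B$. The common value is zero: this can be seen either by a recursive cancellation of the $\eps_{C_h},\eps_{C_v},\eps_C$ contributions against the Newton-transform terms -- in the spirit of the identity \eqref{formulenM} used in the proof of Theorem~\ref{thmBfNewtonBftop} -- or by choosing a log-resolution $h\colon Y\to\X$ of the pair $\bigl(\X,\F+\sum_{c_i\in B}\f^{-1}(c_i)\bigr)$ and applying the Denef--Loeser formula for motivic nearby cycles: for $c\notin B$, the strict transform of $\f^{-1}(c)$ is smooth and transverse to $h^{-1}(\F)$ above $\x$, so the finite sum computing $S_{f,\x,c}$ on $Y$ has no stratum above $\x$ and is empty.

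The main obstacle is the final vanishing step: upgrading the Euler-characteristic identity $\chi_c(F_{\x,c})=\nMilnor(P-c_\gen Q,(0,0))-\nMilnor(P-cQ,(0,0))=0$ (valid for Newton-generic $c$ by Theorem~\ref{criterebfx}) to a vanishing of the motivic class in $\M_{\Gm}^{\Gm}$ itself. The log-resolution approach circumvents this cleanly, provided one checks invariance of the computation under the choice of resolution, which is standard. The value $c=\infty$ is handled symmetrically by exchanging the roles of $P$ and $Q$, thereby showing that $\B_{f,\x}^{\mot}$ is contained in a finite subset of $\PP^1_\k$.
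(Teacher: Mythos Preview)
This theorem is not proved in the present paper: it is quoted from \cite{Raibaut-fractions}, and the argument there is precisely the log-resolution route you outline at the end. One takes a log-resolution $h\colon Y\to\X$ of $(\X,\F)$; above $\x$ there are finitely many exceptional components, on each of which $\f\circ h$ is either constant or dominates $\PP^1_\k$; for $c$ avoiding the finitely many constant values (and finitely many further critical values), the Denef--Loeser expression for $S_{f,\x,c}$ has no contributing stratum above $(\x,c)$ and vanishes. So your second approach is the correct and intended one, and it does not rely on Theorem~\ref{thmSf00c} at all.

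Your first approach, through the Newton algorithm and formula~\eqref{formulethmprincipal}, has real gaps. The assertion that $c\mapsto S_{f,\x,c}$ is constant on $\AA^1_\k\setminus B$ because the face-polynomial coefficients ``vary algebraically'' is not justified: classes in $\M_{\Gm}^{\Gm}$ are not continuous invariants, and you would need explicit equivariant isomorphisms over $\Gm$ between the summands for different $c$ --- but the structural map to $\Gm$ genuinely depends on $c$, so this is not automatic. (The phrase ``connected set $\AA^1_\k\setminus B$'' also betrays an implicit complex-analytic picture that is unavailable for general algebraically closed $\k$.) As you yourself flag, the Euler-characteristic identity from Theorem~\ref{criterebfx} assumes $\k=\CC$ and isolated singularities, neither of which is hypothesised here, and $\chi_c=0$ does not force motivic vanishing. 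The recursive cancellation you allude to is exactly what the paper carries out in the proof of Theorem~\ref{thmegaliteBf} (the inclusion $\B_{f,\x}^{\mot}\subset\B_{f,\x}^{\Newton}$), but that argument is stated under the additional hypotheses of that theorem and is not available in the bare generality of Theorem~\ref{bfx}.
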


\begin{theorem} \label{thmegaliteBf}
	Assume the curves $P=0$ and $Q=0$ do not have a common irreducible component. Let $\x \in I$. If for any $(a,b)\in \CC^2$, 
	$\x$ is smooth or an isolated critical point of $aP+bQ$, then 
	\[\B_{f,\x}^{\top} = \B_{f,\x}^{\mot} = \B_{f,\x}^{\Newton}.\]
\end{theorem}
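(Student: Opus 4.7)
By Theorem \ref{thmBfNewtonBftop} the equality $\B_{f,\x}^{\Newton}=\B_{f,\x}^{\top}$ already holds under the hypotheses, so the task reduces to proving the two inclusions $\B_{f,\x}^{\top}\subseteq\B_{f,\x}^{\mot}$ and $\B_{f,\x}^{\mot}\subseteq\B_{f,\x}^{\Newton}$. The plan is to obtain the first from the Euler characteristic realization of the motivic Milnor fiber, and the second from the explicit formula of Theorem \ref{thmSf00c} by induction on the depth of the Newton algorithm.

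For $\B_{f,\x}^{\top}\subseteq\B_{f,\x}^{\mot}$, I would fix $c\in\B_{f,\x}^{\top}$. Under the standing smoothness/isolated-singularity and no-common-component hypotheses, Theorem \ref{criterebfx} gives $\chi_c(F_{\x,c})\neq 0$ for $c\in\AA^1_{\CC}$. The realization statement of Theorem \ref{realisationMHMlocal} then yields $\widetilde{\chi_c}(S_{f,\x,c}^{(1)})=\chi_c(F_{\x,c})\neq 0$, forcing $S_{f,\x,c}\neq 0$ and $c\in\B_{f,\x}^{\mot}$. The case $c=\infty$ reduces to the previous one by applying the same argument to the rational function $Q/P$, consistently with the extension of Definition \ref{defEc} to $c=\infty$.

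For $\B_{f,\x}^{\mot}\subseteq\B_{f,\x}^{\Newton}$, the goal is to show $S_{f,\x,c}=0$ for every Newton generic $c$. After translating $\x$ to $(0,0)$, I would argue by induction on the depth of the Newton algorithm applied to $(P-\bc Q,Q)$, which is finite thanks to the isolated critical point assumption on every pencil member $aP+bQ$. Formula (\ref{formulethmprincipal}) decomposes $S_{f,(0,0),c}$ into (i) boundary contributions from the horizontal and vertical cones $C_h,C_v$, (ii) interior contributions from the other two- and one-dimensional cones of $\E_c$, and (iii) recursive contributions $S_{f_{\tsigma_{(p,q,\mu)}},(0,0),c}$ indexed by the roots $\mu$ of the face polynomials along dicritical cones. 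In the inductive step the recursive terms in (iii) vanish: each such $\mu$ is a root either of the generic factor $\overline{(P-\bc Q)_\gamma}$ or of $Q_\gamma$, and by an equisingularity-preservation argument parallel to the one in the proof of Theorem \ref{thmBfNewtonBftop}, the value $c$ remains Newton generic for the transformed pair $(P_{\tsigma_{(p,q,\mu)}}-cQ_{\tsigma_{(p,q,\mu)}},Q_{\tsigma_{(p,q,\mu)}})$ at the origin, so the inductive hypothesis kills the term.

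The main obstacle is to show that the non-recursive part of formula (\ref{formulethmprincipal}) also vanishes for Newton generic $c$. The plan is to argue that this non-recursive part is locally constant in $c$ on the complement of $\B_{f,\x}^{\Newton}$: on non-dicritical faces the face polynomials do not involve $c$, while on dicritical faces Newton genericity forces the same combinatorial type of factorisation, so the classes $[(P-cQ)_{\gamma_{P-cQ}(C)}/Q_{\gamma_Q(C)}\colon \Gm^2\to\Gm,\sigma_C]$ and the coefficients $\eps_{C_h},\eps_{C_v},\eps_{C}$ coincide with those for any other Newton generic value. Combining this with the vanishing $\widetilde{\chi_c}(S_{f,(0,0),c}^{(1)})=\chi_c(F_{(0,0),c})=0$ (which holds for Newton generic $c$ by Theorem \ref{criterebfx} and (\ref{caracteristiqueEulerfibredeMilnor})) and the explicit quasi-homogeneous structure of the summands provided by the Kouchnirenko-type identity of Remark \ref{remfondamentale}, the Euler vanishing should be promoted to the full motivic vanishing, closing the induction.
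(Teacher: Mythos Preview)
Your first two steps (reducing to $\B^{\top}\subseteq\B^{\mot}$ and $\B^{\mot}\subseteq\B^{\Newton}$ via Theorem~\ref{thmBfNewtonBftop}, and proving the first inclusion through the Euler realization of Theorem~\ref{realisationMHMlocal} together with Theorem~\ref{criterebfx}) match the paper exactly. The inductive structure you set up for the second inclusion is also the right one, and note that the fact ``$c$ remains Newton generic after any Newton transformation $\tsigma$'' is immediate from Definition~\ref{Newtongen} (Newton non-genericity is quantified over \emph{all} compositions $\Sigma$), so no equisingularity argument is needed there.

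The genuine gap is in your treatment of the non-recursive part of formula~(\ref{formulethmprincipal}). Your plan is to observe that this part is constant along the Newton generic locus, and then to ``promote'' the Euler vanishing $\widetilde{\chi_c}(S_{f,(0,0),c}^{(1)})=0$ to the motivic vanishing using Remark~\ref{remfondamentale}. This step is not justified and, as stated, cannot work: the Euler characteristic realization $\M_{\Gm}^{\Gm}\to\ZZ$ is very far from injective, and the Kouchnirenko-type identities of Remark~\ref{remfondamentale} are \emph{numerical} (Euler characteristic) statements, not motivic ones. There is no general mechanism allowing you to conclude that a finite sum of classes $[g\colon\Gm^r\to\Gm,\sigma]$ is zero in $\M_{\Gm}^{\Gm}$ merely from the vanishing of its Euler characteristic.

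The paper avoids this problem entirely by a direct combinatorial analysis: for a Newton generic value $c_0$ one shows, case by case, that every coefficient $\eps_{C}$ in formula~(\ref{formulethmprincipal}) vanishes. This uses the key convexity fact $\Delta'(Q)\subset\Delta'(P-c_0Q)$ (valid precisely because $c_0$ is Newton generic) together with Propositions~\ref{conedim2} and~\ref{conedim1} to force the sign of the inner products $(\omega_i\mid D(C))$. One subtlety you do not address is that $\eps_{C_h}$ (or $\eps_{C_v}$) need not vanish when $c_0$ cancels a boundary dicritical vertex on an axis adjacent to a smooth face; in that case the paper exhibits an explicit cancellation in $\M_{\Gm}^{\Gm}$ (formula~(\ref{compensationintermediaire})) between the $C_h$-term, the associated one-dimensional cone term, and one of the recursive terms $S_{f_{\tsigma_\delta},(0,0),c_0}$. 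The base cases are then handled directly via Examples~\ref{casdebase1} and~\ref{casdebase2}. Your argument would need to be replaced by this explicit computation.
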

\begin{proof} 
	By Theorem \ref{thmBfNewtonBftop}, we have $\B_{f,\x}^{\Newton} = \B_{f,\x}^{\top}$.

	Let $c_0 \in \B_{f,\x}^{\top}$. By Theorem \ref{criterebfx}, $\chi_{c}(F_{\x,c_0})\neq 0$ and by Theorem \ref{realisationchic},
	$S_{f,\x,c_0}^{(1)}\neq 0$ then ${c_0 \in \B_{f,\x}^{\mot}}$. We conclude that $\B_{f,\x}^{\top}\subset \B_{f,\x}^{\mot}$.

	Up to a translation, we assume $\x=(0,0)$ and we prove now that $ \B_{f,\x}^{\mot} \subset \B_{f,\x}^{\Newton}$.
	Let $c_0\notin \B_{f,(0,0)}^{\Newton}$. Let $\bc$ be an indeterminate\footnote{We can assume $c_0\neq \infty$ working with $P-\bc Q$. In the case $c_0 = \infty$, we consider $Q-\bc P$.}. 
	As, by Definition \ref{Newtongen}, for any Newton transformation $\tsigma$, we have $c_0\notin \B_{f_{\tsigma},(0,0)}^{\Newton}$, by induction we will prove that $S_{f,(0,0),c_0}=0$, namely $c_0 \notin \B_{f,(0,0)}^{\mot}$.\\
	
	$\star$ Assume that $(P-c_0Q,Q)$ is a base case. By Remark \ref{rembasecases} we consider two cases. \smallskip

	Case 1: 
	\[(P-c_0Q)(x,y) = x^{M_1}y^{m_1}\unit_1(x,y)\:\:\text{and}\:\: Q(x,y)=x^{M_2}y^{m_2}\unit_2(x,y)\]
	with $(M_1,m_1) \in \NN^2$, $(M_2,m_2)\in \NN^{2}$ and 
	$(\unit_1,\unit_2)\in \k[x,y]^2$ with $\unit_1(0,0)\neq 0$ and $\unit_2(0,0)\neq 0$. Then, we have 
	\[(P-\bc Q)(x,y)= x^{M_1}y^{m_1}\unit_1(x,y)+(c_0-\bc) x^{M_2}y^{m_2}\unit_2(x,y).\]
	As $c_0$ is Newton generic, 
	\begin{itemize}
		\item  if $m_2\geq 1$ and $M_2 \geq 1$ then $(M_2,m_2)$ is not a dicritical vertex, then $M_2\geq M_1$ and $m_2\geq m_1$ and $S_{f,(0,0),c_0}=0$ by Example \ref{casdebase1}.
		\item Assume $m_2=0$ (similarly assuming $M_2=0$). 
	\begin{itemize}
		\item If $M_1\leq M_2$ then by Example \ref{casdebase1}, $S_{f,(0,0),c_0}=0$.
		\item If $M_2<M_1$ then $(M_2,0)$ is a dicritical vertex (and $c_0$ cancels its coefficients) but there is no one-dimensional smooth face attached to it, which contradicts $c_0 \notin \B_{f,(0,0)}^{\Newton}$.
	\end{itemize}
	\end{itemize}

	Case 2: 
	\[(P-c_0Q)(x,y) = x^{M_1}A(x,y)^{m_1}\unit_1(x,y)\:\text{and}\: Q(x,y)=x^{M_2}A(x,y)^{m_2}\unit_2(x,y)\]
	with $(M_1,m_1,M_2,m_2)\in \NN^4$, 
	$A(x,y)=y-\mu x^q+g(x,y)$, $q\in \NN_{\geq 1}$, $g(x,y)=\sum_{a+bq>q}c_{a,b}x^ay^b$ and $\unit\in \k[x,y]$ with $\unit(0,0)\neq 0$. 

	In a similar way as before, we have
	\[(P-\bc Q)(x,y)=x^{M_1}A(x,y)^{m_1}\unit_1(x,y)+(c_0-\bc) x^{M_2}A(x,y)^{m_2}\unit_2(x,y).\]
	The Newton polygons of $x^{M_1}A(x,y)^{m_1}\unit_1(x,y)$ and $(c_0-\bc) x^{M_2}A(x,y)^{m_2}\unit_2(x,y)$ have only one one-dimensional face and are parallel. As $c_0 \notin \B_{f,(0,0)}^{\Newton}$, the one dimensional face of 
	${(c_0-\bc) x^{M_2}A(x,y)^{m_2}\unit_2(x,y)}$ is not dicritical (because $c_0$ cancels all the coefficients of points of that face), implying that $M_1\leq M_2$ and $m_1 \leq m_2$ then $S_{f,(0,0),c_0}=0$ by Example \ref{casdebase2}. \\

	$\star$ Assume that $(P-c_0Q,Q)$ is not a base case. We consider formula (\ref{formulethmprincipal}).

	We prove now that 
	\[
		S_{f,(0,0),c_0}-\sum_{\tsigma} S_{f_{\tsigma},(0,0),c_0}=0
	\]
	studying the coefficients $\eps_{C_h}$, $\eps_{C_v}$ and $\eps_{C}$. Then, by an induction process, assuming 
	$S_{f_{\tsigma},(0,0),c_0}=0$ for any Newton transformation $\tsigma$, we conclude that $S_{f,(0,0),c_0}=0$.
	\begin{enumerate}
		\item We study $\eps_{C_h}$ (and as well $\eps_{C_v}$).

			Denote $(a_0,b_0)=\gamma_{h}(P-c_0Q)$, $(a_1,b_1)=\gamma_{h}(Q)$ and $(\overline{a_0},\overline{b_0})=\gamma_{h}(P-\bc Q)$.

			\begin{enumerate}
				\item
					Assume $c_0$ does not cancel the coefficient of the vertices $(\overline{a_0},\overline{b_0})$ then 
					$(a_0,b_0)=(\overline{a_0},\overline{b_0})$.

					By Proposition \ref{conedim2}, where ``\dots'' means elements not in the horizontal face, we have:

					$\bullet$ If $P=\alpha_{a,0}x^a + \dots$ and $Q=\beta_{a_1,0}x^{a_1} + \dots$ then:

					If $a<a_1$ then $P-c_0 Q=\alpha_{a,0}x^a + \dots$, then $a_0=a<a_1$ and $\eps_{C_h}=0$.

					If $a_1<a$ then $P-c_0 Q=-c\beta_{a_1,0}x^{a_1}+\dots$, then $a_0=a_1$ and $\eps_{C_h}=0$.

					If $a=a_1$ then 
					$P-c_0 Q = (\alpha_{a,0}-c_0 \beta_{a_1,0})x^{a_1}+\dots$, but 
					$\alpha_{a,0}-c_0 \beta_{a_1,0}\neq 0$ by assumption on $c_0$, then $a_0=a_1$ and $\eps_{C_h}=0$.

					$\bullet$ If $P=\alpha_{a,0}x^a+\dots$ and $Q=\beta_{a_1,b_1}x^{a_1}y^{b_1}+\dots$ with $b_1>0$ then 
					$P-c_0 Q = \alpha_{a,0}x^a+\dots$ and we have $(a_0,b_0)=(a,0)$ 
					then $\left((a_0-a_1,b_0-b_1)\mid (0,1)\right)<0$ and $\eps_{C_h}=0$.

					$\bullet$ If $P=\alpha_{a,d}x^ay^d+ \dots$ with $d>0$ and $Q=\beta_{a_1,0}x^{a_1}+\dots$ then \\
					$P-c_0 Q = -c_0 \beta_{a_1,0}x^{a_1}+\dots$ then $a_0=a_1$ and $\eps_{C_h}=0$.

					$\bullet$ If $P=\alpha_{a,d}x^ay^d + \dots$ and $Q=\beta_{a_1,b_1}x^{a_1}y^{b_1}+\dots$ then: 

					If $P-c_0 Q = \alpha_{a,d}x^ay^d+\dots$ then $(a_0,b_0)=(a,d)$, and we have either $d<b_1$, either $d=b_1$ and $a_0\leq a_1$, 
					then $\left((a_0-a_1,b_0-b_1)\mid (0,1)\right)\leq 0$, and $\eps_{C_h}=0$.

					If $P-c_0 Q = -c_0 \beta_{a_1,b_1}x^{a_1}y^{b_1}+\dots$, then $(a_0,b_0)=(a_1,b_1)$ and $\eps_{C_h}=0$.

					If $P-c_0 Q = (\alpha_{a,d}-c_0 \beta_{a_1,b_1})x^{a_1}y^{b_1}+\dots$ then by assumption on $c_0$, $\alpha_{a,d}\neq c_0 \beta_{a_1,b_1}$, then $(a_0,b_0)=(a_1,b_1)$ and $\eps_{C_h}=0$.

				\item \label{casannulation} 
					Assume $c_0$ cancels the coefficient of the dicritical face $\vertexw=(\overline{a_0},\overline{b_0})$ of $P-\bc Q$, then $\overline{b_0}=0$ and
					$(a_1,b_1)=(\overline{a_0},\overline{b_0})$. Furthermore
					the adjacent face $\gamma$ of dimension 1 (with points $\vertex$ and $\vertexw$ in Definition \ref{def:smooth}) is smooth, and $c_0$ does not cancel the coefficient of $\vertex$. 

					$\bullet$ Assume $\gamma_{h}(P-c_0 Q)=(a_0,0)$ with $a_0 > \overline{a_0}$.
					The attached compact one dimensional face denoted by $\fdelta$ has vertices $(a_0,0)$ and $\vertex=(a_\vertex,1)$ with $a_\vertex < \overline{a_0}=a_1$.
					\begin{center}
						\includegraphics[scale=0.5]{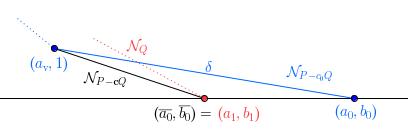}
					\end{center}

					By Theorem \ref{formuleSf}, we have
					\begin{multline*} 
						S_{f,(0,0),c_0}  =  \eps_{C_h}[x^{a_0-a_1}\colon\Gm \to \Gm, \sigma_{\Gm}] \\
						-\eps_{C_\fdelta}
						\left[(P-c_0Q)_{\gamma_{P-cQ}(C_\fdelta)}/Q_{\gamma_{Q}(C_\fdelta)}\colon\Gm^2 \setminus \{(P-c_0Q)_{\gamma_{P-c_0Q}(C_\fdelta)}\cdot Q_{\gamma_{Q}(C_\fdelta)}=0\} \to \Gm, \sigma_{C_\fdelta}\right]\\
						+ S_{f_{\sigma_{\fdelta}},(0,0),c_0} + \dots 
					\end{multline*}
					As $a_0>a_1$ we obtain $\eps_{C_h}=1$, by Proposition \ref{conedim2}. 
					We have $\gamma_{Q}(C_\fdelta)=(a_1,0)$, $(a_0,0)\in \fdelta$ and $((a_0-a_1,0)\mid \eta_{\fdelta})>0$, because $\eta_{\fdelta} \in \left(\NN_{>0}\right)^2$. 
					Then, by Proposition \ref{conedim1}, we have $\eps_{C_\fdelta}=-1$.
					As $(a_v,1)\in \fdelta$ and $(a_0,b_0)\in \fdelta$, by the Newton transformation $\tsigma$ with $\mu$ the root of $
					(P-c_{0}Q)_{\fdelta}$
					\[
						\tsigma \colon \left\{ \begin{array}{ccl}
								x & = & x_1 \\ 
								y & = & x_1^{a_0-a_v}(y_1+\mu)
							\end{array}
						\right.
					\]
					we have 
					$(P-c_0Q)_{\sigma}=x_1^{a_0}y_1 \unit_1(x,y)$ and $Q_{\sigma}=x_1^{a_1}\unit_2(x,y)$
					where $\unit_1$ and $\unit_2$ are units. Then, by the base case formula, we have 
					\[S_{f_{\sigma_{\fdelta}},(0,0),c_0}=-[x_{1}^{a_0-a_1}y_1\colon\Gm^2 \to \Gm, \sigma_{\Gm^2}].\]

					Applying cut and paste relations in the ring $\M_{\Gm}^{\Gm}$, we have 
					\begin{multline} \label{compensationintermediaire}
						\eps_{C_h}[x^{a_0-a_1}\colon\Gm \to \Gm, \sigma_{\Gm}] + S_{f_{\sigma_{\fdelta}},(0,0),c_0} \\
						-\eps_{C_\fdelta}
						\left[(P-c_0Q)_{\gamma_{P-c_0Q}(C_\fdelta)}/Q_{\gamma_{Q}(C_\fdelta)}\colon\Gm^2 \setminus \{(P-c_0Q)_{\gamma_{P-c_0Q}(C_\fdelta)}\cdot Q_{\gamma_{Q}(C_\fdelta)}=0\} \to \Gm, \sigma_{C_\fdelta}\right]
						= 0.
					\end{multline}
				     	For the convenience of the reader we explain the computation done in formula \ref{compensationintermediaire}. Indeed, we have 
					\begin{multline*}	
						\left[(P-c_0Q)_{\gamma_{P-c_0Q}(C_\fdelta)}/Q_{\gamma_{Q}(C_\fdelta)}\colon\Gm^2 \setminus \{(P-c_0Q)_{\gamma_{P-c_0Q}(C_\fdelta)}\cdot Q_{\gamma_{Q}(C_\fdelta)}=0\} \to \Gm, \sigma_{C_\fdelta}\right] \\ = 
						\left[A(x^{a_0-a_1}-Bx^{a_v-a_1}y)\colon\Gm^2 \setminus \{x^{a_0}-Bx^{a_v}y=0\} \to \Gm, \sigma_{C_\fdelta}\right].
					\end{multline*}
					for some constants $A$ and $B$. 
					The action of $\sigma_{C_\fdelta}=\sigma_{(1,a_0-a_v)}$ is 
					$\lambda.(x,y)=(\lambda x, \lambda^{a_0-a_v}y)$.
					Now, considering the equivariant toric changes of variables
					\[ (\Gm^2,\sigma_{(1,a_0-a_v)}) \to (\Gm^2,\sigma_{(1,a_0-a_1)}), (x,y) \mapsto (x,z=Bx^{a_v-a_1}y)\]
					and 
					\[(\Gm^2\setminus \{x^{a_0-a_1}=z\},\sigma_{(1,a_0-a_1)}) \to (\Gm^2 \setminus \{u=x^{a_0-a_1}\},\sigma_{(1,a_0-a_1)}), (x,z) \mapsto (x,u=A(x^{a_0-a_1}-z))\]
					we have the equalities by isomorphisms
					\begin{multline*}
						\left[A(x^{a_0-a_1}-Bx^{a_v-a_1}y)\colon\Gm^2 \setminus \{x^{a_0}-Bx^{a_v}y=0\} \to \Gm, \sigma_{C_\fdelta}\right]
						\\ =
						\left[A(x^{a_0-a_1}-z)\colon\Gm^2 \setminus \{x^{a_0-a_1}= z \} \to \Gm, \sigma_{(1,a_0-a_1)}\right]
						\\ =
						\left[u\colon \Gm^2 \setminus \{u=x^{a_0-a_1}\} \to \Gm, \sigma_{(1,a_0-a_1)}\right].
					\end{multline*}
					By cut and paste relations we have 
					\begin{multline*}
						\left[u\colon \Gm^2 \setminus \{u=x^{a_0-a_1}\} \to \Gm, \sigma_{(1,a_0-a_1)}\right] \\
						= \left[u\colon \Gm^2 \to \Gm, \sigma_{(1,a_0-a_1)}\right] 
						- \left[u\colon \Gm^2 \cap \{u=x^{a_0-a_1}\} \to \Gm, \sigma_{(1,a_0-a_1)}\right].
					\end{multline*}
					We have the equivariant isomorphism
					\[ (\Gm,\sigma_{\Gm}) \to (\Gm^2 \cap \{u=x^{a_0-a_1}\} \to \Gm, \sigma_{(1,a_0-a_1)}), x \mapsto (x,x^{a_0-a_1}) \]
					then we have
					\[ \left[u\colon \Gm^2 \cap \{u=x^{a_0-a_1}\} \to \Gm, \sigma_{(1,a_0-a_1)}\right] = [x^{a_0-a_1}\colon\Gm \to \Gm, \sigma_{\Gm}].\]
					Also, we have 
					\[\left[u\colon \Gm^2 \to \Gm, \sigma_{(1,a_0-a_1)}\right] = (\LL-1)[u\colon\Gm\to \Gm, \sigma_(a_0-a_1)]=(\LL-1)\]
					because in the construction of the ring $\M_{\Gm}^{\Gm}$, we have 
					\[
						[u\colon\Gm\to \Gm, \sigma_{a_0-a_1}] = [u\colon\Gm\to \Gm, \sigma_{\Gm}].
					\]

					As well, we have the equivariant isomorphism (above $\Gm$)

					\[ (\Gm^2,\sigma_{\Gm^2}) \to (\Gm^2, \sigma_{(1,a_0-a_1+1)}), (x_1,y_1) \mapsto (x_1,z=x_1^{a_0-a_1}y_1) \]
					then we have
					\[
						[x_1^{a_0-a_1}y_1 \colon \Gm^2 \to \Gm, \sigma_{\Gm^2}] = [u\colon \Gm^2 \to \Gm, \sigma_{(1,a_0-a_1+1)}] = \LL-1.
					\]
					The formula \ref{compensationintermediaire} follows now by a direct computation. 

					$\bullet$ Assume $\gamma_{h}(P-c_0 Q)=\vertex$. 
					As $(a_1,b_1)=(\overline{a_0},\overline{b_0})$, we have $\overline{a_0}=a_1 > a_0$ and $\eps_{C_h}=0$.
			\end{enumerate}
			\smallskip

		\item  We prove now that  $\eps_{C}=0$ for each dimension 2 cone $C \in \E_{c_0}$ different from $C_h$ and $C_v$.\\
			We denote by $\gamma_{P-c_0Q}(C)=(a_0,b_0)$ and $\gamma_{Q}(C)=(a_1,b_1)$ the associated 0-dimensional faces in $\N_Q$ and $\N_{P-c_0Q}$. We denote by 
			\[C_{\gamma_{Q}(C)} = \RR_{>0}\eta_1+\RR_{>0}\eta_2\:\:\text{and}\:\:
				C_{\gamma_{P-c_0Q}(C)}=\RR_{>0}\omega_1 + \RR_{>0}\omega_2
			\]
			the associated cones in the fan of $\N_Q$ and $\N_{P-c_0Q}$. 
			We have $C = C_{\gamma_{Q}(C)} \cap C_{\gamma_{P-c_0Q}(C)}$.

			As $c_0$ is Newton generic, $\Delta'(Q) \subset \Delta'(P-c_0Q)$ where $\Delta'$ is defined in Notation \ref{defDelta'}.
			\begin{enumerate}
				\item If $(a_0,b_0)=(a_1,b_1)$ then by Proposition \ref{conedim2}, we have $\eps_{C}=0$.
				\item If $(a_0,b_0)\neq (a_1,b_1)$ and $(a_0,b_0) \in \N_Q$ then as $\Delta'(Q) \subset \Delta'(P-c_0Q)$, by convexity, we get that the intersection $C_{\gamma_{Q}(C)} \cap C_{\gamma_{P-c_0Q}}=C$ is empty. Contradiction.
				\item \label{casspecificque} 
					Assume that $(a_0,b_0)\notin \N_{Q}$ and $\omega_1 \in C_{\gamma_{Q}(C)}$ (the case $\omega_2 \in C_{\gamma_{Q}(C)}$ is similar). 

					Let $D$ be the line perpendicular to $\RR \omega_1$ with $(a_0,b_0)\in D$. By Proposition \ref{conedim2}:
					\begin{itemize}
						\item If $(a_1,b_1) \in D$, then $((a_0-a_1,b_0-b_1)\mid \omega_1)=0$ and $\eps_{C}=0$.
						\item If $D \cap \N_Q = \emptyset$ then $((a_0-a_1,b_0-b_1)\mid \omega_1)<0$ and $\eps_{C}=0$.
						\item As $\Delta'(Q) \subset \Delta'(P-c_0 Q)$, the case $D \cap \N_Q \neq \emptyset$ does not occur. 
					\end{itemize}
					\begin{center}
						\includegraphics[scale=0.4]{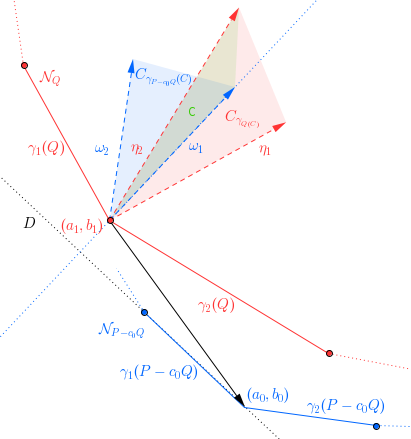}
						\includegraphics[scale=0.4]{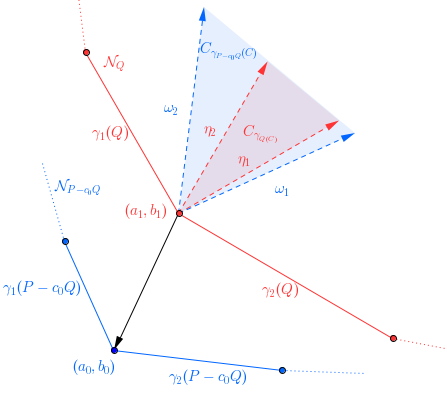}
					\end{center}

				\item 
					Assume that  $(a_0,b_0)\notin \N_{Q}$ and $(\eta_1,\eta_2) \in C_{\gamma_{P-c_0Q}(C)}^2$. Let $\gamma_1(Q)$ and $\gamma_2(Q)$ be the 1-dimensional faces of $\N_{Q}$ which intersect in $(a_1,b_1)$, with normal vectors $\eta_1$ and $\eta_2$.

					If $b_1 \neq 0$ (similarly $a_1 \neq 0$) then $(a_1,b_1)\in \Gamma'(Q) \subset \Gamma'(P-c_0 Q)$, then $(a_0,b_0)$ does not belong to the cone generated by $\gamma_1$ and $\gamma_2$ with vertex $(a_1,b_1)$,
					we get \[((a_0-a_1,b_0-b_1)\mid \eta_1)<0\:\:\text{and}\:\:((a_0-a_1,b_0-b_1)\mid \eta_2)<0\]
					implying that $\eps_{C}=0$ by Proposition \ref{conedim2}.

					If $b_1 = 0$ (similarly $a_1=0$) then $\eta_1 = (0,1)$ or $\eta_2 =(0,1)$, so $(0,1)\in C$ namely $C=C_h$ implying a contradiction with the assumption on $C$.
			\end{enumerate}

			\item We prove that $\eps_{C}=0$ for each dimension 1 cone $C \in \E_{c_0}$. By above notation, we have 
				\[C = C_{\gamma_{\gamma_{Q}(C)}} \cap C_{\gamma_{P-c_0 Q}(C)}.\]

			$\bullet$ If $C_{\gamma_{\gamma_{Q}(C)}}$ has dimension 2 and $C_{\gamma_{P-c_0 Q}(C)}$ has dimension 1, generated by a vector $\omega$, then 
			$C_{\gamma_{P-c_0 Q}(C)} \subset C_{\gamma_{\gamma_{Q}(C)}}$. 
			The face $\gamma_{Q}(C)$ has dimension 0, denoted by $(a_1,b_1)$, and the face $\gamma_{P-c_0 Q}(C)$ has dimension 1 and we fix a vertex $(a_0,b_0)$ on it. We can assume that we are not in the situation of $ \gamma_{P-c_0 Q}(C) = \fdelta$ (as in \ref{casannulation}), which is ever treated. Then similarly to the case \ref{casspecificque}, we obtain that $((a_0-a_1,b_0-b_1)\mid \omega)<0$ and by Proposition \ref{conedim1}, we get $\eps_{C}=0$.

			$\bullet$ If $C_{\gamma_{P-c_0 Q}(C)}$ has dimension 2 and $C_{\gamma_{\gamma_{Q}(C)}}$ has dimension 1 generated by a vector $\omega$, then $C_{\gamma_{\gamma_{Q}(C)}} \subset C_{\gamma_{P-c_0 Q}(C)}$. 
			The face $\gamma_{P-c_0 Q}(C)$ has dimension 0 denoted by $(a_0,b_0)$ and the face $\gamma_{Q}(C)$ has dimension 2 and we fix a vertex $(a_1,b_1)$ on it. As $\Gamma'(Q)\subset \Gamma'(P-c_0 Q)$ we have 
			$((a_0-a_1,b_0-b_1)\mid \omega)\leq 0$ then by Proposition \ref{conedim1}, we get $\eps_{C}=0$.

			$\bullet$ If $C_{\gamma_{P-c_0 Q}(C)}$ and $C_{\gamma_{\gamma_{Q}(C)}}$ have dimension 1, then they are equal (because their intersection has also dimension 1) and in the same way as before, we get $\eps_{C}=0$.
	\end{enumerate}
\end{proof}
\begin{example} \label{exemple}
	We consider $P=(x^2+y^3)^2+x^5$ and $Q=2x^4+x^2y^3+xy^5+y^8$.
	We have 
	\[ P-\bc Q = (1-2\bc)x^4+(2-\bc)x^2y^3+y^6 + x^5 - \bc xy^5-\bc y^8. \]

	We work at $(0,0)$.
	The Newton polygon $\N( P-\bc Q)$ has three faces with face polynomials
	\[y^6, (1-2\bc)x^4+(2-\bc)x^2y^3+y^6\:\:\text{and}\:\:(1-2\bc)x^4.\]
	We denote by $\gamma$ the one dimension face of $\N( P- \bc Q)$. The faces $\gamma$ and $(4,0)$ are dicritical.\\
	The value $c=1/2$ is a non Newton generic value.\\
	In a similar way, working with $Q-\bc P$, $c= \infty$ is a non Newton generic value.\\
	The face polynomial 
	\[P_{\gamma,c}(x,y) = (1-2c)x^4+(2-c)x^2y^3+y^6\]
	does not have simple roots if and only if $c \in \{-4,0\}$.\\
	If $c \notin \{-4,0\}$ the face $\gamma$ is non degenerate and there is no more non Newton generic values.\\
	By Theorem \ref{thmegaliteBf}, we conclude that $\B_{f,(0,0)}^{\top}=\{-4,0,1/2,\infty\}$.\\ 
	Using \href{https://www.singular.uni-kl.de/}{Singular}, we obtain
	\[ \nMilnor(P-c_{\gen} Q,(0,0))=15, \nMilnor(P+4Q,(0,0))=16, \nMilnor(P-Q/2,(0,0))=17,\]
	\[\nMilnor(P,(0,0))=18\:\: \text{and}\:\: \nMilnor(Q,(0,0))=16.\]
	We check that, computing the motivic invariant $S_{f,(0,0),c}$ using Theorem \ref{thmSf00c} and the pictures:
	\begin{center}
		\begin{tabular}{ccc}
			\includegraphics[scale=0.34]{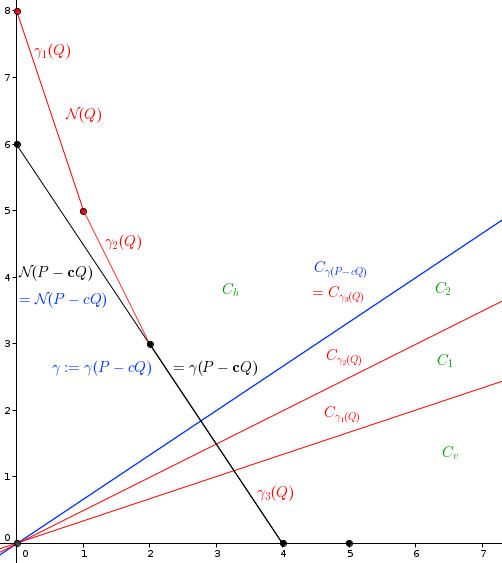} & \hspace{1cm} &  \includegraphics[scale=0.34]{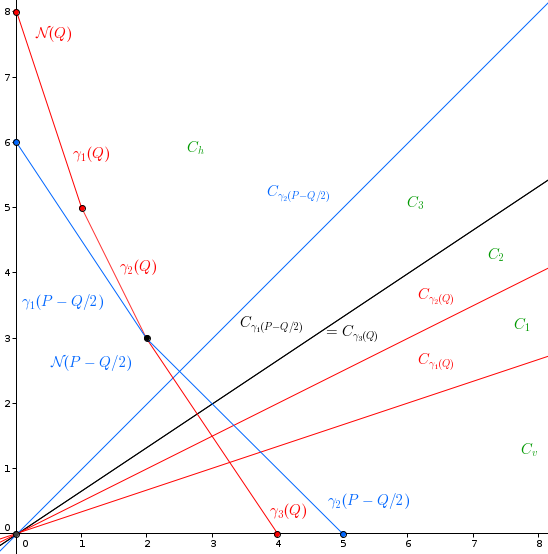} \\
			$c \neq \frac{1}{2}$ & & $c=\frac{1}{2}$
		\end{tabular}
	\end{center}

	$\bullet$ For $c \notin \{-4,0,1/2,\infty\}$, we have $\N(P-\bc Q) = \N(P-cQ)$. \\
	With notation on the picture ``$c\neq \frac{1}{2}$'', the dual fan $\E_c$ in Definition \ref{defEc} induced by $P-cQ$ and $Q$, has 
	the following cones
	\[
		C_{\gamma_{1}(Q)} = \RR_{>0}(3,1),\: C_{\gamma_{2}(Q)}=\RR_{>0}(2,1)\:\:\text{and}\:\: 
		C_{\gamma_{3}(Q)}=C_{\gamma(P-cQ)}=\RR_{>0}(3,2)
	\]
	as dimension 1 cones and 
	\[ C_v=\RR_{>0}(1,0)+\RR_{>0}(3,1), C_{1}=\RR_{>0}(3,1)+\RR_{>0}(2,1),\]
	\[C_{2}=\RR_{>0}(2,1)+\RR_{>0}(3,2)\:\:\text{and}\:\: C_h = \RR_{>0}(3,2)+\RR_{>0}(0,1)\]
	as dimension 2 cones.\\
	Applying Proposition \ref{conedim2} and Proposition \ref{conedim1}, we obtain in formula \ref{formulethmprincipal},
	\[\eps_{C_h}=\eps_{C_v}=\eps_{C_1}=\eps_{C_2}=\eps_{C_{\gamma_{1}(Q)}}=\eps_{C_{\gamma_{2}(Q)}}=\eps_{C_{\gamma_{3}(Q)}}=0.\]
	Writing the face polynomials 
	\[(P-cQ)_{\gamma}=(y^3-\mu_1x^2)(y^3-\mu_2x^2)\]
	(with $\mu_1$ and $\mu_2$ different from $-2$) 
	\[Q_{\gamma_1}=y^5(y^3+x), Q_{\gamma_2}=xy^3(y^2+x)\:\:\text{and}\:\:Q_{\gamma_3}=x^2(2x^2+y^3),\]
	we have by formula \ref{formulethmprincipal}:
	\[
		S_{f,(0,0),c} = S_{f_{\tsigma(3,2,\mu_1)},(0,0),c} + S_{f_{\tsigma(3,2,\mu_2)},(0,0),c} + 
		S_{f_{\tsigma(3,1,-1)},(0,0),c} + S_{f_{\tsigma(2,1,-1)},(0,0),c} + S_{f_{\tsigma(3,2,-2)},(0,0),c}.
	\]
	Each Newton transform is a base case. By Example \ref{casdebase2}, we have  
	\[
		S_{f_{\tsigma(3,1,-1)},(0,0),c} = S_{f_{\tsigma(2,1,-1)},(0,0),c} = S_{f_{\tsigma(3,2,-2)},(0,0),c} = 0.
	\]
	By Remark \ref{caracteristiqueEulercasdebase2}\ref{casspecifique}, we have 
	\[
		S_{f_{\tsigma(3,2,\mu_1)},(0,0),c} = S_{f_{\tsigma(3,2,\mu_2)},(0,0),c}=0 
	\]
	and we conclude that ${S_{f,(0,0),c} = 0}$.\medskip

	$\bullet$ For $c=-4$, we have $\N(P+4Q) = \N(P-\bc Q)$ and $(P+4Q)_{\gamma} = (y^3+3x^2)^2$. We obtain
	\[
		S_{f,(0,0),-4} = S_{f_{\tsigma(3,2,-3)},(0,0),-4}
	\]
	as before, with 
	\[(P+4Q)_{\tsigma(3,2,-3)}(x_1,y_1)=cx_1^{12}(y_1^2-lx_1+g(x_1,y_1))\]
	with $l \in \ZZ$ and ${g(x_1,y_1)=\sum_{2a+b>2}c_{a,b}x^ay^b}$, and 
	\[Q_{\tsigma(3,2,-3)}(x_1,y_1)=x_1^{12}\unit(x_1,y_1)\] with $\unit$ a unit.
	We have
	\begin{multline*}
		S_{f_{\tsigma(3,2,-3)},(0,0),-4} = [x_1 \colon \Gm \to \Gm, \sigma_{\Gm}] + [x_{1}^{12}y_1^2/x_{1}^{12} \colon \Gm^2 \to \Gm]\\
		+ [x_1^{12}(y_1^2-lx_1)/x_1^{12} \colon \Gm^2 \setminus (y_1^2=lx_1) \to \Gm] + S_{(f_{\tsigma(3,2,-3)})_{\tsigma(2,1,l)},(0,0),-4} 
	\end{multline*}
	by Theorem \ref{thmSf00c}, with $S_{(f_{\tsigma(3,2,-3)})_{\tsigma(2,1,l)},(0,0),-4}$ computed in Example \ref{casdebase2} with 
	\[
		\left\{
			\begin{array}{lcl}
				\left((P+4Q)_{\tsigma(3,2,-3)}\right)_{\tsigma(2,1,l)}(x_2,y_2) & = & c'x_2^{26}(y_2+l'x_2+g'(x_2,y_2))\\
				\left(Q_{\tsigma(3,2,-3)}\right)_{\tsigma(2,1,l)}(x_2,y_2)& = & x_2^{24}\unit'(x_2,y_2)
			\end{array}
		\right.
	\]
	with $(c',l')\in \ZZ^2$, $g'(x_2,y_2)=\sum_{a+b>1}c_{a,b}x_2^ay_2^b$ and $\unit'$ a unit.\\

	By \cite[Proposition 2.4]{carai-infini}, additivity of the Euler characteristic $\tilde{\chi_{c}}$, and Remark \ref{caracteristiqueEulercasdebase2}\ref{eulercasspecifique}, we have
	\[\tilde{\chi_{c}}\left( S_{f,(0,0),-4}^{(1)} \right) = 1 + 0 - 2 + 0 = -1.\]
	In particular, by Theorem \ref{Mfc}, we check
	\[
		\nMilnor(P-c_{\gen} Q,(0,0))-\nMilnor(P+4Q,(0,0))=-1.
	\] 

	$\bullet$ For $c=0$, we have $\N(P) = \N(P-\bc Q)$ and $P_{\gamma} = (y^3+x^2)^2$. As before, we obtain
	\[
		S_{f,(0,0),0} = S_{f_{\tsigma(3,2,-1)},(0,0),0}
	\]
	with 
	\[
		P_{\tsigma(3,2,-1)}(x_1,y_1)=cx_1^{12}(y_1^2-lx_1^3+g(x_1,y_1))\:\:\text{and}\:\: 
		Q_{\tsigma(3,2,-1)}(x_1,y_1)=x_1^{12}\unit(x_1,y_1)
	\]
	with $l \in \ZZ$, $g(x_1,y_1)=\sum_{2a+3b>6}c_{a,b}x^ay^b$, and $\unit$ a unit.\\

	By Theorem \ref{thmSf00c}, we obtain
	\begin{multline*}
		S_{f_{\tsigma(3,2,-1)},(0,0),0} = [x_1^3 \colon \Gm \to \Gm, \sigma_{\Gm}] + [x_{1}^{12}y_1^2/x_{1}^{12} \colon \Gm^2 \to \Gm]\\
		+ [x_1^{12}(y_1^2-lx_1^3)/x_1^{12} \colon \Gm^2 \setminus (y_1^2=lx_1^3) \to \Gm] + 
		S_{(f_{\tsigma(3,2,-1)})_{\tsigma(2,3,l)},(0,0),0} 
	\end{multline*}
	with $S_{(f_{\tsigma(3,2,-1)})_{\tsigma(2,3,l)},(0,0),0}$ computed in Example \ref{casdebase2} with 
	\[
		\left(P_{\tsigma(3,2,-1)}\right)_{\tsigma(2,3,l)}(x_2,y_2)  =  c'x_2^{30}(y_2-l'x_2^3+g'(x_2,y_2)),
		\left(Q_{\tsigma(3,2,-1)}\right)_{\tsigma(2,3,l)}(x_2,y_2) = x_2^{24}\unit(x_2,y_2)
	\]
	with $g'(x,y)=\sum_{p+3q>3}c_{a,b}x^ay^b$, $(c',l')\in \ZZ^2$ and $\unit$ a unit. As above, we check
		\[ \nMilnor(P-c_{\gen} Q,(0,0))-\nMilnor(P,(0,0)) =\tilde{\chi_{c}}\left( S_{f,(0,0),0}^{(1)} \right) = 3 + 0 - 6 + 0 = -3.\]

	$\bullet$ For $c=1/2$. With notation on the picture ``$c= \frac{1}{2}$'', the dual fan $\E_c$ in Definition \ref{defEc} induced by 
	$P-Q/2$ and $Q$, has the following cones
	\[C_{\gamma_{1}(Q)} = \RR_{>0}(3,1), C_{\gamma_{2}(Q)}=\RR_{>0}(2,1),\] 
	\[C_{\gamma_{3}(Q)}=C_{\gamma_{1}(P-Q/2)}=\RR_{>0}(3,2)\:\:\text{and}\:\:C_{\gamma_{2}(P-Q/2)}=\RR_{>0}(1,1)\]
	as dimension 1 cones and 
	\[
		C_v=\RR_{>0}(1,0)+\RR_{>0}(3,1), C_{1}=\RR_{>0}(3,1)+\RR_{>0}(2,1)
	\]
	\[
		C_{2}=\RR_{>0}(2,1)+\RR_{>0}(3,2), C_{3}=\RR_{>0}(3,2)+\RR_{>0}(1,1), C_h = \RR_{>0}(1,1)+\RR_{>0}(0,1).
	\]
	as dimension 2 cones.\\

	Applying Proposition \ref{conedim2} and Proposition \ref{conedim1}, we obtain in formula \ref{formulethmprincipal}:
	\[
		\eps_{C_v}=\eps_{C_1}=\eps_{C_2}=\eps_{C_3}=\eps_{C_{\gamma_{1}(Q)}}=\eps_{C_{\gamma_{2}(Q)}}=\eps_{C_{\gamma_{3}(Q)}}=0, \eps_{C_h}=1, \eps_{C_{\gamma_2(P-Q/2)}}=-1.
	\]
	Writing the face polynomials $(P-Q/2)_{\gamma_1}=y^3(y^3+3/2x^2)$, 
	\[(P-Q/2)_{\gamma_2}=3/2x^2(y^3+2/3x^3)=3/2x^2(y-\xi_1 x)(y-\xi_2 x)(y-\xi_3x),\] 
	${Q_{\gamma_1}=y^5(y^3+x)}$, $Q_{\gamma_2}=xy^3(y^2+x)$ and 
	$Q_{\gamma_3}=x^2(2x^2+y^3)$, we have by formula \ref{formulethmprincipal}:
	\begin{align*}
		S_{f,(0,0),1/2} & = [x\colon \Gm \to \Gm, \sigma_{\Gm}] + 
		[x^{-2}(y^3+2/3x^3) \colon \Gm^{2}\setminus (y^3+2/3x^3=0) \to \Gm, \sigma_{C} ]\\
		& + S_{f_{\tsigma(3,2,-3/2)},(0,0),1/2} +
		\sum_{i=1}^{3} S_{f_{\tsigma(1,1,\xi_i)},(0,0),1/2}   \\
		& +  S_{f_{\tsigma(3,1,-1)},(0,0),1/2} + S_{f_{\tsigma(2,1,-1)},(0,0),1/2} + S_{f_{\tsigma(3,2,-2)},(0,0),1/2}
	\end{align*}
	with $C=C_{\gamma_2(P-Q/2)}$.
	By Example \ref{casdebase2}, we have 
	\[
		S_{f_{\tsigma(3,2,-3/2)},(0,0),1/2} = S_{f_{\tsigma(3,1,-1)},(0,0),1/2} = S_{f_{\tsigma(2,1,-1)},(0,0),1/2} 
		= S_{f_{\tsigma(3,2,-2)},(0,0),1/2} = 0.
	\]
	For each $i \in \{1,2,3\}$, by Remark \ref{caracteristiqueEulercasdebase2}\ref{eulercasspecifique}, 
	we also have $\chi(S_{f_{\tsigma(1,1,\xi_i)},(0,0),1/2})=0$.\\

	So, by \cite[Proposition 2.4]{carai-infini}, we check 
	\[ \nMilnor(P-c_{\gen} Q,(0,0))-\nMilnor(P-Q/2,(0,0)) =\tilde{\chi_{c}}\left( S_{f,(0,0),1/2}^{(1)} \right) = 1 - 3 + 0 = -2.\]

	$\bullet$ For $c=\infty$, we work with $Q-\bc P$ and then with the fraction $Q/P$ and the couple $(Q,P)$. We use again the first diagram and formula (\ref{formulethmprincipal}), we have 
	\[\eps_{C_v}=1, \eps_{C_{\gamma_{1}(Q)}}=\eps_{C_{\gamma_{2}(Q)}}=-1\]
	and 
	\[\eps_{C_h}=\eps_{C_1}=\eps_{C_2}=\eps_{C_{\gamma_{3}(Q)}}=0.\]
	As before, with $C=\RR_{>0}(3,1)$, we obtain 
	\begin{align*}
		S_{f,(0,0),\infty}  = & [y^2\colon \Gm \to \Gm, \sigma_{\Gm}] + 
		[(xy^5+y^8)/y^6\colon \Gm^2\setminus (x+y^3=0) \to \Gm, \sigma_{C}] \\ \\
		+ & [(xy^5+x^2y^3)/y^6\colon \Gm^2\setminus (x+y^2=0) \to \Gm, \sigma_{C}] \\ \\
		+ & S_{f_{\tsigma(3,2,-1)},(0,0),\infty} + 
		S_{f_{\tsigma(3,1,-1)},(0,0),\infty} + S_{f_{\tsigma(2,1,-1)},(0,0),\infty} + S_{f_{\tsigma(3,2,-2)},(0,0),\infty}.
	\end{align*}
        By Remark\ref{caracteristiqueEulercasdebase2}\ref{casspecifique}, we have 
	\[S_{f_{\tsigma(3,2,-1)},(0,0),\infty}=0.\]
	The motives $S_{f_{\tsigma(3,1,-1)},(0,0),\infty}$ and $S_{f_{\tsigma(2,1,-1)},(0,0),\infty}$ are computed using 
	Example \ref{casdebase2}.\\ \\
	By Remark\ref{caracteristiqueEulercasdebase2}\ref{eulercasspecifique}, we have 
	\[\tilde{\chi_{c}}(S_{f_{\tsigma(3,1,-1)},(0,0),\infty})=\tilde{\chi_{c}}(S_{f_{\tsigma(2,1,-1)},(0,0),\infty})=0.\]
	Using computation in the previous case $c=0$, and Theorem \ref{thmSf00c}, we obtain
	\[S_{f_{\tsigma(3,2,-1)},(0,0),\infty} = S_{(f_{\tsigma(3,2,-1)})_{\tsigma(2,3,l)},(0,0),\infty}=0.\]
	Using \cite[Proposition 2.4]{carai-infini}, we check
	\[ \nMilnor(P-c_{\gen} Q,(0,0))-\nMilnor(Q,(0,0)) =\tilde{\chi_{c}}\left( S_{f,(0,0),\infty}^{(1)} \right) = 2-2-1=-1.\]
\end{example}

\section{Proof of Theorem \ref{thmSf00c}} \label{proofthmSf00c} \label{sectionpreuvethm}
In this section, using notation of sections \ref{notations:Xeps-omega} and \ref{section-Sfxc}, we detail the proof of Theorem \ref{thmSf00c}.
\subsection{Decomposition of $\left(Z_{\f,\omega,X}^{\delta}(T)\right)_{((0,0),c)}$ along $\E_c$} \label{sec:decompositionzeta}~

In this section, we introduce for each cone $C \in \E_c$ a motivic zeta function $Z_{f-c,\omega,\C}^{\delta}(T)$ and we use it to decompose the motivic zeta function $\left(Z_{\f,\omega,X}^{\delta}(T)\right)_{((0,0),c)}$ in formula~(\ref{zetadecomposition}).
\begin{rem} Let $n\geq 1$, $k\geq 1$ and $\delta\geq 1$.
	We will simply write $X_{n,k,c}^{\delta}$ for $X_{n,k,(0,0),c}^{\delta}$ defined in formula~(\ref{Xepsnk}).
	The origin of each arc of $X_{n,k,c}^{\delta}$ is $((0,0),c)$ and its generic point belongs to $X$, so from the definition of $\X$ and Remark \ref{rem:arc-carte}, there is an isomorphism between $X_{n,k,c}^{\delta}$ and 

			\[
				\left\{
					\begin{array}{c|l} 
						(x(t),y(t))\in \L(\AA^2_\k) &
						\begin{array}{l} 
							\Ord{x(t)}= k, \Ord{y(t)}>0, 
							\Ord{\omega(x(t),y(t))}=(\nu-1)k \\
							\ord{\frac{P}{Q}(x(t),y(t))-c}=n, \Ord{Q(x(t),y(t))}\leq n\delta
						\end{array} 
					\end{array}
				\right\}
			\]
			
			\noindent endowed with the map 
			$\ac{f-c} \colon (x(t),y(t)) \mapsto \ac{(f-c)(x(t),y(t))}$
			and the $\Gm$-standard action on arcs. In the following, we will identify these arc spaces.
\end{rem}
\begin{notation}
	For any cone $C$ of the fan $\E_c$ and any $(n,k)\in \left(\NN_{\geq 1}\right)^2$, we consider 
	\[X_{n,k,c}^{\delta}(\C) = \left\{(x(t),y(t)) \in  X_{n,k,c}^{\delta} \mid (\text{ord}\: x(t), \text{ord}\: y(t))\in \C \right\}\]
	endowed with its structural map $\ac{f-c}$ to $\Gm$ and we decompose  
	\[
		X_{n,k,c}^{\delta}= \bigsqcup_{\C \in \E_c} X_{n,k,c}^{\delta}(\C).
	\]
	We consider the motivic zeta function relative to the cone $\C$
	\[
		Z_{f-c,\omega,\C}^{\delta}(T) = 
		\sum_{n\geq 1} \left(\sum_{k \geq 1} \LL^{-(\nu-1)k} \mes(X_{n,k,c}^{\delta}(\C))\right)
		T^n 
	\]
	and we have the decomposition
	\begin{equation} \label{zetadecomposition} 
		\left(Z_{\f,\omega,X}^{\delta}(T)\right)_{((0,0),c)} = \sum_{\C \in \E_c} Z_{f-c,\omega,\C}^{\delta}(T).
	\end{equation}
\end{notation}
\begin{rem}\label{rem:n-inferieur-m(ord{x},ord{y})} \label{remannulation}
   For any arc $(x(t),y(t)) \in X_{n,k,c}^{\delta}(\C)$, we can write
   \[(P-cQ)(x(t),y(t))=t^{\m_{P-cQ}(\text{ord} x(t),\text{ord} y(t) )}\widetilde{(P-cQ)}(x(t),y(t),t)\]
   and
   \[Q(x(t),y(t))=t^{\m_{Q}(\text{ord} x(t),\text{ord} y(t))}\widetilde{Q}(x(t),y(t),t)\]
   with $(\widetilde{P-cQ})(x,y,u)$ and $\widetilde{Q}(x,y,u)$ in $\k[x,y,u]$ and $\m_{P-cQ}$ and $\m_{Q}$ defined in Proposition \ref{prop:dualfan-Newton-local} relatively to $\Delta(P-cQ)$ and $\Delta(Q)$. Furthermore, we have the equivalences
	   \begin{equation}\label{acconditions1}
		   \ord{(P-cQ)(x(t),y(t))} = \m_{P-cQ}(\ord{x(t)},\ord{y(t)} ) \iff 
			   (P-cQ)_{\gamma_{P-cQ}(\C)}(\ac{x(t)}, \ac{y(t)}) \neq 0
	   \end{equation}
	   \begin{equation}\label{acconditions2}
		   \ord{Q(x(t),y(t))} = \m_{Q}(\ord{x(t)},\ord{y(t)} ) \iff 
			   Q_{\gamma_{Q}(\C)}(\ac{x(t)}, \ac{y(t)}) \neq 0.
	   \end{equation}
\end{rem}

	In the following, using the angular components conditions (\ref{acconditions1}) and (\ref{acconditions2}), we decompose in formula (\ref{decompzetaomega}) each motivic zeta function $Z_{f-c,\omega,\C}^{\delta}(T)$.

\begin{rem}\label{rem:finitudeml} Let $(x(t),y(t)) \in X_{n,k,c}^{\delta}(\C)$ be an arc. We denote by 
	\[m = \Ord\!{(P-cQ)(x(t),y(t))}\:\:\text{and}\:\: l = \Ord\!{Q(x(t),y(t))}.\]
	Then, we have $n=m-l$ and $l\leq n\delta$. In particular, there are finitely many such pairs $(m,l)$ when $(x(t),y(t))$ runs over
	$X_{n,k,c}^{\delta}(\C)$.
\end{rem}
\begin{notation} \label{NotationsXmlkc}For any $(m,l,k)\in (\Z_{\geq 1})^3$, with $m>l$, we define 
	\[ X_{(m,l),k,c}^{\delta}(\C)=
		\left\{
				(x(t),y(t))\in X_{m-l,k,c}^{\delta}(\C) \mid \Ord\!{(P-cQ)(x(t),y(t))}=m, \Ord\!{Q(x(t),y(t))}=l
		\right\}.
	\]
\end{notation}

	Using Remark \ref{rem:finitudeml} and the additivity of the motivic measure, we have the decomposition
	\[Z_{f-c,\omega,\C}^{\delta}(T) = \sum_{n\geq 1} 
		\big(\:\sum_{\underset{m-l=n, l\leq n \delta}{(m,l,k)\in (\Z_{\geq 1})^3}}\LL^{-(\nu-1)k}
		\mes \big(X_{(m,l),k,c}^{\delta}(\C)\big) \:\big)T^n.
	\]
\begin{notation}
	For any $(\blacktriangle,\blacktriangledown) \in \{=,\neq\}^2$, we define
	\begin{equation} \label{decompositiontriangles}
		Z_{f-c,\omega,\C}^{\delta,(\blacktriangle,\blacktriangledown)}(T) = 
		\sum_{n\geq 1} \big(\: \sum_{\underset{m-l=n, l\leq n\delta}{(m,l,k)\in (\Z_{\geq 1})^3}}\LL^{-(\nu-1)k}
		\mes\big(X_{(m,l),k,c}^{\delta,(\blacktriangle,\blacktriangledown)}(\C)\big)\:\big)T^n 
	\end{equation}
	where for any $(m,l,k)$, we set 
	\[ X_{(m,l),k,c}^{\delta,(\blacktriangle,\blacktriangledown)}(\C)=
		\left\{
			\begin{array}{c|c}
				(x(t),y(t))\in X_{(m,l),k,c}^{\delta}(\C) & 
				\begin{array}{l}
					(P-cQ)_{\gamma_{P-cQ}(\C)}(\ac{x(t)}, \ac{y(t)}) \blacktriangle 0 \\
					Q_{\gamma_{Q}(\C)}(\ac{x(t)}, \ac{y(t)}) \blacktriangledown 0
				\end{array}
			\end{array}
		\right\}.
	\]
\end{notation}
	With these notation, we have: 
	\begin{equation}\label{decompzetaomega}
	Z_{f-c,\omega,\C}^{\delta}(T)= 
	\sum_{(\blacktriangle,\blacktriangledown)\in \{=,\neq\}^2}Z_{f-c,\omega,\C}^{\delta,(\blacktriangle,\blacktriangledown)}(T).
        \end{equation}

	In sections \ref{sectionneqneq} and \ref{sectionnonneqneq} we study the rationality and the limit of each motivic
	zeta function $Z_{f-c,\omega,\C}^{\delta,(\blacktriangle,\blacktriangledown)}(T)$ with 
	$(\blacktriangle,\blacktriangledown) \in \{=,\neq\}^2$ .

\subsection{Rationality and limit of $Z_{f-c,\omega,\C}^{\delta,(\neq,\neq)}(T)$} \label{sectionneqneq}
Let $C$ be a cone of $\E_c$.
We start by introducing notation used through out the section, then we study in section \ref{Cdim2} the case where $\dim C=2$ and in section \ref{Cdim1} the case where $\dim C=1$.
\begin{notations} \label{notationsactionsformdiff} \label{actionGmGm2} \label{notation-identification-action} ~
		Let $(\alpha,\beta) \in C \cap (\NN_{\geq 1})^2$. By construction of the Grothendieck ring of varieties in 
		\cite[\S 2]{GuiLoeMer05a} and \cite[\S 2]{GuiLoeMer06a} (see \cite[Proposition 3.13]{Rai11}), the class 
		\[
			\left[(P-cQ)_{\gamma(P-cQ)}/Q_{\gamma(Q)}\colon\Gm^2 \setminus \{(P-cQ)_{\gamma(P-cQ)}=0\} \cup \{Q_{\gamma(Q)}=0\} \to \Gm, \sigma_{\alpha,\beta}\right]
		\]
		with $\sigma_{\alpha,\beta}$ the action of $\Gm$ defined by
		$\sigma_{\alpha,\beta}(\lambda,(x,y))=(\lambda^\alpha x,\lambda^\beta y)$, does not depend on $(\alpha,\beta)$ in 
		$C \cap (\NN_{\geq 1})^2$. We replace $\sigma_{\alpha,\beta}$ by $\sigma_{C}$.
\end{notations}
\begin{rem}\label{rem1} 
	In this section, we consider arcs $(x(t),y(t))$ such that 
	\[(P-cQ)_{\gamma_{P-cQ}(\C)}(\ac{x(t)}, \ac{y(t)}) \neq 0\:\:\text{and}\:\: Q_{\gamma_{Q}(\C)}(\ac{x(t)}, \ac{y(t)}) \neq 0\]
	then by Remark \ref{rem:n-inferieur-m(ord{x},ord{y})} we have 
	\[
		\left\{
			\begin{array}{lcl}
				\ord{(f-c)(x(t),y(t))} & = & (\m_{P-cQ}-\m_Q)(\Ord{x(t)}, \Ord{y(t)})\\
				\m_{P-cQ}(\Ord{x(t)}, \Ord{y(t)}) & > & \m_Q(\Ord{x(t)}, \Ord{y(t)})\\
				\m_Q(\Ord{x(t)}, \Ord{y(t)}) & \leq & (\m_{P-cQ}-\m_Q)(\Ord{x(t)}, \Ord{y(t)})\delta
			\end{array}
		\right. .
	\]
\end{rem}
\begin{notation} \label{notationsneqneq}
	Using Remark \ref{rem1}, we consider the cone 
	\[
		\C_{(\neq,\neq)}^{\delta} = \{(\alpha,\beta)\in \C \mid \m_{P-cQ}(\alpha,\beta)>\m_{Q}(\alpha,\beta),\ 
		\m_{Q}(\alpha,\beta)\leq (\m_{P-cQ}(\alpha,\beta) - \m_{Q}(\alpha,\beta)) \delta\}
	\]
	and for any integer $n\geq 1$
	\[
		\C_{(\neq,\neq),n}^{\delta} = 
		\{(\alpha,\beta) \in \C_{(\neq,\neq)}^{\delta} \mid n=\m_{P-cQ}(\alpha,\beta) - \m_{Q}(\alpha,\beta)\}.
	\] 
	For any $(\alpha,\beta) \in \C_{(\neq,\neq)}^{\delta}$, we define the arc spaces
	\[X_{(\alpha,\beta)}^{\neq,\neq}=
		\left\{
			\begin{array}{c|c}
				(x(t),y(t))\in \L(\A^2_{\k}) &
				\begin{array}{l}
					\ord{x(t)}=\alpha,\: \ord{y(t)}=\beta\\
					(P-cQ)_{\gamma_{P-cQ}(\C)}(\ac{x(t)},\ac{y(t)})\neq 0\\
					Q_{\gamma_{Q}(\C)}(\ac{x(t)},\ac{y(t)})\neq 0 
				\end{array}
			\end{array}
		\right\}.
	\]
\end{notation}
\subsubsection{Case of dimension 2 cone} \label{Cdim2}
\begin{rem}\label{remCdim2}
	If $\dim{\C}=2$ then $\dim{\gamma_{P-cQ}(\C)} = \dim{\gamma_{Q}(\C)}=0$, we write
	\begin{equation} \label{faceconedim2}
		\gamma_{P-cQ}(\C)=(a_0,b_0), \gamma_{Q}(\C)=(a_1,b_1)\:\:\text{and}\:\: \C = \RR_{>0}\omega_1 + \RR_{>0}\omega_2
	\end{equation}
	where $\omega_1 \in \NN^2 $ and $\omega_2 \in \NN^2$ are two primitive vectors.
\end{rem}
\begin{prop}\label{conedim2}
	The series $Z_{f-c,\omega,\C}^{\delta,(\neq,\neq)}(T)$ is rational and there is $\delta_0>0$, such that for $\delta \geq \delta_0$:\\
	\begin{itemize}
		\item if $\gamma_{P-cQ}(\C) = (a_0,0)$ and $\gamma_{Q}(\C) = (a_1,0)$, then
			\begin{equation} \label{neqneqaxe1}
				-\lim_{T \to \infty} Z_{f-c,\omega,\C}^{\delta,(\neq,\neq)}(T) = \eps_{(a_0,a_1)} [x^{a_0-a_1} \colon \Gm \to \Gm, \sigma_{\Gm}]
			\end{equation}
			with $\eps_{(a_0,a_1)} = 1$ if $a_0>a_1$, otherwise $\eps_{(a_0,a_1)}=0$,\\

		\item if $\gamma_{P-cQ}(\C) = (0,b_0)$ and $\gamma_{Q}(\C) = (0,b_1)$, then
			\begin{equation} \label{neqneqaxe2}
				-\lim_{T \to \infty} Z_{f-c,\omega,\C}^{\delta,(\neq,\neq)}(T) = \eps_{(b_0,b_1)} [y^{b_0-b_1} \colon \Gm \to \Gm, \sigma_{\Gm}]
			\end{equation}
			with $\eps_{(b_0,b_1)}= 1$ if $b_0>b_1$, otherwise $\eps_{(b_0,b_1)}=0$, \\

		\item otherwise
			\begin{equation} \label{neqneqnonaxebis}
				-\lim_{T \to \infty} Z_{f-c,\omega,\C}^{\delta,(\neq,\neq)}(T) = 
				- \eps_{\C} \left[(P-cQ)_{\gamma_{P-cQ}(\C)}/Q_{\gamma_{Q}(\C)}\colon \Gm^2 \to \Gm, \sigma_{\C}\right] 
			\end{equation}
			with $ \eps_{\C} = 1$ if $(\omega_1 \mid (a_0-a_1,b_0-b_1))>0$ and $(\omega_2 \mid (a_0-a_1,b_0-b_1))>0$, otherwise $\eps_{\C} = 0$.
	\end{itemize}
\end{prop}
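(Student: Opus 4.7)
The strategy is to parametrize the arcs in the sets $X_{(\alpha,\beta)}^{\neq,\neq}$ indexed by $(\alpha,\beta)\in C\cap\NN^2$, compute the motivic measure of each such piece, and recognize the resulting generating series as a combination of the elementary series of Lemma~\ref{lemmedescones}. Concretely, writing $x(t)=a_\alpha t^\alpha+\ldots$ and $y(t)=b_\beta t^\beta+\ldots$ and using Remark~\ref{rem:n-inferieur-m(ord{x},ord{y})}, the conditions $(P-cQ)_{\gamma_{P-cQ}(C)}(\ac{x},\ac{y})\neq 0$ and $Q_{\gamma_Q(C)}(\ac{x},\ac{y})\neq 0$ restrict the pair of leading coefficients $(a_\alpha,b_\beta)$ to the open variety
\[V_C=\Gm^2\setminus\{(P-cQ)_{\gamma_{P-cQ}(C)}\cdot Q_{\gamma_Q(C)}=0\},\]
while the remaining coefficients are free. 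A standard truncation argument then gives a $\Gm$-equivariant measure of the form
\[\mes\bigl(X_{(\alpha,\beta)}^{\neq,\neq}\bigr)=\LL^{-\alpha-\beta}\bigl[(P-cQ)_{\gamma_{P-cQ}(C)}/Q_{\gamma_Q(C)}\colon V_C\to \Gm,\sigma_C\bigr],\]
where Notation~\ref{notation-identification-action} guarantees that the class does not depend on the choice of $(\alpha,\beta)\in C\cap\NN^2$.

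Next, since $k=\alpha$ is forced by the weight of the differential form $\omega=x^{\nu-1}dx\wedge dy$, the plan is to plug the measure above into the definition of $Z_{f-c,\omega,C}^{\delta,(\neq,\neq)}(T)$, factor the motive out of the double sum, and recognize what remains as
\[\bigl[V_C\to \Gm,\sigma_C\bigr]\cdot S_{\phi,\eta,C_{(\neq,\neq)}^\delta}(T),\quad\phi(\alpha,\beta)=(a_0-a_1)\alpha+(b_0-b_1)\beta,\quad\eta(\alpha,\beta)=\nu\alpha+\beta,\]
which is rational by Lemma~\ref{lemmedescones}. Following the standard argument of \cite[\S 2.5]{carai-infini}, one shows that for $\delta\geq\delta_0$ large enough the auxiliary constraint $\m_Q\leq\phi\cdot\delta$ appearing in the definition of $C_{(\neq,\neq)}^\delta$ is vacuous up to a subset that does not affect the limit, so that $\lim_{T\to\infty} S_{\phi,\eta,C_{(\neq,\neq)}^\delta}(T)=\chi_c\bigl(C\cap\{\phi>0\}\bigr)$.

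The core of the proposition is then a case analysis on the sign of $\phi$ on the two primitive generators $\omega_1,\omega_2$ of $C$. If both pairings $(\omega_i\mid(a_0-a_1,b_0-b_1))$ are strictly positive, then $\phi>0$ throughout the open cone and case~\ref{cone1} of Lemma~\ref{lemmedescones} gives $\chi_c(C)=1$, yielding the third bullet with $\eps_C=1$. If at least one pairing is non-positive, the truncated cone $C\cap\{\phi>0\}$ is either empty or bordered by a one-dimensional face, and the remaining cases of the lemma give $\chi_c=0$, hence $\eps_C=0$.

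The axis cases require one extra simplification. When $\gamma_{P-cQ}(C)=(a_0,0)$ and $\gamma_Q(C)=(a_1,0)$ the face polynomials are monomials in $x$ alone, so $V_C=\Gm^2$ and the structural map $(x,y)\mapsto(c_{P-cQ}/c_Q)x^{a_0-a_1}$ is independent of $y$; an equivariant cut-and-paste similar to the one performed in the proof of Theorem~\ref{thmegaliteBf}, step~(\ref{casannulation}), identifies the corresponding class in $\M_{\Gm}^{\Gm}$ with $[x^{a_0-a_1}\colon\Gm\to \Gm,\sigma_\Gm]$, and a symmetric argument handles the $y$-axis case. I expect the main obstacle to be the bookkeeping around the boundary rays of $C$ when one vertex lies on a coordinate axis: the form $\phi$ then vanishes on one of the generators $\omega_i$, and one has to apply Lemma~\ref{lemmedescones} with the appropriate closed-versus-open-cone convention so that the constants $\eps_{(a_0,a_1)}$ and $\eps_{(b_0,b_1)}$ come out with the correct sign and multiplicity.
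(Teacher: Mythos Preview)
Your overall strategy matches the paper's: factor out the motive and reduce to a cone series governed by Lemma~\ref{lemmedescones}. In the case where both $(\omega_i\mid(a_0-a_1,b_0-b_1))>0$, your argument is correct and identical to the paper's Case~A. However, there are two genuine gaps.

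First, your claim that ``the auxiliary constraint $\m_Q\leq\phi\cdot\delta$ is vacuous up to a subset that does not affect the limit, so that $\lim S=\chi_c(C\cap\{\phi>0\})$'' fails precisely when one pairing is non-positive. If, say, $(\omega_1\mid D)<0$ and $(\omega_2\mid D)>0$ with $D=(a_0-a_1,b_0-b_1)$, the set $C\cap\{\phi>0\}$ is still an \emph{open} two-dimensional sub-cone of $C$, hence has $\chi_c=1$, not $0$; the same happens when one pairing vanishes. What actually forces $\eps_C=0$ is that the $\delta$-inequality is \emph{closed} ($\m_Q\leq\delta\phi$): for $\delta$ large it cuts out a half-closed cone strictly inside $\{\phi>0\}$, and it is case~\ref{cone3} of Lemma~\ref{lemmedescones} (closed edge, $\chi_c=0$) that applies. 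The paper works through each of these sign sub-cases explicitly; you cannot short-circuit them by dropping the $\delta$-constraint and reading off $\chi_c(C\cap\{\phi>0\})$.

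Second, in the axis case $b_0=b_1=0$ (resp.\ $a_0=a_1=0$), your factorization breaks down earlier than you indicate. Here $\phi(\alpha,\beta)=(a_0-a_1)\alpha$ is independent of $\beta$, so for each $n$ the fibre $\phi^{-1}(n)\cap C\cap\ZZ^2$ is \emph{infinite}, violating the finiteness hypothesis of Lemma~\ref{lemmedescones}; the series $S_{\phi,\eta,C^\delta_{(\neq,\neq)}}(T)$ is not defined by that lemma. The paper's Case~B does not decompose over $\beta$: for each fixed $\alpha$ it takes the entire arc space $\{\ord x=\alpha,\ (\ord x,\ord y)\in C\}$ and invokes \cite[Proposition~3]{carai-antonio} to sum the resulting one-variable series, which is where the class $[x^{a_0-a_1}\colon\Gm\to\Gm,\sigma_{\Gm}]$ genuinely comes from. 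Your proposed cut-and-paste on the motive $[\Gm^2\to\Gm]$ would instead yield $(\LL-1)[x^{a_0-a_1}\colon\Gm\to\Gm]$, and nothing in your cone computation absorbs the extra factor $(\LL-1)$.
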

\begin{proof}
	Using notation (\ref{faceconedim2}) in Remark \ref{remCdim2}, for any $(\alpha,\beta) \in \C$, we have 
	\[\m_{P-cQ}(\alpha,\beta)=\alpha a_0 + \beta b_0\:\: \text{and}\:\: \m_{Q}(\alpha,\beta)= \alpha a_1 + \beta b_1.\]
	Remark that for any integer $n\geq 1$, if it is not empty, the set 
		$\C_{(\neq,\neq),n}^{\delta} \cap (\NN_{\geq 1})^2$ is infinite if and only if $b_0 = b_1 = 0$ or $a_0=a_1=0$. Indeed, assuming $(a_0,a_1)\neq (0,0)$, for instance $a_1\neq 0$, we have  
	\begin{itemize}
		\item if $b_1 \neq 0$ then the inequality $\alpha a_1 + \beta b_1 \leq n \delta$ is satisfied for only finitely many $(\alpha, \beta) \in (\NN_{\geq 1})^2$.
		\item if $b_1 = 0$ and $b_0 \neq 0$ then, there are finitely many $(\alpha, \beta) \in (\NN_{\geq 1})^2$ such that $\alpha a_1 \leq n \delta$ and 
			$n=\alpha (a_0-a_1) + \beta b_0$. 
		\item if $b_1 = b_0 = 0$ then without condition on $\beta$, if $\C_{(\neq,\neq),n}^{\delta} \cap (\NN_{\geq 1})^2$ is not empty, it is infinite.
	\end{itemize}
	Then, to prove the rationality of $Z_{f-c,\omega,\C}^{\delta,(\neq,\neq)}(T)$, we have to study the case $(b_0,b_1)\neq (0,0)$ (and similarly $(a_0,a_1)\neq (0,0)$) and the case $(b_0,b_1)=(0,0)$ (and similarly $(a_0,a_1)=(0,0)$). The rationality result and the bound $\delta_0$ will be obtained using Lemma \ref{lemmedescones} and \cite[Proposition 3]{carai-antonio} based on summation of geometric series.

	Case A. We assume $(b_0,b_1)\neq (0,0)$. The proof is similar to $(a_0,a_1)\neq (0,0)$.\\
At least one element of $\gamma_{P-cQ}(\C) \cup \gamma_{Q}(\C)$  does not belong to the axis $\RR_{>0}(1,0)$.\\
In that case, by finiteness of 
$\C_{(\neq,\neq),n}^{\delta} \cap (\NN_{\geq 1})^2$ and additivity of the measure, we can write 
\[
	Z_{f-c,\omega,\C}^{\delta,(\neq,\neq)}(T) = 
	\sum_{n\geq 1} \big(\sum_{(\alpha,\beta) \in \C_{(\neq,\neq),n}^{\delta}\cap (\Z_{>0})^2}\LL^{-(\nu-1)\alpha}
	\mes\big(X_{(\alpha,\beta)}^{(\neq,\neq)}\big)\:\big)T^n.
\]
It follows from the definition of the motivic measure that 
\[
	\mes\left(X_{(\alpha,\beta)}^{(\neq,\neq)}\right)=\LL^{-\alpha-\beta}
	[(P-cQ)_{\gamma_{P-cQ}(\C)}/Q_{\gamma_{Q}(\C)}\colon\Gm^2 \to \Gm, \sigma_{\alpha,\beta}].
\]
Applying Remark \ref{notation-identification-action}, we have
\[
	Z_{f-c,\omega,\C}^{\delta,(\neq,\neq)}(T) = 
	\left[(P-cQ)_{\gamma_{P-cQ}(\C)}/Q_{\gamma_{Q}(\C)}\colon\Gm^2 \to \Gm, \sigma_{\C}\right] 
	S_{\C^{\delta}_{(\neq, \neq)}}(T)
\]
with 
\[
	S_{\C^{\delta}_{(\neq, \neq)}}(T) = \sum_{n\geq 1} \sum_{(\alpha,\beta) \in \C_{(\neq,\neq),n}^{\delta}}\LL^{-\nu \alpha - \beta}T^{n}.
\]
Using Lemma \ref{lemmedescones}, the formal series $S_{\C^{\delta}_{(\neq, \neq)}}(T)$ is rational and 
$\lim\limits_{T \to \infty} S_{\C^{\delta}_{(\neq, \neq)}}(T)=\chi_{c}\big(\C^{\delta}_{(\neq, \neq)}\big).$

In the following we study the cone $\C^{\delta}_{(\neq, \neq)}$ and compute its Euler characteristic.
By notation (\ref{faceconedim2}) in Remark \ref{remCdim2} and Proposition \ref{prop:dualfan-Newton-local}, 
$\C^{\delta}_{(\neq, \neq)}$ is the set of $k_1\omega_1 + k_2 \omega_2$ with $(k_1,k_2)\in (\RR_{>0})^2$ such that 
\begin{equation}\label{conditionD}
	\begin{cases}
		k_1(\omega_1 \mid D(\C))+k_2(\omega_2 \mid D(\C))>0 \\ \\

		k_1(\omega_1 \mid \gamma_{Q}(\C))+k_2(\omega_2 \mid \gamma_{Q}(\C))\leq 
		\delta[k_1(\omega_1 \mid D(\C))+k_2(\omega_2 \mid D(\C))].
	\end{cases}
\end{equation}
with $D(\C):=\gamma_{P-cQ}(\C)-\gamma_{Q}(\C)$.

$\bullet$ Assume $(\omega_1 \mid D(\C))>0$ and $(\omega_2 \mid D(\C))>0$. 
Then for 
\[\delta \geq \max \left( \frac{ (\omega_1 \mid \gamma_Q(\C)) }{(\omega_1 \mid D(\C))}, 
	\frac{(\omega_2 \mid \gamma_Q(\C))}{(\omega_2 \mid D(\C))}\right)
\]
we have $\C^{\delta}_{(\neq, \neq)} = \C \simeq (\RR_{>0})^2$ and $\chi_c\big(\C^{\delta}_{(\neq, \neq)}\big) = 1$.

$\bullet$ Assume $(\omega_1 \mid D(\C))=0$ and $(\omega_2 \mid D(\C))>0$.

Then for any $(k_1,k_2)\in (\RR_{>0})^2$, we have 
\[k_1.0 + k_2(\omega_2 \mid D(\C))>0\]
and by condition (\ref{conditionD}), we consider the inequality 
\begin{equation}\label{ineq10}
	k_1(\omega_1 \mid \gamma_Q(\C)) \leq k_2[ \delta(\omega_2 \mid D(\C))-(\omega_2 \mid \gamma_{Q}(\C))].
\end{equation}
\begin{itemize}
	\item  If $(\omega_1 \mid \gamma_{Q}(\C))=0$ then we also have $(\omega_1 \mid \gamma_{P-cQ}(\C))=0$. As $\omega_1$, $\gamma_{Q}(\C)$ and $\gamma_{P-cQ}(\C)$ are elements of $\NN^2$, then by orthogonality relations, either $\omega_1=(1,0)$ and 
		$\gamma_{Q}(\C)\in \RR(0,1)$, $\gamma_{P-cQ}(\C) \in \RR(0,1)$ either $\omega_1=(0,1)$ and $\gamma_{Q}(\C)\in \RR(1,0)$, $\gamma_{P-cQ}(\C) \in \RR(1,0)$, these two cases are incompatible with the assumptions $a_0 \neq 0$, $a_1 \neq 0$ and $(b_0,b_1)\neq (0,0)$.
	\item  If $(\omega_1 \mid \gamma_{Q}(\C))\neq 0$ then the inequality \ref{ineq10} is non trivial and we have the homeomorphism
		\[
			\C^{\delta}_{(\neq, \neq)} \simeq 
			\{(k_1,k_2)\in (\RR_{>0})^2 \mid 	
				k_1(\omega_1 \mid \gamma_Q(\C)) \leq k_2[ \delta(\omega_2 \mid D(\C))-(\omega_2 \mid \gamma_{Q}(\C))]
			\}
		\]
		with an Euler characteristic equal to 0 because its complement in $(\RR_{>0})^2$ is an open cone.
\end{itemize}

$\bullet$ The case $(\omega_1 \mid D(\C))> 0$ and $(\omega_2 \mid D(\C)) = 0$ is similar.

$\bullet$ Assume $(\omega_1 \mid D(\C))< 0$ and $(\omega_2 \mid D(\C))> 0$.

We have $(\omega_1\mid \gamma_{Q}(\C))\geq 0$ and $(\omega_2\mid \gamma_{Q}(\C))\geq 0$. 
For $\delta > \frac{(\omega_2\mid\gamma_{Q}(\C))}{(\omega_1\mid \gamma_{Q}(\C))}$ we have the inequality
\[\frac{(\omega_1\mid \gamma_{Q}(\C))-\delta(\omega_1 \mid D(\C))}
{\delta(\omega_2 \mid D(\C))-(\omega_2\mid\gamma_{Q}(\C))}>
-\frac{(\omega_1 \mid D(\C))}{(\omega_2 \mid D(\C))}\]
We deduce that $\C^{\delta}_{(\neq, \neq)}$ is homeomorphic to the cone of $(\RR_{>0})^2$ defined by the inequality
\[
	k_1[(\omega_1\mid \gamma_{Q}(\C))-\delta(\omega_1 \mid D(\C))]
	\leq k_2[\delta(\omega_2 \mid D(\C))-(\omega_2\mid\gamma_{Q}(\C))]
\]
whose Euler characteristic is equal to zero as above.

$\bullet$ The case $(\omega_1 \mid D(\C))> 0$ and $(\omega_2 \mid D(\C))< 0$ is similar.

$\bullet$ If $(\omega_1 \mid D(\C))\leq 0$ and $(\omega_2 \mid D(\C))\leq 0$ then 
$\C^{\delta}_{(\neq, \neq)} = \emptyset$.\medskip

Case B. We assume $b_0=b_1=0$. The proof is similar for $a_0=a_1=0$ working with the axis $x=0$.
The faces $\gamma_{P-cQ}(\C)$ and $\gamma_{Q}(\C)$ are the horizontal faces $\gamma_{h}(P-cQ)$ and $\gamma_h(Q)$ and are contained in the axis $y=0$. We have 
\[\gamma_{(P-cQ)}(\C)=(a_0,0),\: \gamma_{Q}(\C)=(a_1,0)\]
and 
\[\C = \RR_{>0}(0,1) + \RR_{>0}\omega = \{(\alpha,\beta) \in (\RR_{>0})^2 \mid \beta p > \alpha q\}\]
with $\omega = (p,q) \in \NN \times \NN_{\geq 1}$ a primitive vector.
Then, we have 
\[\C^{\delta}_{\neq, \neq} = \{ (\alpha,\beta)\in \C \mid a_0\alpha>a_1 \alpha,\: a_1\alpha\leq \delta(a_0-a_1)\alpha\}.\]

Thus, if $a_1 \geq a_0$ then the cone $\C^{\delta}_{\neq, \neq}$ is empty. We assume now $a_0 > a_1$. 

If $\delta \geq \frac{a_1}{a_0-a_1}>0$ then $\C^{\delta}_{\neq, \neq} = \C$ and we can write 
\[Z_{f-c,\omega,\C}^{\delta,(\neq,\neq)}(T) = \sum_{\alpha \geq 1} \LL^{-(\nu-1)\alpha} \mes(X_{\alpha, c})T^n\]
with 
\[
	\begin{array}{lcl}
		X_{\alpha, c}  & = & \{(x(t),y(t)) \in \L(\A^2_{\k}) \mid \ord{x(t)}=\alpha,\ (\ord{x(t)}, \ord{y(t)})\in \C \} \\
		& = &
		\left\{
			\begin{array}{l|l}
				(x(t),y(t)) \in \L(\A^2_{\k}) &
				\ord{x(t)}=\alpha,\ \ord{y(t)} > \left[\frac{q \Ord{x(t)}}{p}\right]+1
			\end{array}
		\right\}
	\end{array}
\]
The equation (\ref{neqneqaxe1}) follows now from \cite[Proposition 3]{carai-antonio}. 
\end{proof}

\subsubsection{Case of dimension 1 cone}\label{Cdim1}
\begin{rem}
If $\dim{\C}=1$ then $\C=\RR_{>0}(p,q)$ with $(p,q) \in (\NN_{\geq 1})^2$ a primitive vector.
\end{rem}

We consider $(a_0,b_0) \in \gamma_{P-cQ}(\C)$ and $(a_1,b_1)\in \gamma_{Q}(\C)$.

\begin{prop}\label{conedim1}
	The series $Z_{f-c,\omega,\C}^{\delta,(\neq,\neq)}(T)$ is rational and there is $\delta_0>0$, such that for $\delta \geq \delta_0$:
	\[
		-\lim_{T \to \infty} Z_{f-c,\omega,\C}^{\delta,(\neq,\neq)}(T) = 
		- \eps_{\C}\left[\frac{(P-cQ)_{\gamma_{P-cQ}(\C)}}{Q_{\gamma_{Q}(\C)}}\colon \Gm^2 \setminus 
		\{(P-cQ)_{\gamma_{P-cQ}(\C)}Q_{\gamma_{Q}(\C)}=0\} \to \Gm, \sigma_{\C}\right]
	\]
	with $\eps_{\C}=-1$ if $((a_0-a_1,b_0-b_1)\mid(p,q))> 0$ and otherwise $\eps_{\C}=0$.
\end{prop}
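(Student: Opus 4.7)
The plan is to follow the template of Case A in the proof of Proposition~\ref{conedim2}, with the simplification that $C$ is one-dimensional so that $C \cap \ZZ^2 = \{k(p,q) \mid k \in \ZZ_{\geq 1}\}$ is parametrized by a single integer.

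First, for $(\alpha,\beta) = k(p,q) \in C$ one computes
\[
\m_{P-cQ}(\alpha,\beta) - \m_Q(\alpha,\beta) = k\bigl((a_0-a_1,b_0-b_1)\mid(p,q)\bigr),
\]
so the condition $\m_{P-cQ} > \m_Q$ defining $C^{\delta}_{(\neq,\neq)}$ reduces to the sign of $((a_0-a_1,b_0-b_1)\mid(p,q))$. If this inner product is $\leq 0$, then $C^{\delta}_{(\neq,\neq)} = \emptyset$, the zeta function vanishes, and the formula holds with $\eps_C = 0$. Otherwise, setting $\delta_0$ at least as large as $(pa_1+qb_1)/((a_0-a_1,b_0-b_1)\mid(p,q))$ (with the convention $\delta_0 = 1$ when the numerator is $0$) makes the inequality $\m_Q \leq \delta(\m_{P-cQ} - \m_Q)$ automatic on $C \cap \ZZ^2$, so $C^{\delta}_{(\neq,\neq)} = C \cap \ZZ^2$ for $\delta \geq \delta_0$.

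Next, for such $(\alpha,\beta) = k(p,q)$, the arc space $X^{(\neq,\neq)}_{(\alpha,\beta)}$ has motivic measure
\[
\mes\bigl(X^{(\neq,\neq)}_{(\alpha,\beta)}\bigr) = \LL^{-\alpha-\beta}\left[\frac{(P-cQ)_{\gamma_{P-cQ}(C)}}{Q_{\gamma_Q(C)}}\colon \Gm^2 \setminus \{(P-cQ)_{\gamma_{P-cQ}(C)} Q_{\gamma_Q(C)} = 0\} \to \Gm,\, \sigma_{\alpha,\beta}\right],
\]
the complement appearing because the one-dimensional face polynomials can genuinely vanish on $\Gm^2$. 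By Notation~\ref{notation-identification-action}, this class is constant along $C \cap (\NN_{\geq 1})^2$ once $\sigma_{\alpha,\beta}$ is replaced by $\sigma_C$, so it factors out of the sum:
\[
Z_{f-c,\omega,C}^{\delta,(\neq,\neq)}(T) = \left[\frac{(P-cQ)_{\gamma_{P-cQ}(C)}}{Q_{\gamma_Q(C)}}\colon \Gm^2 \setminus \{\cdot\} \to \Gm,\, \sigma_C\right] \cdot S^{\delta}_{C,(\neq,\neq)}(T),
\]
where $S^{\delta}_{C,(\neq,\neq)}(T) = \sum_{n \geq 1}\sum_{(\alpha,\beta) \in C^{\delta}_{(\neq,\neq),n}} \LL^{-\nu\alpha-\beta} T^n$.

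Finally, in the nontrivial case $((a_0-a_1,b_0-b_1)\mid(p,q)) > 0$, apply Lemma~\ref{lemmedescones}\eqref{cone2} with $\vec{\omega} = (p,q)$, $\phi = \ell_{(a_0-a_1,b_0-b_1)}$, and $\eta(\alpha,\beta) = \nu\alpha + \beta$: the hypothesis $\phi(\vec{\omega}) > 0$ is exactly our sign condition, so $S^{\delta}_{C,(\neq,\neq)}(T)$ is rational and $\lim_{T \to \infty} S^{\delta}_{C,(\neq,\neq)}(T) = \chi_c(C) = -1$. Taking $-\lim_{T \to \infty}$ gives $-(-1)[\cdot] = [\cdot]$, which matches $-\eps_C[\cdot]$ with $\eps_C = -1$. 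No real obstacle arises: the proof is a direct one-dimensional specialization of Case A of Proposition~\ref{conedim2}, the only point needing care being the shift from the monomial face polynomials of the zero-dimensional case to the genuine polynomial face polynomials here, which is what forces the ambient variety in the motivic class to be $\Gm^2 \setminus \{(P-cQ)_{\gamma_{P-cQ}(C)} Q_{\gamma_Q(C)} = 0\}$ rather than $\Gm^2$.
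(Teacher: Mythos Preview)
Your proof is correct and follows essentially the same approach as the paper's own proof: parametrize $C$ by $k(p,q)$, dispose of the case $((a_0-a_1,b_0-b_1)\mid(p,q))\leq 0$ by emptiness, choose $\delta_0$ so that $C^{\delta}_{(\neq,\neq)}=C$, compute the motivic measure of $X^{(\neq,\neq)}_{(kp,kq)}$ with the complement of the vanishing locus, factor out the resulting class via Notation~\ref{notation-identification-action}, and apply Lemma~\ref{lemmedescones}\eqref{cone2}. The only difference is presentational: you spell out more explicitly why the ambient variety is $\Gm^2\setminus\{(P-cQ)_{\gamma_{P-cQ}(C)}Q_{\gamma_Q(C)}=0\}$ rather than $\Gm^2$.
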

\begin{proof} We proceed as in the proof of Proposition \ref{conedim2}.
		As $\C=\RR_{>0}(p,q)$, we have 
			\[\C_{(\neq,\neq)}^{\delta} = 
			\left\{\begin{array}{c|l}
					k(p,q) & 
					\begin{array}{l}
						k( (p,q)\mid (a_0-a_1,b_0-b_1)) > 0 \\
						k ( (p,q) \mid (a_1,b_1) ) \leq \delta k ( (p,q)\mid (a_0-a_1,b_0-b_1))
					\end{array}
				\end{array}
			\right\}.
		\]
		If $((a_0-a_1,b_0-b_1)\mid(p,q))\leq 0$ then $\C_{(\neq,\neq)}^{\delta}=\emptyset$ and $Z_{f-c,\omega,\C}^{\delta,(\neq,\neq)}(T) = 0$. 

		Otherwise, for 	$\delta > \frac{((a_1,b_1)\mid(p,q))}{((a_0-a_1,b_0-b_1)\mid(p,q))}$ we have $\C_{(\neq,\neq)}^{\delta} = \C$ and 
		\[Z_{f-c,\omega,\C}^{\delta,(\neq,\neq)}(T) = 
		\sum_{k\geq 1}\LL^{-(\nu-1)kp}\mes\big(X_{(kp,kq)}^{\neq,\neq}\big) T^{k(p(a_0-a_1) + q(b_0-b_1))}.\]
		It follows from the definition of the motivic measure, and Notation \ref{notation-identification-action}, that 
		\[
			\mes\big(X_{(kp,kq)}^{\neq,\neq}\big)=\LL^{-k(p+q)}
			\left[\frac{(P-cQ)_{\gamma_{P-cQ}(\C)}}{Q_{\gamma_{Q}(\C)}} \colon 
				\Gm^2 \setminus \{(P-cQ)_{\gamma_{P-cQ}(\C)}Q_{\gamma_{Q}(\C)}=0\} \to \Gm, \sigma_{C}
			\right]
		\]
		and we conclude applying geometric summation and Lemma \ref{lemmedescones}.
	\end{proof}
	\subsection{Rationality and limit of $Z_{f-c,\omega,\C}^{\delta,(=,\neq)}(T)$, $Z_{f-c,\omega,\C}^{\delta,(\neq,=)}(T)$ and $Z_{f-c,\omega,\C}^{\delta,(=,=)}(T)$} \label{section-eqneq} \label{section-eqeq} \label{section-neqeq} \label{sectionnonneqneq}
In this section we consider $C \in \E_c$ and study first the motivic zeta function $Z_{f-c,\omega,\C}^{\delta,(=,\neq)}(T)$ in Proposition \ref{prop:rationalityZeqneq}, then similarly the motivic zeta functions $Z_{f-c,\omega,\C}^{\delta,(\neq,=)}(T)$ and $Z_{f-c,\omega,\C}^{\delta,(=,=)}(T)$ in Proposition \ref{zfautrescas}.

\begin{rem}\label{notationseqleq}
	We consider arcs $(x(t),y(t))$ such that 
	\[(P-cQ)_{\gamma_{P-cQ}(\C)}(\ac{x(t)}, \ac{y(t)}) = 0\:\text{and}\: Q_{\gamma_{Q}(\C)}(\ac{x(t)}, \ac{y(t)}) \neq 0.\] 
	Then 
	\begin{itemize}
		\item  $\C=\RR_{>0}(p,q)$ with $(p,q)\in (\NN_{\geq 1})^2$ a primitive vector. The face $\gamma_{P-cQ}(\C)$ has dimension $1$ and is supported by a line of equation $ap+bq=N_c$. 
		\item  There is a root $\mu \in \R_{\gamma_{P-cQ}(\C)} \setminus \R_{\gamma_{Q(\C)}}$ such that 
			$(\ac{y(t)})^p = \mu (\ac{x(t)})^q$ and we have the conditions
			\[
				\begin{cases} 
					\Ord{(f-c)(x(t),y(t))}=\Ord{(P-cQ)(x(t),y(t))}-\m_Q(\Ord{x(t)}, \Ord{y(t)})\\
					\Ord{(P-cQ)(x(t),y(t))}>\max\left( \m_{P-cQ}(\Ord{x(t)}, \Ord{y(t)}), \m_Q(\Ord{x(t)}, \Ord{y(t)}) \right)\\
					\m_Q(\Ord{x(t)}, \Ord{y(t)})\leq [\Ord{(P-cQ)(x(t),y(t))}-\m_Q(\Ord{x(t)}, \Ord{y(t)})]\delta
				\end{cases}.
			\]
	\end{itemize}
\end{rem}
\begin{rem}
We consider a point $(a_1,b_1) \in \gamma_Q(\C)$ and set $N:=a_1p+b_1q$.

By additivity of the measure, the series $Z_{f-c,\omega,\C}^{\delta,(=,\neq)}(T)$ has the following decomposition 
\[
	Z_{f-c,\omega,\C}^{\delta,(=,\neq)}(T) = \sum_{\mu \in R_{\gamma_{P-cQ}(\C)}\setminus R_{\gamma_{Q}(\C)}}
	\sum_{n \geq 1} 
	\big(
		\sum_{\underset{n=m-\m_{Q}(\alpha,\beta)}{((\alpha,\beta),m)\in \C_{(=,\neq)}^{\delta}\cap (\NN_{\geq 1})^3}}
		\LL^{-(\nu-1)\alpha}\mes \big(X_{(\alpha,\beta),m,\mu}^{(=,\neq)}\big)
	\big)
	T^{n}
\]
where we consider the cone 
\[
	\C_{(=,\neq)}^{\delta} = \{((\alpha,\beta),m)\in \C \times \RR_{>0} \mid 
	m>\max(\m_{P-cQ}(\alpha,\beta),\:\m_{Q}(\alpha,\beta)),\ \m_{Q}(\alpha,\beta)\leq (m - \m_{Q}(\alpha,\beta)) \delta\}
\]
and for any $((\alpha,\beta),m) \in (\NN_{\geq 1})^3$ and $\mu \in R_{\gamma_{P-cQ}(\C)}\setminus R_{\gamma_{Q}(\C)}$, we define the arc space
\[
	X_{(\alpha,\beta),m,\mu}^{(=,\neq)}=
	\left\{
		\begin{array}{c|l}
			(x(t),y(t))\in \L(\A^2_{\k}) & 
			\begin{array}{l}
				\Ord{x(t)}=\alpha, \Ord{y(t)}=\beta, \ac{y(t)}^p = \mu \ac{x(t)}^q \\
				Q_{\gamma_{Q}(\C)}(\Ac{x(t)},\Ac{y(t)})\neq 0\\ \Ord{(P-cQ)(x(t),y(t))}=m
			\end{array}
		\end{array}
	\right\}.
\]
\end{rem}

\begin{lem} \label{lem:mesXnmueqneq} 
	Let $((\alpha,\beta),m) \in \C_{(=,\neq)}^{\delta}\cap \NN^3$, 
	let $\mu \in R_{\gamma_{P-cQ}(\C)}\setminus R_{\gamma_{Q}(\C)}$ 
	and $\tsigma_{(p,q,\mu)}$ be the induced Newton transform (Definition \ref{defn:Newton-map-local}). 
	Writing $(\alpha,\beta)=(pk,qk)$, for some $k>0$, we have the equality
	\[
		\mes\big(X_{(\alpha,\beta),m,\mu}^{(=,\neq)}\big)=\LL^{-(p+q-1)k} \mes\big(Y_{(m,k)}^{{\tsigma_{(p,q,\mu)}}}\big)
	\]
	with 
	$Y_{(m,k)}^{{\tsigma_{(p,q,\mu)}}} = 
	\left\{ 
		\begin{array}{c|c}
			\left(v(t),w(t)\right) \in \L(\AA^2_k) &
			\begin{array}{l} 
				\Ord{v(t)}=k,\:\Ord{w(t)}>0\\ 
				\Ord{(P-cQ)_{\tsigma_{(p,q,\mu)}}(v(t),w(t))} = m\:\\ 
				\Ord{Q_{\tsigma_{(p,q,\mu)}}(v(t),w(t))} = \m_{Q}(pk,qk)=kN 
			\end{array}
		\end{array} 
	\right\}.
	$ 

If $m\leq \m_{P-cQ}(\alpha,\beta)$ then the set $Y_{(m,k)}^{{\tsigma_{(p,q,\mu)}}}$ is empty.
\end{lem}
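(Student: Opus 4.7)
The plan is to realize the Newton transformation $\tsigma_{(p,q,\mu)}$ as an explicit change of variables between the two arc spaces, and to read off the Jacobian contribution. Writing $(\alpha,\beta)=(pk,qk)$, I would first establish a bijection $Y_{(m,k)}^{\tsigma_{(p,q,\mu)}} \to X_{(\alpha,\beta),m,\mu}^{(=,\neq)}$ induced by the substitution
\[
(v(t),w(t)) \longmapsto (x(t),y(t)) = \bigl(\mu^{q'}v(t)^p,\;v(t)^q(w(t)+\mu^{p'})\bigr).
\]
A direct check shows that if $\Ord{v(t)}=k$ and $\Ord{w(t)}>0$ then $\Ord{x(t)}=pk$, $\Ord{y(t)}=qk$, and, using $pp'-qq'=1$, one has $\ac{y(t)}^{p}=\mu\,\ac{x(t)}^{q}$. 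Conversely, given an arc in $X_{(\alpha,\beta),m,\mu}^{(=,\neq)}$, Hensel's lemma (in characteristic zero) produces exactly $p$ arcs $v(t)$ with $v(t)^{p}=\mu^{-q'}x(t)$; the angular-component constraint $\ac{y(t)}^{p}=\mu\,\ac{x(t)}^{q}$ then selects the unique one for which $w(t):=v(t)^{-q}y(t)-\mu^{p'}$ has positive order, yielding a bijection on $\k$-points.

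Second, I would verify that the defining constraints match under the substitution. By the very definition of the Newton transform, $(P-cQ)(x(t),y(t))=(P-cQ)_{\tsigma_{(p,q,\mu)}}(v(t),w(t))$ and $Q(x(t),y(t))=Q_{\tsigma_{(p,q,\mu)}}(v(t),w(t))$, so the order conditions become $\Ord{(P-cQ)_{\tsigma}(v(t),w(t))}=m$ and, by Remark \ref{remannulation}, $\Ord{Q_{\tsigma}(v(t),w(t))}=\m_{Q}(pk,qk)=kN$. Since $\mu\notin \R_{\gamma_{Q}(\C)}$, Lemma \ref{lem:Newton-alg}(2) in fact gives $Q_{\tsigma}(v,w)=v^{N}\unit(v,w)$ with $\unit(0,0)\neq 0$, so this last order identity is automatic once $\Ord{v(t)}=k$.

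Third, I would obtain the measure identity from the Jacobian of the substitution, which equals
\[
\det\!\frac{\partial(x,y)}{\partial(v,w)} = \det\!\begin{pmatrix} p\mu^{q'}v^{p-1} & 0 \\ qv^{q-1}(w+\mu^{p'}) & v^{q} \end{pmatrix} = p\mu^{q'}v^{p+q-1},
\]
and has order exactly $(p+q-1)k$ along any arc with $\Ord{v(t)}=k$ since $p\mu^{q'}\in\Gm$. Because the angular-component constraint restricts $\tsigma_{(p,q,\mu)}$ to the single sheet on which the degree-$p$ cover becomes a bijection, the change of variables formula for motivic integration (\cite{DenLoe99a,CNS}) applies and yields the equality $\mes(X_{(\alpha,\beta),m,\mu}^{(=,\neq)})=\LL^{-(p+q-1)k}\mes(Y_{(m,k)}^{\tsigma_{(p,q,\mu)}})$. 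For the emptiness claim, I would invoke Lemma \ref{lem:Newton-alg}(3): since $\mu\in\R_{\gamma_{P-cQ}(\C)}$, we have $(P-cQ)_{\tsigma}(v,w)=v^{N_c}Q_1(v,w)$ with $Q_1(0,0)=0$ and $N_c=\ell_{(p,q)}(\gamma_{P-cQ}(\C))$; for any arc with $\Ord{v(t)}=k$ and $\Ord{w(t)}>0$ the origin lies in $\{Q_1=0\}$, forcing $\Ord{(P-cQ)_{\tsigma}(v(t),w(t))}>kN_c=\m_{P-cQ}(\alpha,\beta)$, and hence no arc in $Y_{(m,k)}^{\tsigma}$ can satisfy $m\leq\m_{P-cQ}(\alpha,\beta)$.

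The main obstacle I anticipate is the third step: carefully justifying the Jacobian change of variables even though $\tsigma_{(p,q,\mu)}$ has degree $p$ away from $\{v=0\}$ and is not birational on the nose. The key point is precisely the Hensel argument of the first step, which converts the $p$-fold cover into a bijection compatible with the angular-component constraint, so that the birational change-of-variables machinery of Denef--Loeser applies without any correction factor beyond the Jacobian order $(p+q-1)k$.
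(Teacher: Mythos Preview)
Your argument is correct and is exactly the approach underlying the references the paper cites (\cite[Lemma~3.3]{CassouVeys13} and \cite[Proposition~6]{carai-antonio}): realize the Newton transform as a change of variables on arcs, use Hensel's lemma together with $\gcd(p,q)=1$ to turn the degree-$p$ map into a bijection between the two constrained arc sets, and extract the factor $\LL^{-(p+q-1)k}$ from the Jacobian order. Regarding the obstacle you flag, in those references the third step is carried out not by invoking the abstract change-of-variables theorem on a non-birational map, but by a direct computation at the level of truncated jet spaces (showing the induced map on $n$-jets is a piecewise trivial fibration of the expected relative dimension), which is the clean way to make your heuristic rigorous.
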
 
\begin{proof}
	The proof is similar to that of \cite[Lemma 3.3]{CassouVeys13} (see also \cite[Proposition 6]{carai-antonio}).
\end{proof}

\begin{prop}\label{prop:rationalityZeqneq}  
	With above notation, we have the decomposition 
	\begin{equation} \label{eqZdelta<} 
			Z_{f-c,\omega,\C}^{\delta,(=,\neq)}(T)  =  
\sum_{\mu \in R_{\gamma_{P-cQ}}(\C)\setminus R_{\gamma_{Q}(\C)}}  \big(Z_{f_{\tsigma_{(p,q,\mu)}},\omega_{p,q,\mu}}^{\delta}(T)\big)_{((0,0),c)}
	\end{equation}
	with $\omega_{p,q}(v,w)=v^{\nu p + q - 1}dv \wedge dw$. In particular, for $\delta$ large enough, we have
	\[
		-\lim_{T \ra \infty}	
		Z_{f-c,\omega,\C}^{\delta,(=,\neq)}(T)  =  
		\sum_{\mu \in R_{\gamma_{P-cQ}}(\C)\setminus R_{\gamma_{Q}(\C)}}  S_{f_{\tsigma_{(p,q,\mu)}, (0,0),c}} \in \mgg.
	\]
\end{prop}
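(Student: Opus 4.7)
The plan is to use the root decomposition provided by Remark~\ref{notationseqleq} together with the measure identity of Lemma~\ref{lem:mesXnmueqneq} to identify, term by term, the series $Z_{f-c,\omega,\C}^{\delta,(=,\neq)}(T)$ with a sum of motivic zeta functions of the Newton transforms $f_{\tsigma_{(p,q,\mu)}}$ at $((0,0),c)$, with a differential form whose weight absorbs the Jacobian of $\tsigma_{(p,q,\mu)}$.

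First, I would fix the decomposition already written in the excerpt:
\[
Z_{f-c,\omega,\C}^{\delta,(=,\neq)}(T) = \sum_{\mu}\sum_{n\geq 1}\Bigl(\sum_{((\alpha,\beta),m)}\LL^{-(\nu-1)\alpha}\mes\bigl(X_{(\alpha,\beta),m,\mu}^{(=,\neq)}\bigr)\Bigr)T^{n},
\]
with $\mu$ running over $R_{\gamma_{P-cQ}(\C)}\setminus R_{\gamma_{Q}(\C)}$ and $((\alpha,\beta),m)\in \C_{(=,\neq)}^{\delta}\cap(\NN_{\geq 1})^3$ subject to $n=m-\m_Q(\alpha,\beta)$. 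Since $\C=\RR_{>0}(p,q)$ with $\gcd(p,q)=1$, the only $(\alpha,\beta)\in\C\cap\NN^2$ are of the form $(pk,qk)$ for $k\geq 1$, so the inner sum is over pairs $(m,k)$ with $k\geq 1$, $m>\max(\m_{P-cQ}(pk,qk),\m_Q(pk,qk))$ and $kN\leq(m-kN)\delta$, where $N=\m_Q(p,q)=pa_1+qb_1$.

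Next I would apply Lemma~\ref{lem:mesXnmueqneq} to rewrite each measure as
\[
\LL^{-(\nu-1)pk}\mes\bigl(X_{(pk,qk),m,\mu}^{(=,\neq)}\bigr)=\LL^{-(\nu-1)pk-(p+q-1)k}\mes\bigl(Y_{(m,k)}^{\tsigma_{(p,q,\mu)}}\bigr)=\LL^{-(\nu p+q-1)k}\mes\bigl(Y_{(m,k)}^{\tsigma_{(p,q,\mu)}}\bigr).
\]
The exponent $\nu p+q-1$ is exactly one less than the exponent of $v$ in $\omega_{p,q,\mu}(v,w)=v^{\nu p+q-1}\,dv\wedge dw$, so this precisely matches the weight that appears in the motivic zeta function of $f_{\tsigma_{(p,q,\mu)}}$ associated with the differential form $\omega_{p,q,\mu}$. (This exponent is the Jacobian computation for $\tsigma_{(p,q,\mu)}$, namely $(dx\wedge dy)\circ\tsigma_{(p,q,\mu)}=p\mu^{q'}v^{p+q-1}\,dv\wedge dw$, combined with the substitution $x=\mu^{q'}v^p$.)

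Then I would match the indices: by construction of $Y_{(m,k)}^{\tsigma_{(p,q,\mu)}}$, an arc $(v(t),w(t))$ satisfies $\Ord v(t)=k$, $\Ord{(P-cQ)_{\tsigma_{(p,q,\mu)}}(v,w)}=m$, $\Ord{Q_{\tsigma_{(p,q,\mu)}}(v,w)}=kN$, whence
\[
\Ord{\bigl((P-cQ)_{\tsigma_{(p,q,\mu)}}/Q_{\tsigma_{(p,q,\mu)}}\bigr)(v,w)}=m-kN=n,\qquad \Ord{F_{\tsigma}}(v,w)=kN\leq n\delta,
\]
where $F_{\tsigma}$ denotes the transformed indeterminacy locus; the $\Gm$-equivariant structure via $\ac{f_{\tsigma_{(p,q,\mu)}}-c}$ is preserved because $\tsigma_{(p,q,\mu)}$ commutes with the standard $\Gm$-action on arcs up to a change of weight absorbed in Notation~\ref{notation-identification-action}. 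Reindexing the double sum over $(m,k)$ as a double sum over $(n,k)$ identifies the contribution of $\mu$ with $\bigl(Z_{f_{\tsigma_{(p,q,\mu)}},\omega_{p,q,\mu}}^{\delta}(T)\bigr)_{((0,0),c)}$, giving formula~\eqref{eqZdelta<}. Finally, applying Theorem~\ref{rationalite} term by term yields the rationality and the claimed limit in $\mgg$.

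The main technical obstacle is the bookkeeping of the three combined constraints: the cone condition $(\alpha,\beta)\in C$, the strict inequality $m>\m_{P-cQ}(\alpha,\beta)$ (which encodes $\mu\in R_{\gamma_{P-cQ}(\C)}$, i.e.\ that the leading term in $P-cQ$ vanishes along the arc), and the control $\m_Q(\alpha,\beta)\leq n\delta$ (the $\ord\F\leq n\delta$ condition in the ambient motivic zeta function). One must check that through the Newton transformation these become exactly the defining conditions of the stratum $\Ord v=k$ inside the zeta function of $f_{\tsigma_{(p,q,\mu)}}$ at $((0,0),c)$ (with the same $\delta$, up to a finite number of exceptional $k$ that are absorbed when $\delta$ is taken large enough, as in the proof of Proposition~\ref{conedim2}), and in particular that no arc outside the image of $\tsigma_{(p,q,\mu)}$ is picked up—which is precisely where the assumption $\mu\notin R_{\gamma_Q(\C)}$ is used, guaranteeing $Q_{\tsigma}(0,0)\neq0$ after the transformation along the $w$-axis.
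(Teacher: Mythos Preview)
Your plan is correct and follows essentially the same route as the paper: decompose over roots $\mu$, parametrize $(\alpha,\beta)=(pk,qk)$, apply Lemma~\ref{lem:mesXnmueqneq} to convert the measure into $\LL^{-(\nu p+q-1)k}\mes(Y_{(m,k)}^{\tsigma_{(p,q,\mu)}})$, and then identify the result with the zeta function of $f_{\tsigma_{(p,q,\mu)}}$ for the form $\omega_{p,q}$.

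One small correction: your last paragraph suggests the matching of constraints holds only ``up to a finite number of exceptional $k$ absorbed when $\delta$ is large''. In fact the identity \eqref{eqZdelta<} is \emph{exact} for every $\delta$. The point is that the extra constraint $m>\m_{P-cQ}(pk,qk)=kN_c$ coming from the ``$=$'' condition is automatically satisfied by every arc counted in $(Z_{f_{\tsigma_{(p,q,\mu)}},\omega_{p,q}}^{\delta}(T))_{((0,0),c)}$: since $\mu\in R_{\gamma_{P-cQ}(\C)}$, Lemma~\ref{lem:Newton-alg}(3) gives $(P-cQ)_{\tsigma_{(p,q,\mu)}}=v^{N_c}Q_1$ with $Q_1(0,0)=0$, so any arc with $\Ord v=k>0$, $\Ord w>0$ has $\Ord(P-cQ)_{\tsigma}>kN_c$. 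Likewise, since $\mu\notin R_{\gamma_Q(\C)}$ (as you noted), $Q_{\tsigma_{(p,q,\mu)}}=v^N\unit$ with $\unit(0,0)\neq 0$, so the condition $\Ord Q_{\tsigma}\leq n\delta$ becomes exactly $kN\leq n\delta$, matching the original $\ord\F$ bound on the nose. No asymptotic-in-$\delta$ argument is needed for the decomposition itself; $\delta$ large enough is only invoked afterwards, when taking the limit via Theorem~\ref{rationalite}.
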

	\begin{proof} 
		Let $(\alpha,\beta)\in \C$. There is $k>0$ such that 
		$(\alpha,\beta)=(pk,qk)$, so  
		$\m_Q(\alpha,\beta) = \m_Q(pk,qk) = k{N}.$
			The intersection $\C_{(=,\neq)}^{\delta} \cap (\NN_{\geq 1})^3$ is homeomorphic to $\overline{\C}_{(=,\neq)}^{\delta} \cap (\NN_{\geq 1})^2$, with
		\[\overline{\C}_{(=,\neq)}^{\delta} = 
			\left\{ 
				\begin{array}{c|l} 
					(k,m) \in \RR_{>0}^2 & 
					\begin{array}{l} 
						\max(\m_{P-cQ}(pk,qk),\:\m_{Q}(pk,qk))<m\\
						\m_{Q}(pk,qk)\leq (m - \m_{Q}(pk,qk)) \delta
					\end{array} 
				\end{array}
			\right\}.
		\]
For any integer $n$, we consider 
\(
	\overline{\C}_{(=,\neq),n}^{\delta}=\left\{k\in \RR_{>0} \mid (k,n+kN) \in \overline{\C}_{(=,\neq)}^{\delta}\right\}
\)
and
\[
	\C_{(=,\neq),n}^{\delta}=\left\{(\alpha,\beta)\in (\RR_{>0})^2 \mid 
	((\alpha,\beta),n+\m_{Q}(\alpha,\beta)) \in \C_{(=,\neq)}^{\delta}\right\}.
\]
By Remark \ref{notationseqleq} and Lemma \ref{lem:mesXnmueqneq}, we have 
\[ 
	Z_{f-c,\omega,\C}^{\delta,(=,\neq)}(T) 
	=  \suml_{\mu \in R_{\gamma_{P-cQ}(\C)}\setminus R_{\gamma_{Q}(\C)}} 
	\sum_{n\geq 1} 
	\big(
		\suml_{k \in \overline{\C}_{(=,\neq),n}^{\delta} \cap \NN^*}
		\LL^{-(\nu p+q-1)k} \mes\big(Y_{(n+kN,k)}^{\tsigma_{(p,q,\mu)}}\big)
	\big)
	T^{n}.
\]
For any root $\mu \in R_{\gamma_{P-cQ}(\C)}\setminus R_{\gamma_{Q}(\C)}$, by definition of the zeta function in formula (\ref{zeta1}), we have  
\[ 
	(Z_{f_{\tsigma_{(p,q,\mu)}},\omega_{p,q}}^{\delta}(T))_{((0,0),c)} =  
	\sum_{n \geq 1} \big(\sum_{k \geq 1} \LL^{-(\nu p+q-1)k}
	\mes\big(Y_{n,k,c}^{\delta,{\tsigma_{(p,q,\mu)}}}\big)\big)T^{n} 
\]
with $\omega_{p,q}=v^{\nu p +q-1}dv\wedge dw$ and for any $n \geq 1$, $k \geq 1$
\[
	Y_{n,k,c}^{\delta,{\tsigma_{(p,q,\mu)}}}  =   
	\left\{
		\begin{array}{c|c} 
			\left(v(t),w(t)\right) \in \L(\AA^2_k) &
			\begin{array}{l} 
				\Ord{v(t)} = k, \Ord{w(t)} >0\\ 
				\Ord{\omega_{p,q}(v(t),w(t))} = (\nu p+q-1)k \\
				\Ord{(f_{\tsigma_{(p,q,\mu)}}-c)(v(t),w(t))} = n \\
				\Ord{Q_{\tsigma_{(p,q,\mu)}}(v(t),w(t))}\leq n\delta
			\end{array}
		\end{array} 
	\right\}.
\]
but $Q_{\tsigma(p,q,\mu)}=v^N\unit(v,w)$ with $\unit(0,0)\neq 0$, and for any $(v(t),w(t))\in Y_{n,k,c}^{\delta,{\tsigma_{(p,q,\mu)}}}$ we have
\[\ord{(P-cQ)_{\tsigma(p,q,\mu)}(v(t),w(t))}>k N_{c}\:\:\text{namely}\:\:n+\ord{Q_{\tsigma(p,q,\mu)}(v(t),w(t))}>k N_c,\]
and we obtain (\ref{eqZdelta<}) thanks to  
\[
	(Z_{f_{\tsigma_{(p,q,\mu)}},\omega_{p,q}}^{\delta}(T))_{((0,0),c)} = \sum_{n\geq 1} 
	\sum_{k \in \overline{\C}_{(=,\neq),n}^{\delta} \cap \NN^*}
	\LL^{-(\nu p+q-1)k} \mes\big(Y_{(n+kN,k)}^{{\tsigma_{(p,q,\mu)}}}\big)T^{n}.
\]
\end{proof}

\begin{prop} \label{zfautrescas}
	With above notation, we have the decomposition 
	\begin{equation} \label{eqZdelta<bis} 
		Z_{f-c,\omega,\C}^{\delta,(\neq,=)}(T)  =  
		\sum_{\mu \in R_{\gamma_{Q}}(\C)\setminus R_{\gamma_{(P-cQ)}}(\C)}  
		\big(Z_{f_{\tsigma_{(p,q,\mu)}},\omega_{p,q,\mu}}^{\delta}(T)\big)_{((0,0),c)}
	\end{equation}
	\begin{equation} \label{eqZdelta<bisbis} 
		Z_{f-c,\omega,\C}^{\delta,(=,=)}(T)  =  
		\sum_{\mu \in R_{\gamma_{Q}}(\C)\cap R_{\gamma_{(P-cQ)}}(\C)}  
		\big(Z_{f_{\tsigma_{(p,q,\mu)}},\omega_{p,q,\mu}}^{\delta}(T)\big)_{((0,0),c)}
	\end{equation}
	with $\omega_{p,q}(v,w)=v^{\nu p + q - 1}dv \wedge dw$. In particular, for $\delta$ large enough, we have
	\[
		-\lim_{T \ra \infty}	
		Z_{f-c,\omega,\C}^{\delta,(\neq,=)}(T)  =  
		\sum_{\mu \in R_{\gamma_{Q}}(\C)\setminus R_{\gamma_{(P-cQ)}}(\C)}  S_{f_{\tsigma_{(p,q,\mu)}},(0,0),c} \in \mgg
	\]
	and 
	\[
		-\lim_{T \ra \infty}	
		Z_{f-c,\omega,\C}^{\delta,(=,=)}(T)  =  
		\sum_{\mu \in R_{\gamma_{Q}}(\C)\cap R_{\gamma_{(P-cQ)}}(\C)}  S_{f_{\tsigma_{(p,q,\mu)}},(0,0),c} \in \mgg.
	\]
\end{prop}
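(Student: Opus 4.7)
The plan is to mimic, mutatis mutandis, the three-step structure used for $Z_{f-c,\omega,\C}^{\delta,(=,\neq)}(T)$ in the proof of Proposition~\ref{prop:rationalityZeqneq}: (i) reduce to the one-dimensional cone case and classify arcs by roots of a face polynomial, (ii) prove an analog of Lemma~\ref{lem:mesXnmueqneq} identifying the fibers of $X_{(\alpha,\beta),m,\mu}^{(\blacktriangle,\blacktriangledown)}$ with arc spaces on the Newton transform, and (iii) reassemble the result into the motivic zeta function of $f_{\tsigma_{(p,q,\mu)}}$ at $((0,0),c)$ with the twisted form $\omega_{p,q}$.

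For the $(\neq,=)$ case I would first observe that the vanishing condition $Q_{\gamma_{Q}(\C)}(\ac{x(t)},\ac{y(t)})=0$ forces $\dim \gamma_{Q}(\C) = 1$, hence $\dim C = 1$, and so $C = \RR_{>0}(p,q)$ with $(p,q)$ primitive in $(\NN_{\geq 1})^2$. Since $Q_{\gamma_{Q}(\C)}$ is quasi-homogeneous of type $(p,q)$, the vanishing locus on the torus is exactly $\bigsqcup_{\mu \in R_{\gamma_{Q}(\C)}}\{y^p = \mu x^q\}$; the extra requirement $(P-cQ)_{\gamma_{P-cQ}(\C)}(\ac x,\ac y)\neq 0$ restricts $\mu$ to $R_{\gamma_{Q}(\C)}\setminus R_{\gamma_{P-cQ}(\C)}$. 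For each such $\mu$, apply $\tsigma_{(p,q,\mu)}$: writing $(\alpha,\beta)=(pk,qk)$, the change of variables $x = \mu^{q'}v^p$, $y = v^q(w+\mu^{p'})$ has Jacobian of order $(p+q-1)k$ in $t$, and the role of the two polynomials is now exchanged relative to Lemma~\ref{lem:mesXnmueqneq}: the fiber becomes
\[
Y_{(m,k)}^{\tsigma_{(p,q,\mu)}} = \left\{(v(t),w(t))\in \L(\AA^2_\k) \;\middle|\;
\begin{array}{l}
\Ord{v(t)}=k,\; \Ord{w(t)}>0,\\
\Ord{(P-cQ)_{\tsigma_{(p,q,\mu)}}(v(t),w(t))} = \m_{P-cQ}(pk,qk),\\
\Ord{Q_{\tsigma_{(p,q,\mu)}}(v(t),w(t))} = m
\end{array}\right\}
\]
(note the swap: the $(P-cQ)$-order is now pinned at the minimum, while $Q_{\tsigma}$ picks up extra order $m > \m_{Q}(pk,qk)$). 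Reassembling as in the end of the proof of Proposition~\ref{prop:rationalityZeqneq}, using that $(P-cQ)_{\tsigma_{(p,q,\mu)}} = v^{\m_{P-cQ}(p,q)}\,\unit(v,w)$ with $\unit(0,0)\neq 0$ (by Lemma~\ref{lem:Newton-alg}.2 for this $\mu$), gives exactly~\eqref{eqZdelta<bis} after recognizing the outer sum as the definition of $\bigl(Z_{f_{\tsigma_{(p,q,\mu)}},\omega_{p,q}}^{\delta}(T)\bigr)_{((0,0),c)}$ with $\omega_{p,q}=v^{\nu p+q-1}dv\wedge dw$.

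For the $(=,=)$ case the same argument applies, except that now both face polynomials vanish simultaneously on $\{y^p=\mu x^q\}$, which is possible only when $\mu\in R_{\gamma_{P-cQ}(\C)}\cap R_{\gamma_{Q}(\C)}$. After the Newton transform, neither order is pinned to its minimum; instead the constraints become $\Ord{(P-cQ)_{\tsigma}(v,w)} = m$ and $\Ord{Q_{\tsigma}(v,w)} = m-n$ with both strictly greater than the respective $\m$-values, which matches exactly the definition of $Y_{n,k,c}^{\delta,\tsigma_{(p,q,\mu)}}$ used in the Newton-transform zeta function. Passing to the limit $T\to\infty$ for $\delta$ large enough, by rationality and the general stabilization result stated in Theorem~\ref{thmSf00c}, yields the two claimed identities in $\mgg$.

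The main obstacle I expect is bookkeeping the conditions across the Newton transform: verifying that the polyhedral domain $\overline{\C}_{(\blacktriangle,\blacktriangledown),n}^{\delta}$ of summation matches the $(n,k)$-range implicit in the definition of the Newton-transform zeta function, and checking that no arc is counted twice between the $(\neq,=)$ and $(=,=)$ decompositions (which is guaranteed by the disjointness of $R_{\gamma_{Q}(\C)}\setminus R_{\gamma_{P-cQ}(\C)}$ and $R_{\gamma_{Q}(\C)}\cap R_{\gamma_{P-cQ}(\C)}$, but must be combined with the analogous decomposition in Proposition~\ref{prop:rationalityZeqneq}). Once these checks are in place, the remainder is a routine application of Lemma~\ref{lem:Newton-alg}, the change-of-variables formula for the motivic measure, and the rationality results already established in the ambient framework.
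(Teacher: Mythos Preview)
Your proposal is correct and follows exactly the approach the paper takes: the paper's proof of Proposition~\ref{zfautrescas} is the single sentence ``The proof is similar to that of Proposition~\ref{prop:rationalityZeqneq},'' and what you have written is precisely the mutatis mutandis adaptation that sentence points to. Your identification of the role-swap (for $(\neq,=)$ it is $(P-cQ)_{\tsigma}$ that becomes a monomial times a unit via Lemma~\ref{lem:Newton-alg}, while $Q_{\tsigma}$ picks up the extra order; for $(=,=)$ neither is pinned) and your bookkeeping concerns about matching $\overline{\C}^{\delta}_{(\blacktriangle,\blacktriangledown),n}$ to the summation range in the Newton-transform zeta function are the right points, and they resolve exactly as in the $(=,\neq)$ case.
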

\begin{proof}
	The proof is similar to that of Proposition \ref{prop:rationalityZeqneq}. 
\end{proof}
\subsection{Base cases} 
\begin{rem}\label{rembasecases}
	Applying the Newton algorithm simultaneously for $P-cQ$ and $Q$,  we have to consider (by Lemma \ref{lem:Newton-alg} and Definition \ref{def:algo-Newton}) the following base cases 	
	\[x^My^m \unit(x,y) \ \text{or} \ x^M(y-\mu x^q+g(x,y))^mu(x,y)\]
	with $(M,m)\in \NN^2$, $q\in \NN^*$, $g(x,y)=\suml_{a+bq>q}c_{a,b}x^ay^b$ and $\unit\in \k[x,y]$ with $\unit(0,0)\neq 0$.
	
	We only need to consider the following cases (\ref{monmon}) and (\ref{dim1dim1}): 
	\begin{equation} \label{monmon}
		P-cQ = x^{M_1}y^{m_1}\unit_{1}(x,y)\:\:\text{and}\:\: Q=x^{M_2}y^{m_2}\unit_2(x,y)
	\end{equation}
	with $(M_1,M_2)\in \NN^2$, $(m_1,m_2)\in \NN^2$ and $\unit_1$, $\unit_2$ are units,
	\begin{equation} \label{dim1dim1}
		\begin{cases}
			(P-cQ)(x,y)= x^{M_1}(y- \mu x^{q}+g(x,y))^{m_1}\unit_1(x,y)\\
			Q(x,y)= x^{M_2}(y- \mu x^{q}+g(x,y))^{m_2}\unit_2(x,y)
		\end{cases}
	\end{equation}
	with $(M_1,M_2)\in \left( \NN_{\geq 1} \right)^2$ and $(m_1,m_2)\in \NN^2\setminus \{(0,0)\}$, $g$ as above and $\unit_1$, $\unit_2$ are units.
\end{rem}
	 
\begin{proof}
	With above notation:
	\begin{itemize}
		\item Assume 
			\[
				\begin{cases}
					(P-cQ)(x,y)= x^{M_1}(y- \mu_1 x^{q_1}+g_1(x,y))^{m_1}\unit_1(x,y)\\
					Q(x,y)= x^{M_2}(y- \mu_2 x^{q_2}+g_2(x,y))^{m_2}\unit_2(x,y)
				\end{cases}.
			\]
			\begin{itemize}
				\item  	If $(\mu_1,q_1) \neq (\mu_2,q_2)$, then applying Lemma \ref{lem:Newton-alg}, and Newton maps $\tsigma=\tsigma_{(1,q_1,\mu_1)}$ or ${\tsigma=\tsigma_{(1,q_2,\mu_2)}}$, we are in the case (\ref{dim1dim1}) because $(P-cQ)_{\tsigma}$ or $Q_{\tsigma}$, has the form $x^M\unit(x,y)$ with $\unit$ a unit and $M\geq 1$ and the other has the form \[x^{M'}(y- \mu x^{q'}+g'(x,y))^{m'}\unit'(x,y)\] with $M'\in \NN^*$, $q' \in \NN^*$, $g'$ as above and $\unit'$ a unit.

				\item  	If $(\mu_1,q_1) = (\mu_2, q_2)=:(\mu,q)$ and $g_1 \neq g_2$, then the branches 
					\[h_1(x,y) := y- \mu x^q+g_1(x,y)\: \text{and}\: h_2(x,y)=y- \mu x^q+g_2(x,y)\]
					are different. Applying the Newton algorithm, there exists a composition of Newton transforms $\tsigma$, such that
					\[h_{1, \tsigma} = x^{L_1}(y-\mu'_{1} x^{q'_1} + g'_{1}(x,y))\:\text{and}\:
					  h_{2, \tsigma} = x^{L_2}(y-\mu'_{2} x^{q'_2} + g'_{2}(x,y))
				        \]
					with $(\mu'_{1},q'_1)\neq (\mu'_{2},q'_2)$. Then, we are back to the previous case.

				\item  	If $(\mu_1,q_1)=(\mu_2,q_2)=:(\mu,q)$, $g_1=g_2$ and $M_1=M_2=0$, then applying the Newton map $\tsigma:=\sigma_{(1,q,\mu)}$ to $P-cQ$ and $Q$, we get $(P-cQ)_{\tsigma}$ and $Q_{\tsigma}$ of case (\ref{dim1dim1}).
			\end{itemize}
		\item Assume 
			\[
				\begin{cases}
					(P-cQ)(x,y) = x^{M_1}y^{m_1}\unit_1(x,y)\\
					Q(x,y)= x^{M_2}(y- \mu_2 x^{q_2}+g_2(x,y))^{m_2}\unit_2(x,y)
				\end{cases}.
			\]
			Applying the Newton map $\tsigma=\tsigma_{(1,q_2,\mu_2)}$, we have
			\[
				\begin{cases}
					Q_{\tsigma} = x^{M'_2}(y- \mu'_2 x^{q'_2}+g'_2(x,y))^{m_2}\unit'_2(x,y) \\
					(P-cQ)_{\tsigma}=x^{M'_1}u'(x,y)= x^{M'_1}(y- \mu'_2 x^{q'_2}+g'_2(x,y))^{0}\unit'_1(x,y)
				\end{cases}
			\]
			with $\unit'_1$ and $\unit'_2$ units, $g'_2$ as above, $M'_1, M'_2 \in \NN_{\geq 1}$. Then we are back to the case (\ref{dim1dim1}).\medskip

		\item  	Similarly we treat the case
			\[
				\begin{cases}
					(P-cQ)(x,y)= x^{M_1}(y- \mu_1 x^{q_1}+g_1(x,y))^{m_1}\\
					Q(x,y)= x^{M_2}y^{m_2}\unit(x,y)
				\end{cases}.
			\]
	\end{itemize}
\end{proof}

\begin{example} \label{casdebase1} We consider the case (\ref{monmon}).
	By Theorem \ref{thmSf00c} and Proposition \ref{conedim2}:

	If $(M_1,M_2)\in (\NN_{\geq 1})^2$ then
	\begin{itemize}
		\item if $M_1 = M_2$ then $S_{f,(0,0),c}=0$.
		\item if $M_1 \neq M_2$ and $m_1 = m_2 =0$ then 
			\[S_{f,(0,0),c} = \eps_{M_1,M_2}[x^{M_1-M_2}\colon\Gm \to \Gm,\sigma_{\Gm}]\]
			with $\eps_{(M_1,M_2)}=1$ if $M_1> M_2$ and otherwise 0.
		\item if $M_1 \neq M_2$ and $m_1 = m_2 \neq 0$ then $S_{f,(0,0),c}=0$. 
		\item if $M_1 \neq M_2$ and $m_1 \neq m_2$ then 
			\[S_{f,(0,0),c} = \eps_{(M_1,m_1),(M_2,m_2)}[x^{M_1-M_2}y^{m_1-m_2}\colon\Gm^2 \to \Gm,\sigma_{\Gm}]\]
			with $\eps_{(M_1,m_1),(M_2,m_2)}=-1$ if $M_1>M_2$ and $m_1>m_2$ and otherwise 0.
	\end{itemize}
	
	If $(m_1,m_2)\in (\NN_{\geq 1})^2$ then
	\begin{itemize}
		\item if $m_1 = m_2$ then $S_{f,(0,0),c}=0$.
		\item if $m_1 \neq m_2$ and $M_1 = M_2 =0$ then 
			\[S_{f,(0,0),c} = \eps_{m_1,m_2}[y^{m_1-m_2}\colon\Gm \to \Gm,\sigma_{\Gm}]\]
			with $\eps_{(m_1,m_2)}=1$ if $m_1> m_2$ and otherwise 0.
		\item if $m_1 \neq m_2$ and $M_1 = M_2 \neq 0$ then $S_{f,(0,0),c}=0$.
		\item if $M_1 \neq M_2$ and $m_1 \neq m_2$ then 
			\[S_{f,(0,0),c} = \eps_{(M_1,m_1),(M_2,m_2)}[x^{M_1-M_2}y^{m_1-m_2}\colon\Gm^2 \to \Gm,\sigma_{\Gm^2}]\]
			with $\eps_{(M_1,m_1),(M_2,m_2)}=-1$ if $M_1>M_2$ and $m_1>m_2$ and otherwise 0.
	\end{itemize}

	If $M_1=m_1=0$ then $P-cQ = \unit_1(x,y)$, so $(P-cQ)(0,0)\neq 0$ contradiction.

	If $M_2=m_2=0$ then $Q = \unit_2(x,y)$, so $Q(0,0)\neq 0$ contradiction.

	If $M_2=m_1=0$ (the case $M_2=m_1=0$ is similar), then the motivic zeta function is 
	\[
		\big(Z_{\f,X}^{\delta}(T)\big)_{((0,0),c)} = 
		\sum_{(\alpha ,\beta)\in C^{\delta}\cap \NN^2}\mes(X_{\alpha,\beta})T^{M\alpha-m\beta}
	\]
	with
	\[
		C^{\delta}  =  \{(\alpha,\beta)\in (\RR_{>0})^2 \mid M\alpha > m\beta ,\: m\beta \leq \delta (M\alpha-m\beta)\}
		=  \{(\alpha,\beta)\in (\RR_{>0})^2 \mid (1+1/\delta) m\beta \leq M\alpha\}
	\]
	then $\chi_{c}(C^{\delta})=0$ and we conclude that $S_{f,(0,0),c}=0$.
\end{example}

\begin{example} \label{casdebase2} 
	We consider the case (\ref{dim1dim1}). By Theorem \ref{thmSf00c} we have
	\begin{equation}\label{equationcasdebase}
		\begin{array}{ccl} 
			S_{f,(0,0),c} & = & \eps_{(M_1+m_1q, M_2 + m_2q)}[x^{(M_1-M_2)+q(m_1-m_2)}\colon\Gm \to \Gm, \sigma_{\Gm}] \\ \\
			& - & \eps_{\C}[x^{M_1-M_2}y^{m_1-m_2}\colon\Gm^2 \to \Gm, \sigma_{\Gm^2}] \\ \\
			& - & \eps_{D}[x^{M_1-M_2}(y-\mu x^q)^{m_1-m_2}\colon \Gm^2 \setminus (y=\mu x^q) \to \Gm, \sigma_{\Gm^2}] \\ \\
			& - & \eps_{(=,=)} [x^{M_1-M_2}\xi^{m_1-m_2} \colon \Gm^2 \to \Gm, \sigma_{\Gm^2}]
		\end{array}
	\end{equation}
	with $\C = \RR_{>0}(1,0) + \RR_{>0}(1,q)$, $D = \RR_{>0}(1,q)$ and 
	\begin{itemize}
		\item[$\bullet$] $\eps_{(M_1+m_1q, M_2 + m_2q)} = 1$ if and only if $M_1+m_1q> M_2 + m_2q$, and otherwise 0.
		\item[$\bullet$] $\eps_{\C} = 1$ if and only if 
			\[((M_1-M_2 , m_1-m_2) \mid (1,0))>0\:\text{and}\: ((M_1-M_2 , m_1-m_2) \mid (1,q))>0,\]
			and otherwise 0.
		\item[$\bullet$] $\eps_{D} = -1$ if and only if $((M_1-M_2 , m_1-m_2) \mid (1,q))>0$, and otherwise 0.
		\item[$\bullet$] $\eps_{(=,=)} = 1$ if and only if $m_1>m_2$ and $M_1+m_1q> M_2 + m_2q$, and otherwise 0.
	\end{itemize}
\end{example}
\begin{rem}\label{caracteristiqueEulercasdebase2} Two particular cases of Example \ref{casdebase2} which usually occur (as in Example \ref{exemple}):
	\begin{enumerate}
		\item \label{eulercasspecifique} if $\eps_{(M_1+m_1q, M_2 + m_2q)} = 1$, $\eps_{\C} = 1$, $\eps_{D} = -1$ and $\eps_{(=,=)} = 1$, 
			by \cite[Proposition 2.4]{carai-infini} and additivity of the Euler characteristic $\tilde{\chi_{c}}$, 
			we have $\tilde{\chi_{c}}\left(S_{f,(0,0),c}^{(1)}\right)=0$.
		\item \label{casspecifique} if $M_2\geq M_1$, $M_1+q>M_2$ and $(m_1,m_2)=(1,0)$, then 
			\begin{multline*}		
				S_{f,(0,0),c}  =  [x^{(M_1-M_2)+q}\colon\Gm \to \Gm, \sigma_{\Gm}] 
				- [x^{M_1-M_2}(y-\mu x^q)\colon \Gm^2 \setminus (y=\mu x^q) \to \Gm, \sigma_{\Gm^2}] \\
				-  [x^{M_1-M_2}\xi \colon \Gm^2 \to \Gm, \sigma_{\Gm^2}] = 0.
			\end{multline*}
			Indeed, using the change of variables $(x,y)\mapsto (x,\xi=y-\mu x^q)$, we get the equality 
			\[
				[x^{M_1-M_2}(y-\mu x^q)\colon \Gm^2 \setminus (y=\mu x^q) \to \Gm, \sigma_{\Gm^2}]
				=
				[x^{M_1-M_2}\xi\colon \Gm^2 \setminus (\xi=-\mu x^q) \to \Gm, \sigma_{\Gm^2}]
			\]
			and obtain that $S_{f,(0,0),c}=0$. 
	\end{enumerate}
\end{rem}
\begin{proof}
	By the proof of Theorem \ref{thmSf00c}, we get the three first terms of formula (\ref{equationcasdebase}) and it is enough to prove that
	\[
		-  \lim_{T \to \infty} Z^{(=,=),\delta}(T) = - \eps_{(=,=)} [x^{M_1-M_2}\xi^{m_1-m_2} \colon \Gm^2 \to \Gm, \sigma_{\Gm}]
	\]
	with  
	\[ 
		Z^{(=,=),\delta}(T) = \sum_{n\geq 1}
		\big(
			\sum_{\underset{n=l_1-l_2}{((k,kq),(l_1,l_2)) \in C^{\delta}_{(=,=)} \cap (\NN_{\geq 1})^4}} 
		        \LL^{-(\nu-1)k}\mes\big( X_{(k,kq),(l_1,l_2),\mu}^{(=,=)}\big)
		\big) T^{n}
	\]
	with 
	\[
		X_{(k,kq),(l_1,l_2),\mu}^{(=,=)}=
		\left\{
			\begin{array}{c|l}
				(x(t),y(t))\in \L(\A^2_{\k}) & 
				\begin{array}{l}
					\Ord{x(t)}=k, \Ord{y(t)}=kq\\
					\ac{y(t)} = \mu \ac{x(t)}^q \\
					\Ord{Q(x(t),y(t))}=l_2\
					\Ord{(P-cQ)(x(t),y(t))}=l_1 
					\end{array}
				\end{array}
			\right\}
		\]
	and 
	\[
		C^{\delta}_{(=,=)} = \{((k,kq),(l_1,l_2))\in \RR^{2} \times \RR^{2} \mid  l_1 > k(M_1+m_1q),\ l_2 > k(M_2 + m_2 q),\ l_1> l_2,\ l_2 \leq (l_1-l_2)\delta \}.
	\]
	We consider the polynomial $h(x,y) = y-\mu x^q + g(x,y)$ and $\tilde{l}=\Ord h(x(t),y(t))$. 
	We have 
	\[ 
		Z^{(=,=),\delta}(T) = \sum_{n\geq 1}
		\big(
			\sum_{\underset{n=k(M_1-M_2)+\tilde{l}(m_1-m_2)}{(k,\tilde{l}) \in \tilde{C}^{\delta} \cap (\NN_{\geq 1})^2}} 
			\LL^{-(\nu-1)k}\mes\big( X_{(k,kq),(kM_1+m_1 \tilde{l},kM_2+m_2 \tilde{l})}^{(=,=)}\big)\:
		\big) T^{n}
	\]
	with 
	\[
			\tilde{C}^{\delta} = 
			\left\{ 
				\begin{array}{c|l}
				     (k,\tilde{l}) \in (\RR_{>0})^2 & 
				     \begin{array}{l}
					     \tilde{l}>kq,  k(M_1-M_2)+\tilde{l}(m_1-m_2)>0,\\
					     kM_2+m_2 \tilde{l} \leq ( k(M_1-M_2)+\tilde{l}(m_1-m_2)) \delta
				     \end{array}
				 \end{array}
		        \right\}.
	\]
	As the polynomial $h$ is Newton non-degenerate 
	(see \cite{Gui02a, GuiLoeMer05a} and \cite[\S 3.3.3, Example 3]{carai-antonio} for details) 
	we have for any $(k,\tilde{l}) \in \tilde{C}^{\delta}$ 
		\begin{multline*} 
			\mes\left( X_{(k,kq),(kM_1+m_1 \tilde{l},kM_2+m_2 \tilde{l}),\mu}^{(=,=)}\right)  =  
			\LL^{-k-\tilde{l}}\left[x^{M_1-M_2}\xi^{m_1-m_2} \colon (y=\mu x^q)\cap \Gm^2 \times \Gm \to \Gm, \sigma_{\Gm}\right] \\
			 =  \LL^{-k-\tilde{l}}\left[x^{M_1-M_2}\xi^{m_1-m_2} \colon \Gm^2 \to \Gm, \sigma_{\Gm}\right].
		\end{multline*}
	Then, we study the rationality and the limit of the formal series 
	\[S_{\tilde{C}^{\delta}}(T) = \sum_{(k,\tilde{l}) \in \tilde{C}^{\delta}} \LL^{-\nu k - \tilde{l}}T^{k(M_1-M_2)+\tilde{l}(m_1-m_2)}.\]
        
	Assume $m_1>m_2$. Then for $\delta$ large enough we have $(m_1-m_2)\delta > m_2$.
	\begin{itemize}
		\item If $\frac{M_2-M_1}{m_1-m_2} < q$ then, 
			as 
			\[
				\frac{M_2 - \delta(M_1-M_2)}{\delta(m_1 - m_2) - m_2} \underset{\delta \to \infty}{\to}  
				\frac{M_2-M_1}{m_1-m_2}
			\]
		there is $\delta_0>0$ such that for any $\delta>\delta_0$ 
			$\frac{M_2 - \delta(M_1-M_2)}{\delta(m_1 - m_2) - m_2} \leq q.$

			For any $(k,\tilde{l})$ with $\tilde{l}>kq$, we have 
			\[\tilde{l}>kq>\frac{M_2-M_1}{m_1-m_2}k\:\text{and}\:
				\tilde{l}> kq > k \frac{M_2 - \delta(M_1-M_2)}{\delta(m_1 - m_2) - m_2}
			\]
			then we conclude that
			\[\tilde{C}^{\delta} = \{(k,\tilde{l}) \in (\RR_{>0})^2 \mid \tilde{l}>kq\}\]
			which is an open cone of dimension 2
			and by Lemma \ref{lemmedescones} we have
			\[\lim\limits_{T \to \infty} S_{\tilde{C}^{\delta}}(T) = \chi_{c}(\tilde{C}^{\delta}) = (-1)^2 = 1.\]
		\item If $\frac{M_2-M_1}{m_1-m_2} \geq q$, then for $\delta$ large enough, we have
			\[\frac{M_2 - \delta(M_1-M_2)}{\delta(m_1 - m_2) - m_2} >  \frac{M_2-M_1}{m_1-m_2} \geq q.\]
			Then, for any $(k,\tilde{l})\in (\NN_{\geq 1})^2$, if 
			$\tilde{l} \geq k \frac{M_2 - \delta(M_1-M_2)}{\delta(m_1 - m_2) - m_2}$ then, 
			$\tilde{l}>kq$ and \[k(M_1-M_2)+\tilde{l}(m_1-m_2)>0\]
			and we conclude that
			\[
				\tilde{C}^{\delta} = 
				\{(k,\tilde{l}) \in (\RR_{>0})^2 \mid kM_2+m_2 \tilde{l} \leq ( k(M_1-M_2)+\tilde{l}(m_1-m_2)) \delta\}
			\]
			and by Lemma \ref{lemmedescones} we have
			\(\lim\limits_{T \to \infty} S_{\tilde{C}^{\delta}}(T) = \chi_{c}(\tilde{C}^{\delta}) = 0.\)
	\end{itemize}

	Assume $m_1 = m_ 2 \neq 0$. We have 
	\[
		\tilde{C}^{\delta} = 
		\{(k,\tilde{l}) \in (\RR_{>0})^2 \mid \tilde{l}>kq,\ k(M_1-M_2)>0, kM_2+m_2 \tilde{l} \leq (k(M_1-M_2)) \delta \}.
	\]
	\begin{itemize}
		\item If $M_1>M_2$ then there is $\delta_0>0$ such that for any $\delta > \delta_0$, 
			we have $\frac{\delta(M_1-M_2)-M_2}{m_2} \geq q$ and 
			$$\tilde{C}^{\delta} = 
			\left\{ \begin{array}{c|l} (k,\tilde{l}) \in (\RR_{>0})^2 & kq <  \tilde{l} \leq k\frac{(M_1-M_2)\delta-M_2}{m_2} \end{array} 
			\right\}.$$ 
			By Lemma \ref{lemmedescones} we conclude
			$\lim\limits_{T \to \infty} S_{\tilde{C}^{\delta}}(T) = \chi_{c}(\tilde{C}^{\delta}) = 0.$
		\item If $M_1 \leq M_2$ then $\tilde{C}^{\delta} = \emptyset$ then $\lim_{T \to \infty} S_{\tilde{C}^{\delta}}(T) = 0$.
	\end{itemize}

	Assume $m_1 < m_2$. If $M_1 \leq M_2$ then $\tilde{C}^{\delta} = \emptyset$ then $\lim_{T \to \infty} S_{\tilde{C}^{\delta}}(T) = 0$.
	If $M_1>M_2$ then 
			\begin{itemize}
				\item if $q \geq \frac{M_1 - M_2}{m_2 - m_1}$ then for any $(k,\tilde{l}) \in \tilde{C}^{\delta}$ we have $\frac{\tilde{l}}{k}>q$ and $k(M_1-M_2)+\tilde{l}(m_1-m_2)>0$ implying the contradiction
					\[\frac{M_1 - M_2}{m_2 - m_1}>\frac{\tilde{l}}{k}>q.\]
					Then, $\tilde{C}^{\delta} = \emptyset$ and 
					$\lim\limits_{T \to \infty} S_{\tilde{C}^{\delta}}(T) = 0$.
				\item if $q <\frac{M_1 - M_2}{m_2 - m_1}$, for any $(k,\tilde{l})\in (\RR_{>0})^2$ the conditions 
					$kq<\tilde{l}$ and $k(M_1-M_2)+\tilde{l}(m_1-m_2)>0$ are equivalent to 
					$kq<\tilde{l}<k\frac{M_1-M_2}{m_2-m_1}$.
					As for any $\delta>0$, we have 
					\[\frac{\delta(M_1-M_2)-M_2}{m_2-\delta(m_1-m_2)}<\frac{M_1-M_2}{m_2-m_1},\]
					we conclude that 
					\[
						\tilde{C}^{\delta} = 
						\left\{
							\begin{array}{c|l} 
								(k,\tilde{l}) \in (\RR_{>0})^2 & kq <  \tilde{l} \leq k\frac{(M_1-M_2)\delta-M_2}{\delta(m_2-m_1)+m_2}
							\end{array}
						\right\}.
					\]
					By Lemma \ref{lemmedescones} we have
					$\lim\limits_{T \to \infty} S_{\tilde{C}^{\delta}}(T) = \chi_{c}(\tilde{C}^{\delta}) = 0.$
			\end{itemize}
\end{proof}

\section*{Acknowledgement}
The first author is partially supported by the spanish grants: PID2020-114750GB-C31 and PID2020-114750GB-C32. The second author was partially supported by the ANR grant ANR-15-CE40-0008 (Défigéo). The authors would like to thank the referee for his comments.

\bibliographystyle{abbrv}
\bibliography{biblio_complete.bib}

\begin{thebibliography}{10}

\bibitem{ArtCasLue05a}
E.~{Artal Bartolo}, P.~Cassou-Nogu{{\`e}}s, I.~Luengo, and
  A.~Melle-Hern\'andez.
\newblock {Quasi-ordinary power series and their zeta functions}.
\newblock {\em Mem. Amer. Math. Soc.}, 178(841):vi+85, 2005.

\bibitem{BodPic07a}
A.~Bodin and A.~Pichon.
\newblock Meromorphic functions, bifurcation sets and fibred links.
\newblock {\em Math. Res. Lett.}, 14(3):413--422, 2007.

\bibitem{BodPicSea09a}
A.~Bodin, A.~Pichon, and J.~Seade.
\newblock Milnor fibrations of meromorphic functions.
\newblock {\em J. Lond. Math. Soc. (2)}, 80(2):311--325, 2009.

\bibitem{carai-antonio}
P.~Cassou-Nogu\`es and M.~Raibaut.
\newblock Newton transformations and the motivic {M}ilnor fiber of a plane
  curve.
\newblock In {\em Singularities, algebraic geometry, commutative algebra, and
  related topics}, pages 145--189. Springer, Cham, 2018.

\bibitem{carai-infini}
P.~Cassou-Nogu\`es and M.~Raibaut.
\newblock Newton transformations and motivic invariants at infinity of plane
  curves.
\newblock {\em Math. Z.}, 299(1-2):591--669, 2021.

\bibitem{CassouVeys13}
P.~Cassou-Nogu\`es and W.~Veys.
\newblock Newton trees for ideals in two variables and applications.
\newblock {\em Proc. Lond. Math. Soc. (3)}, 108(4):869--910, 2014.

\bibitem{Cassou-Nogues-Veys-15}
P.~Cassou-Nogu\`es and W.~Veys.
\newblock The {N}ewton tree: geometric interpretation and applications to the
  motivic zeta function and the log canonical threshold.
\newblock {\em Math. Proc. Cambridge Philos. Soc.}, 159(3):481--515, 2015.

\bibitem{CNS}
A.~Chambert-Loir, J.~Nicaise, and J.~Sebag.
\newblock {\em Motivic integration}, volume 325 of {\em Progress in
  Mathematics}.
\newblock Birkh\"{a}user/Springer, New York, 2018.

\bibitem{Cox}
D.~A. Cox, J.~B. Little, and H.~K. Schenck.
\newblock {\em Toric varieties}, volume 124 of {\em Graduate Studies in
  Mathematics}.
\newblock American Mathematical Society, Providence, RI, 2011.

\bibitem{Delgado-Maugendre}
F.~Delgado and H.~Maugendre.
\newblock Special fibres and critical locus for a pencil of plane curve
  singularities.
\newblock {\em Compositio Mathematica}, 136(1):69–87, 2003.

\bibitem{DenLoe98b}
J.~Denef and F.~Loeser.
\newblock {Motivic {I}gusa zeta functions}.
\newblock {\em J. Algebraic Geom.}, 7(3):505--537, 1998.

\bibitem{DenLoe99a}
J.~Denef and F.~Loeser.
\newblock {Germs of arcs on singular algebraic varieties and motivic
  integration}.
\newblock {\em Invent. Math.}, 135(1):201--232, 1999.

\bibitem{DenLoe01b}
J.~Denef and F.~Loeser.
\newblock {Geometry on arc spaces of algebraic varieties}.
\newblock In {\em {European {C}ongress of {M}athematics, {V}ol. {I}
  ({B}arcelona, 2000)}}, volume 201 of {\em {Progr. Math.}}, pages 327--348.
  Birkh{\"a}user, Basel, 2001.

\bibitem{DenLoe02a}
J.~Denef and F.~Loeser.
\newblock {Lefschetz numbers of iterates of the monodromy and truncated arcs}.
\newblock {\em Topology}, 41(5):1031--1040, 2002.

\bibitem{Fulton-toric}
W.~Fulton.
\newblock {\em Introduction to toric varieties}, volume 131 of {\em Annals of
  Mathematics Studies}.
\newblock Princeton University Press, Princeton, NJ, 1993.
\newblock The William H. Roever Lectures in Geometry.

\bibitem{GL}
M.~Gonz{\'a}lez~Villa and A.~Lemahieu.
\newblock The monodromy conjecture for plane meromorphic germs.
\newblock {\em Bull. Lond. Math. Soc.}, 46(3):441--453, 2014.

\bibitem{GVLM}
M.~Gonz{\'a}lez~Villa, A.~Libgober, and L.~Maxim.
\newblock Motivic infinite cyclic covers.
\newblock {\em Adv. Math.}, 298:413--447, 2016.

\bibitem{Gui02a}
G.~Guibert.
\newblock {Espaces d'arcs et invariants d'{A}lexander}.
\newblock {\em Comment. Math. Helv.}, 77(4):783--820, 2002.

\bibitem{GuiLoeMer05a}
G.~Guibert, F.~Loeser, and M.~Merle.
\newblock {Nearby cycles and composition with a nondegenerate polynomial}.
\newblock {\em International Mathematics Research Notices},
  2005(31):1873--1888, 01 2005.

\bibitem{GuiLoeMer06a}
G.~Guibert, F.~Loeser, and M.~Merle.
\newblock {Iterated vanishing cycles, convolution, and a motivic analogue of a
  conjecture of {S}teenbrink}.
\newblock {\em Duke Math. J.}, 132(3):409--457, 2006.

\bibitem{GuiLoeMer09b}
G.~Guibert, F.~Loeser, and M.~Merle.
\newblock {Composition with a two variable function}.
\newblock {\em Math. Res. Lett.}, 16(3):439--448, 2009.

\bibitem{GusLueMel98a}
S.~M. Gusein-Zade, I.~Luengo, and A.~Melle-Hern\'andez.
\newblock {Zeta functions of germs of meromorphic functions, and the {N}ewton
  diagram}.
\newblock {\em Funktsional. Anal. i Prilozhen.}, 32(2):26--35, 95, 1998.

\bibitem{GusLueMel01a}
S.~M. Gusein-Zade, I.~Luengo, and A.~Melle-Hern\'andez.
\newblock {Bifurcations and topology of meromorphic germs}.
\newblock In {\em {New developments in singularity theory ({C}ambridge,
  2000)}}, volume~21 of {\em {NATO Sci. Ser. II Math. Phys. Chem.}}, pages
  279--304. Kluwer Acad. Publ., Dordrecht, 2001.

\bibitem{Kon95a}
M.~Kontsevich.
\newblock {String cohomology}.
\newblock {\em Lecture at Orsay}, D{\'e}cembre 7, 1995.

\bibitem{Kou76a}
A.~G. Kouchnirenko.
\newblock {Poly{\`e}dres de {N}ewton et nombres de {M}ilnor}.
\newblock {\em Invent. Math.}, 32(1):1--31, 1976.

\bibitem{LeRamanujam}
D.~T. L{\^e} and C.~P. Ramanujam.
\newblock The invariance of {Milnor}'s number implies the invariance of the
  topological type.
\newblock {\em Am. J. Math.}, 98:67--78, 1976.

\bibitem{TraWeb97a}
D.~T. L{\^e} and C.~Weber.
\newblock \'{E}quisingularit{\'e} dans les pinceaux de germes de courbes planes
  et {$C^0$}-suffisance.
\newblock {\em Enseign. Math. (2)}, 43(3-4):355--380, 1997.

\bibitem{Loo02a}
E.~Looijenga.
\newblock Motivic measures.
\newblock In {\em S\'eminaire Bourbaki : volume 1999/2000, expos\'es 865-879},
  number 276 in Ast\'erisque, pages 267--297. Soci\'et\'e math\'ematique de
  France, 2002.
\newblock talk:874.

\bibitem{Nguyen}
T.~T. Nguyen.
\newblock On the topology of rational functions in two complex variables.
\newblock {\em Acta Math. Vietnam.}, 37(2):171--187, 2012.

\bibitem{NST}
T.~T. Nguyen, T.~Saito, and K.~Takeuchi.
\newblock The bifurcation set of a rational function via {Newton} polytopes.
\newblock {\em Math. Z.}, 298(1-2):899--916, 2021.

\bibitem{Nguyen-Takeuchi}
T.~T. Nguyen and K.~Takeuchi.
\newblock Meromorphic nearby cycle functors and monodromies of meromorphic
  functions (with appendix by {T}. {S}aito).
\newblock {\em Rev. Mat. Complut.}, 36(2):663--705, 2023.

\bibitem{Par99a}
A.~Parusi{{\'n}}ski.
\newblock Topological triviality of {$\mu$}-constant deformations of type
  {$f(x)+tg(x)$}.
\newblock {\em Bull. London Math. Soc.}, 31(6):686--692, 1999.

\bibitem{Rai11}
M.~Raibaut.
\newblock {Singularit{\'e}s {\`a} l'infini et int{\'e}gration motivique}.
\newblock {\em Bull. Soc. Math. France}, 140(1):51--100, 2012.

\bibitem{Raibaut-fractions}
M.~Raibaut.
\newblock Motivic {M}ilnor fibers of a rational function.
\newblock {\em Rev. Mat. Complut.}, 26(2):705--734, 2013.

\bibitem{Sai90a}
M.~Saito.
\newblock {Mixed {H}odge modules}.
\newblock {\em Publ. Res. Inst. Math. Sci.}, 26(2):221--333, 1990.

\bibitem{SieTiba}
D.~Siersma and M.~Tib\u{a}r.
\newblock On the vanishing cycles of a meromorphic function on the complement
  of its poles.
\newblock In {\em Real and complex singularities}, volume 354 of {\em Contemp.
  Math.}, pages 277--289. Amer. Math. Soc., Providence, RI, 2004.

\bibitem{Tiba}
M.~Tib\u{a}r.
\newblock Singularities and topology of meromorphic functions.
\newblock In {\em Trends in singularities}, Trends Math., pages 223--246.
  Birkh\"auser, Basel, 2002.

\bibitem{Veys01}
W.~Veys.
\newblock Zeta functions and ``{K}ontsevich invariants'' on singular varieties.
\newblock {\em Canad. J. Math.}, 53(4):834--865, 2001.

\bibitem{Vey06a}
W.~Veys.
\newblock {Arc spaces, motivic integration and stringy invariants}.
\newblock In {\em {Singularity theory and its applications}}, volume~43 of {\em
  {Adv. Stud. Pure Math.}}, pages 529--572. Math. Soc. Japan, Tokyo, 2006.

\bibitem{VZG}
W.~Veys and W.~A. Z{\'u}{\~n}iga-Galindo.
\newblock Zeta functions and oscillatory integrals for meromorphic functions.
\newblock {\em Adv. Math.}, 311:295--337, 2017.

\bibitem{Wall}
C.~T.~C. Wall.
\newblock {\em Singular points of plane curves}, volume~63 of {\em Lond. Math.
  Soc. Stud. Texts}.
\newblock Cambridge: Cambridge University Press, 2004.

\end{thebibliography}
\end{document}